\newcommand{\E}{\mbox{\bf E}}
\def\P{{\bf P}}
\def\ind{{\mathbf 1}}
\def\to{\rightarrow}
\def\d{\partial}
\def\bar{\overline}
\def\l{\left}
\def\<{\langle}
\def\>{\rangle}
\newcommand\mnote[1]{} %off
\newcommand{\beq}[1]{\begin{equation}\label{#1}}
\newcommand\eeq{\end{equation}}
\newcommand\ben{\begin{equation}}
\newcommand\een{\end{equation}}
\newcommand\bes{\begin{eqnarray*}}
\newcommand\ees{\end{eqnarray*}}
\newcommand\besn{\begin{eqnarray}}
\newcommand\eesn{\end{eqnarray}}
\newcommand\beal{\begin{align*}}
\newcommand\eaal{\end{align*}}
\def\bthm{\begin{theorem}}
\def\ethm{\end{theorem}}
\def\bdefn{\begin{definition}}
\def\edefn{\end{definition}}
\def\benu{\begin{enumerate}}
\def\eenu{\end{enumerate}}
\def\beit{\begin{itemize}}
\def\eeit{\end{itemize}}
\def\beds{\begin{description}}
\def\eeds{\end{description}}
\def\bepr{\begin{problem}}
\def\eepr{\end{problem}}
\def\bprf{\begin{proof}}
\def\eprf{\end{proof}}
\def\berk{\begin{remark}}
\def\eerk{\end{remark}}
\def\bex{\begin{exercise}}
\def\eex{\end{exercise}}
\def\beg{\begin{example}}
\def\eeg{\end{example}}
\def \CC{{\mathcal C}}
\def \DD{{\mathcal D}}
\def\BB{{\mathcal B}}
\def\AA{{\mathcal A}}
\def\LLL{{\mathcal L}}
\def\PP{{\mathcal P}}
\def\NN{{\mathcal N}}
\def\N{\mathbb{N}}
\def\R{\mathbb{R}}
\def\Z{\mathbb{Z}}
\newcommand{\sm}{{\raise0.3ex\hbox{$\scriptstyle \setminus$}}}
\renewcommand{\phi}{\varphi}
\def\CHI{\mathchoice%
{\raise2pt\hbox{$\chi$}}%
{\raise2pt\hbox{$\chi$}}%
{\raise1.3pt\hbox{$\scriptstyle\chi$}}%
{\raise0.8pt\hbox{$\scriptscriptstyle\chi$}}}
\def\smalloplus{\raise1pt\hbox{$\,\scriptstyle \oplus\;$}}
\def \sas {S$\alpha$S }
\def \1 {\mathbf{1}}
\def \l {\ell}
\DeclarePairedDelimiter\floor{\lfloor}{\rfloor}
\numberwithin{equation}{section}
\numberwithin{equation}{section}
\newcommand{\been}{\begin{enumerate}}
\newtheorem{thm}{Theorem}[section]
\newtheorem{lemma}[thm]{Lemma}
\newtheorem{cor}[thm]{Corollary}
\theoremstyle{remark}
\newtheorem{remark}[thm]{Remark}
\newtheorem{problem}[thm]{Problem}
\newtheorem{example}{Example}[section]
\theoremstyle{definition}
\def \sas {S$\alpha$S }
\def \ind {\mathbf{1}}
\def \l {\ell}
\def \Z{\mathbb{Z}}
\def \N{\mathbb{N}}
\def \E{\mathbb{E}}
\begin{document}

\begin{frontmatter}
\title{Stable Random Fields Indexed by Finitely Generated Free Groups%\thanksref{T1}
}
\runtitle{Stable Random Fields on Free Groups}
%\thankstext{T1}{Footnote to the title with the ``thankstext'' command.}

\begin{aug}
\author{\fnms{Sourav} \snm{Sarkar}\thanksref{t1,m1}\ead[label=e1]{souravs@berkeley.edu}
}
\and
\author{\fnms{Parthanil} \snm{Roy}\thanksref{t2,m2}
\ead[label=e2]{parthanil.roy@gmail.com}
}

\thankstext{t1}{Sourav Sarkar (corresponding author) was supported in part by Lo\`{e}ve Fellowship at University of California, Berkeley.}
\thankstext{t2}{Parthanil Roy was supported by Cumulative Professional Development Allowance from Government of India and the project RARE-318984 (a Marie Curie FP7 IRSES Fellowship).}
%\thankstext{t3}{Second supporter of the project}
\runauthor{S. Sarkar and P. Roy}

\affiliation{University of California, Berkeley\thanksmark{m1} and Indian Statistical Institute, Bangalore \thanksmark{m2}}

\address{Sourav Sarkar\\
Graduate Student\\
Department of Statistics\\
University of California, Berkeley\\
337 Evans Hall\\
Berkeley, CA 94720-3860\\
USA.\\
\printead{e1}
}

\address{Parthanil Roy\\
Associate Professor\\
Theoretical Statistics and Mathematics Unit\\
Indian Statistical Institute, Bangalore Centre\\
8th Mile, Mysore Road\\
Bangalore 560059\\
India.\\
\printead{e2}
}
\end{aug}

\begin{abstract}
In this work, we investigate the extremal behaviour of left-stationary \emph{symmetric $\alpha$-stable} (S$\alpha$S) random fields indexed by finitely generated free groups. We begin by studying the rate of growth of a sequence of partial maxima obtained by varying the indexing parameter of the field over balls of increasing size. This leads to a phase-transition that depends on the ergodic properties of the underlying nonsingular action of the free group but is different from what happens in the case of S$\alpha$S random fields indexed by $\mathbb{Z}^d$. The presence of this new dichotomy is confirmed by the study of a stable random field induced by the canonical action of the free group on its Furstenberg-Poisson boundary with the measure being Patterson-Sullivan. This field is generated by a conservative action but its maxima sequence grows as fast as the i.i.d.~case contrary to what happens in the case of ${\Z}^d$. When the action of the free group is dissipative, we also establish that the scaled extremal point process sequence converges weakly to a novel class of point processes that we have termed as \emph{randomly thinned cluster Poisson processes}. This limit too is very different from that in the case of a lattice.

\end{abstract}

\begin{keyword}[class=MSC]
\kwd[Primary ]{60G52, 60G60}
%\kwd{60K35}
\kwd[; secondary ]{60G55, 37A40, 20E05.}
\end{keyword}

\begin{keyword}
\kwd{Stable, random field, extreme value theory, point process, nonsingular group action, free group, boundary action.}
\end{keyword}

\end{frontmatter}

%       \title{}
%       \subjclass[2010]{Primary  ; Secondary }
%       \keywords{}
%       \author{Sourav Sarkar}
%       \address{}
%       \email{}
%       \thanks{}
%       \author{Parthanil Roy}
%       \address{}
%       \email{}
%       \thanks{}
%        \maketitle

%\begin{abstract}

%\end{abstract}

\section{Introduction} \label{sec:intro}
A random variable $X$ is said to follow symmetric $\alpha$-stable (S$\alpha$S) distribution
($\alpha \in (0, 2]$, the index of stability) with scale parameter
$\sigma > 0$ if it has characteristic function of the form
$E(e^{i\theta X}) = \exp{\{-\sigma^{\alpha}|\theta|^\alpha\}}$, $\theta \in \mathbb{R}$.  In this work, we will always concentrate on the non-Gaussian case, i.e., $\alpha \in (0,2)$. For encyclopedic treatment of $\alpha$-stable ($0<\alpha <2$) distributions and processes, we refer the readers to \cite{samorodnitsky:taqqu:1994}. A random field $\mathbf{X}=\{X_t\}_{t\in G}$, indexed by a (possibly noncommutative) countable group $(G,\cdot)$ is called an \sas random field if for each $k \geq 1$, for each $t_1, t_2, \ldots, t_k \in G$ and for each $c_1, c_2, \ldots, c_k \in \mathbb{R}$, the linear combination $\sum_{i=1}^k c_i X_{t_i}$  follows an \sas distribution. Also $\{X_t\}_{t \in G}$ is called left-stationary, if $\{X_t\} \overset{d}{=} \{X_{s \cdot t}\} $ for all $s\in G$. The notion of right-stationarity can be defined analogously and will coincide with left-stationarity when $G$ is abelian. Whatever we prove for left-stationary \sas random fields will have their corresponding counterparts in the right-stationary case. From now on, we shall write stationary to mean left-stationary throughout this paper.

Thanks to the seminal works of Rosi{\'n}ski \cite{Rosinski:1994}, \cite{Rosinski:1995}, \cite{Rosinski:2000}, various probabilistic aspects of stationary \sas random fields indexed by $\mathbb{Z}$ or $\mathbb{Z}^d$ have been connected to the ergodic theoretic properties of the underlying nonsingular group action; see, for example, \cite{rosinski:samorodnitsky:1996}, \cite{mikosch:samorodnitsky:2000a}, \cite{samorodnitsky:2004a}, \cite{Resnick:Samorodnitsky:2004},   \cite{samorodnitsky:2005a}, \cite{cohen:samorodnitsky:2006}, \cite{roy:samorodnitsky:2008}, \cite{Roy:2010}, \cite{Wang:Yizao:Roy:2013}, \cite{owada:samorodnitsky:2015a}, \cite{fasen:roy:2016}. For similar connections in case of max-stable processes and fields, we refer the readers to \cite{stoev:taqqu:2005}, \cite{stoev:2008}, \cite{kabluchko:2009}, \cite{kabluchko:schlather:dehaan:2009}, \cite{kabluchko:schlather:2010}, \cite{wang:stoev:2010a}, \cite{wang:stoev:2010b}, \cite{dombry:kabluchko:2014}, \cite{dombry:kabluchko:2016}. See also \cite{roy:2007a}, \cite{owada:samorodnitsky:2015b}, \cite{jung:owada:samorodnitsky:2015}, \cite{kabluchko:stoev:2016} for links between ergodic theory and stationary infinitely divisible processes, and \cite{roy:2012} for an alternative approach to stable processes using Maharam systems.

In all the works mentioned above, the indexing group $G$ is $\mathbb{Z}^d$ (or $\mathbb{R}^d$ in the continuous parameter case) for some $d \geq 1$ and hence amenable. Many of the proofs use the amenability of the underlying group in some way or the other. In the present work, we would like to go beyond the framework of amenable groups and study the corresponding stable random fields. To this end, we first establish a general phase transition result (see Theorem~\ref{decomp} below) for extremes of stable fields indexed by finitely generated countable groups, and then concentrate on the simplest possible class of non-amenable groups, namely, the finitely generated free groups. We use nonsingular actions of free groups to construct stationary S$\alpha$S random fields in parallel to \cite{Rosinski:1995, Rosinski:2000} and investigate the extremal properties of such fields in details under various ergodic theoretic conditions on the action.

The motivation behind our work is twofold. Firstly, ergodic theoretic properties of group actions may change significantly as we pass from amenable to non-amenable groups; (see, for instance, \cite{Tao:2015} for a recent article which shows that the pointwise and maximal ergodic theorems do not hold in $L^1$ for measure-preserving actions of finitely generated free groups). This necessitates the investigation of the effect of the ergodic theoretic change on various probabilistic aspects of the stable fields and finitely generated free groups serve as a convenient test-case in the class of non-amenable groups. Keeping this broader goal in mind, we focus on extreme value theoretic properties of stationary S$\alpha$S random fields indexed by such groups.

The second motivation comes from the very simple observation that by passing to the Cayley graph of the underlying free group, we obtain a stationary stable random field indexed by a regular tree of even degree. This, of course, is an important object to study (see, for example, \cite{pemantle:1995} for a survey on stochastic processes indexed by trees and their importance in probability theory, statistical physics, fractal geometry, branching models, etc.). To our knowledge, the only family of tree-indexed processes with stable (or even heavy tailed) marginals was introduced by \cite{durrett:1979, durrett:1983} in the form of branching random walks (see also \cite{kyprianou:1999}, \cite{gantert:2000}, and the more recent works of \cite{berard:maillard:2014}, \cite{maillard:2016}, \cite{lalley:shao:2016}), \cite{bhattacharya:hazra:roy:2016a}, \cite{bhattacharya:hazra:roy:2016b}. However, the branching random walks are, by design, highly nonstationary.  In particular, no stationary stable random field has been constructed on a tree so far and our work can perhaps fill in this gap.

An important manifestation of non-amenability of free groups is that the usual ball and its interior boundary are ``asymptotically proportional'' in size. As a result, compared to the case $G=\mathbb{Z}^d$, we indeed observe a different extremal behaviour of $\{X_t\}_{t \in G}$ when $G$ happens to be a finitely generated free group. In \cite{samorodnitsky:2004a, roy:samorodnitsky:2008}, it was shown that a maxima sequence of $\{X_t\}_{t \in \mathbb{Z}^d}$ (obtained by varying $t$ in $d$-dimensional cubes of increasing size) grows faster as we pass from a conservative to a non-conservative $\mathbb{Z}^d$-action in its integral representation. In case of finitely generated free groups, we have observed a phase transition behaviour of a similar maxima sequence and the transition boundary is a different one. In order to confirm the presence of a new dichotomy, we study a class of stable random fields generated by the canonical action of the free group on its Furstenberg-Poisson boundary with the measure being Patterson-Sullivan; see Example~\ref{eg2} below. Even though this nonsingular action is conservative, the maxima of these fields grow as fast as the maxima in the dissipative case.

For stationary \sas random fields generated by dissipative actions of the free group, the corresponding extremal point process has been shown to converge weakly (in the space of Radon point measures on $[-\infty, \infty] \setminus \{0\}$ equipped with the vague topology) to a new kind of point process that we have termed \emph{randomly thinned cluster Poisson process}. This limit too is much more sophisticated compared to the corresponding one in the case $G = \mathbb{Z}^d$ (see \cite{Resnick:Samorodnitsky:2004, Roy:2010}), where a simple cluster Poisson limit was obtained with no thinning. The presence of thinning in our framework can be explained by the nontrivial contributions of the points coming from the boundary of a ball and hence is clearly a ``non-amenable phenomenon''. The asymptotic behaviour of the maxima can easily be read off from the weak convergence of the point process and not surprisingly, the constant term in this limit is much more delicate than the one in the lattice case.

We would like to mention here that the proofs of the main results of this paper are not at all straightforward. The proof of Theorem~\ref{decomp}, for example, relies on the use of ergodic theoretic machineries including \emph{Maharam extension} (see \cite{maharam:1964}) and \emph{measurable union of a hereditary collection} (see \cite{aaronson:1997}), and a combinatorial tool from geometric group theory. On the other hand, the argument used in proving Theorem~\ref{maintheorem} is more probabilistic (and to some extent analytic) in nature. Due to the non-amenability of free groups, even to establish that the limiting point process is Radon, we need to give a sharp bound on an expected value based on exact counting of vertices (see Lemma~\ref{lemma:vertex_counting}) that are a specified distance away from the root and in a certain subgraph of the Cayley tree.

The paper is organized as follows. Section~\ref{BG} is devoted to background information on \sas random fields and their relations to the ergodic theoretic properties of the underlying group actions. In Section \ref{PM}, we present our results on the rate of growth of partial maxima for stationary \sas random fields indexed by general finitely generated countable groups, and in particular by finitely generated free groups. Section \ref{Disscase} deals with the weak convergence of point processes associated with stable fields generated by dissipative actions of finitely generated free groups. The results in Sections~\ref{PM} and~\ref{Disscase} are proved  in Sections~\ref{CP} and~\ref{DP}, respectively.

The following notations are going to be used throughout this paper. For two sequences of positive real numbers $\{a_n\}$ and $\{b_n\}$, the
notation $a_n \sim b_n$ will mean $a_n/b_n \to 1$ as $n \to \infty$. On the other hand, for two $\sigma$-finite measures $m_1$ and $m_2$ defined on the the same measurable space, $m_1 \sim m_2$ will signify that the measures are equivalent. For any $\sigma$-finite measure space $(S,\mathcal{S}, m)$, we define the function space $\mathcal{L}^{\alpha}(S,m):=\left\{f:S\to\mathbb{R} \mbox{ measurable}: \|f\|_\alpha <\infty \right\}$, where
\[\|f\|_\alpha:=\left(\int_S|f(s)|^{\alpha}\,m(ds)\right)^{1/\alpha}.\]
For two random variables $X$, $Y$ (not necessarily defined on the same probability space), the notation $X\stackrel{\text{d}}{=}Y$ indicates that $X$ and $Y$ are identically distributed. For two random fields $\{X_t\}_{t \in G}$ and $\{Y_t\}_{t \in G}$, we write $X_t\stackrel{\text{fdd}}{=}Y_t$, $t \in G$ to mean that they have the same finite-dimensional distributions.

\section{Background} \label{BG}

%Recall that, M is called an \sas random measure on some measurable space $(S, \mathcal{S})$ with control measure $m$ if for any measurable subset $F \subset S$, $M(F)$ has an $S\alpha S$ distribution with scale parameter $(m(F))^{1/\alpha}$; and $M$ is independently scattered, meaning that for any $k\geq 1$, if $F_1,F_2,\ldots,F_k$ are mutually disjoint measurable subsets of $E$, then $M(F_i)$'s are independent random variables. Further,if $F_1,F_2,\ldots$ are disjoint measurable subsets of $S$, such that $\cup F_i$ is also in $\mathcal{S}$, then $$M(\cup_{j=1}^\infty F_j)= \sum_{j=1}^\infty M(F_j) \mbox{ a.s.}.$$ Here the null set may depend on the sequence $\{F_j\}$.

Let $(G,\cdot)$ be a countable group (which will be a finitely generated free group in most cases) with identity element $e$ and $(S, \mathcal{S}, m)$ be a $\sigma$-finite measure space. A collection of measurable maps $\phi_t : S \rightarrow S$ indexed by $t \in G$ is
called a group action of $G$ on the measurable space $(S, \mathcal{S})$ if
\begin{itemize}
\item[1)] $\phi_e$ is the identity map on $S$, and
\item[2)] $\phi_{u\cdot v} = \phi_v  \circ \phi_u$ for all $u, v \in G$.
\end{itemize}
Note that the order in which the two maps $\phi_v$ and $\phi_u$ appear in the above definition is important because $G$ is mostly going to be a noncommutative free group in this work. A group action $\{\phi_t \}_{t\in G}$ of $G$ on $S$ is called nonsingular if $m \circ \phi_t \sim m$ for all $t \in G$. Here $\sim$ denotes equivalence of measures.

Let $\mathbf{X}=\{X_t\}_{t \in G}$ be an \sas ($0< \alpha <2$) random field indexed by $G$. Any such random field has an integral representation of the type
   \begin{equation}\label{integrep}
   X_t \overset{\text{fdd}}{=} \int_S f_t(s)M(ds), \mbox{ \ \  } t \in G,
   \end{equation}
where $M$ is an \sas random measure on some standard Borel space $(S,\mathcal{S})$ with $\sigma$-finite control measure $m$, and $f_t \in \mathcal{L}^\alpha (m)$ for all $t \in G$. See, for instance, Theorem 13.1.2 of Samorodnitsky and Taqqu (1994) \cite{samorodnitsky:taqqu:1994}. One can assume, without loss of generality, that the union $\bigcup_{t \in G} \mbox{Support}(f_t)$ of the supports of $f_t$ is equal to $S$.

If further $\{X_t\}_{t \in G}$ is stationary, then one can show, following an argument of Rosi{\'n}ski (see \cite{Rosinski:1994}, \cite{Rosinski:1995}, \cite{Rosinski:2000}), that there always exists an integral representation of the following special form
 \begin{equation}\label{eq1}
 f_t(s)=c_t(s)\left(\frac{d\  m\circ\phi_t}{d\ m}(s)\right)^{1/\alpha} f\circ \phi_t(s), \mbox{\ \ } t \in G,
 \end{equation}
 where $f\in \mathcal{L}^{\alpha}(S,m)$, $\{\phi_t\}_{t \in G}$ is a nonsingular $G$-action on $S$, and
$\{c_t\}_{t\in G}$ is a measurable cocycle for $\{\phi_t\}$ taking values in $\{-1,+1\}$ (i.e.,
each $c_t$ is a measurable map $c_t : S \to \{-1,+1\}$ such that for all $t_1, t_2 \in G$, $c_{t_1\cdot t_2}(s) = c_{t_2} (s)c_{t_1}(\phi_{t_2}(s))$ for $m$-almost all $s \in S$). One says that a stationary \sas random field $\{X_t\}_{t\in G}$ is generated by a nonsingular $G$-action $\{\phi_t\}$ if it has an integral representation of the form \eqref{eq1}.

A  measurable set $W \in \mathcal{S}$ is called a wandering set for the action $\{\phi_t \}_{t\in G}$ if $\{\phi_t(W) : t \in G\}$ is a
pairwise disjoint collection. It is a well-known result (see, for example, \cite{aaronson:1997} and \cite{krengel:1985}) that $S = \mathcal{C} \cup \mathcal{D}$, where $\mathcal{C}$ and $\mathcal{D}$ are disjoint and $\{\phi_t\}$-invariant measurable sets
such that
\begin{itemize}
\item[1)] $\mathcal{D}=\cup_{t\in G}\phi_t(W^*)$ for some wandering set $W^*$,
\item[2)] $\mathcal{C}$ has no wandering subset of positive measure.
\end{itemize}
This decomposition of $S$ into two invariant parts is known as the Hopf decomposition. $\mathcal{D}$ is called the \textit{dissipative} part, and $\mathcal{C}$ the \textit{conservative} part of the action, and the corresponding action $\{\phi_t\}$ is called conservative if $S=\mathcal{C}$ and dissipative if $S=\mathcal{D}$.

Another important decomposition is the Neveu decomposition (see, for example, \cite{aaronson:1997}) of $S$ into the \textit{positive} and \textit{null} parts of the nonsingular action as described below. Following Lemma~2.2 and Theorem 2.3~(i) in \cite{Wang:Yizao:Roy:2013} (the arguments in the proof apply to all countable groups, not just $\mathbb{Z}^d$) we decompose $S=\PP\cup \NN$ into two $\{\phi_t\}$-invariant sets $\PP$ (positive part) and $\NN$ (null part), where the set $\PP$ is the largest (modulo $m$) set where one can have a finite measure equivalent to $m$ that is preserved by $\{\phi_t\}$, and $\NN$ is the complement of $\PP$. Obviously $\mathcal{P}\subseteq \mathcal{C}$ because a nontrivial wandering set will never allow a finite invariant measure equivalent to $m$. A measurable subset $B \subseteq S$ is called weakly wandering if there is a countably infinite subset $\{t_n: n \in \N\} \subseteq G$ such that $\phi_{t_n}(B)$ are all disjoint. Clearly the positive part $\PP$ has no weakly wandering set of positive measure.

 Following the notations used in \cite{Rosinski:1995} and \cite{Rosinski:2000}, it is easy to obtain the following unique in law decomposition of the random field $\{X_t\}_{t \in G}$ as
$$ X_t \overset{\text{fdd}}{=} \int_\mathcal{C} f_t(s)M(ds) + \int_\mathcal{D} f_t(s) M(ds) =: X_t^{\mathcal{C}} +X_t ^{\mathcal{D}}, \mbox{\ \ } t \in G$$
into a sum of two independent random fields $X_t^{\mathcal{C}}$ and $X_t ^{\mathcal{D}}$, generated by conservative and dissipative $G$-actions, respectively. Note that, following the same proof as that of Proposition 3.1 in \cite{roy:samorodnitsky:2008}, if a stationary \sas random field $\{X_t\}_{t \in G}$ is generated by a conservative
(dissipative, resp.) $G$-action, then in any other integral representation of $\{X_t\}$ the $G$-action must be conservative (dissipative,
resp.).

Roughly speaking, stable random fields generated by conservative actions tend to have longer memory simply because a conservative action ``keeps coming back". For $G = \mathbb{Z}^d$, this was made precise by studying the rate of growth of partial maxima and limits of sequences of scaled point processes in \cite{samorodnitsky:2004a}, \cite{Resnick:Samorodnitsky:2004}, \cite{roy:samorodnitsky:2008} and \cite{Roy:2010}. We review their results here. Let $\{X_t\}_{t \in \mathbb{Z}^d}$ be a stationary \sas random field and $M_n:= \max\limits_{\|t\|_\infty\leq n}|X_t|$ for $n \geq 1$ with $\|\cdot\|_\infty$ being the $L^\infty$-norm. Then as $n \rightarrow \infty$,
\begin{equation}
n^{-d/\alpha} M_n \Rightarrow \left\{
                                     \begin{array}{ll}
                                     \mathfrak{C}_\alpha^{1/\alpha} \kappa_X  Z_\alpha & \mbox{ if $\{\phi_t\}_{t \in \mathbb{Z}^d}$ is not conservative,} \\
                                     0              & \mbox{ if $\{\phi_t\}_{t \in \mathbb{Z}^d}$ is conservative.}
                                     \end{array}
                              \right. \nonumber %\label{conv_of_M_n}
\end{equation}
Here
\begin{equation}\label{sttail}
 \mathfrak{C}_\alpha =\left(\int_0^\infty x^{-\alpha}\sin x dx \right)^{-1}
      = \left\{
      \begin{array}{ll}
      \frac{1-\alpha}{\Gamma(2-\alpha)\cos(\pi \alpha/2)} & \mbox{if $\alpha \ne 1, $}\\
      \frac{2}{\pi}                                        & \mbox{if $\alpha =1$,}
      \end{array}
      \right.
\end{equation}
$Z_\alpha$ is a standard Frech\'{e}t type extreme value random variable with distribution function
\begin{equation}
P(Z_\alpha \leq x)=e^{-x^{-\alpha}},\;\,x > 0,  \label{Frechet}
\end{equation}
and $\kappa_X$ is a positive constant depending only on the random field $\{X_t\}_{t \in \mathbb{Z}^d}$. In other words, if $\{X_t\}_{t \in \mathbb{Z}^d}$ is generated by a conservative action, then the maxima sequence $M_n$ grows at a slower rate because longer memory prohibits sudden changes in $X_t$ even when $\|t\|_\infty$ is large.

The following result on weak convergence of a sequence of scaled point processes associated with stationary \sas random fields on $\mathbb{Z}^d$ generated by dissipative action is from \cite{Resnick:Samorodnitsky:2004} ($d=1$ case) and \cite{Roy:2010} ($d>1$ case). Assume now that $\{X_t\}_{t \in \mathbb{Z}^d}$ is generated by a dissipative $\mathbb{Z}^d$-action. In this case, we can assume without loss of generality that $\{X_t\}_{t \in \mathbb{Z}^d}$ has the following mixed moving average representation (in the sense of \cite{surgailis:rosinski:mandrekar:cambanis:1993}):
\begin{eqnarray} \label {ma}
X_t \overset{\text{fdd}}{=} \int_{W\times \mathbb{Z}^d}f(w, s-t)M(dw,ds), \;\; t\in \mathbb{Z}^d,
\end{eqnarray}
where $f \in \mathcal{L}^\alpha(W\times \mathbb{Z}^d,\nu\otimes \zeta)$, $\zeta$ is the counting measure on $\mathbb{Z}^d$, $\nu$ is some $\sigma$-finite measure on the standard Borel space $(W,\mathcal{W})$, and $M$ is a \sas random measure on $W\times \mathbb{Z}^d$ with control measure $\nu\otimes \zeta$; see \cite{Rosinski:2000} and \cite{roy:samorodnitsky:2008} for details.

Suppose $\nu_\alpha$ is the symmetric measure on $[-\infty,\infty] \setminus \{0\}$ such that $\nu_\alpha(x,\infty] = \nu_\alpha [-\infty,-x) = x^{-\alpha}$ for all $x > 0$. Let
\begin{equation}\label{PRM}
\sum_{i}\delta_{(j_i,w_i,u_i)} \sim \mbox{PRM}(\nu_\alpha \otimes \nu \otimes \zeta)
\end{equation}
be a Poisson random measure on $\big([-\infty,\infty] \setminus \{0\}\big)\times W \times \mathbb{Z}^d$ with mean measure $\nu_\alpha \otimes \nu \otimes \zeta$. Then $\{X_t\}_{t \in \mathbb{Z}^d}$ in \eqref{ma} has the following series representation (ignoring a factor of $\mathfrak{C}_\alpha^{1/\alpha}$):
\begin{equation}\label{series}
 X_t \overset{\text{fdd}}=  \sum_i j_if(w_i,u_i-t), \quad t \in \mathbb{Z}^d,
\end{equation}
It was shown in \cite{Resnick:Samorodnitsky:2004} and \cite{Roy:2010} that in the space $\mathcal{M}$ of Radon measures on $[-\infty,\infty] \setminus \{0\}$ (endowed with vague topology),
$$\sum_{\|t\|_\infty \leq n} \delta_{(2n)^{-d/\alpha}X_t}\Rightarrow \tilde{N}_*,$$
which is a cluster Poisson random measure with representation
\begin{equation}{\label{NstarZ}}
  \tilde{N}_*  =  \sum_{i=1}^\infty \sum_{t \in \mathbb{Z}^d}\delta_{j_if(w_i, t)}
      \mathbf{1}_{(u_i=\mathbf{0})},
\end{equation}
where $j_i, w_i, u_i$ are as in \eqref{PRM}. The Laplace functional of the above $\tilde{N}_*$ is
$$\E\big(e^{-\tilde{N}_*(g)}\big)=\exp\left\{-\int_W \int_{|x|>0}\bigg(1-\exp{\bigg\{-\sum_{t\in \mathbb{Z}^d}g(x(f(w, t)))\bigg\}}\bigg)\nu_\alpha(dx)\nu(dw)\right\},$$
for all measurable $g: [-\infty,\infty] \setminus \{0\} \to [0, \infty)$. Here $\tilde{N}_*(g)$ denotes the random variable obtained by integrating $g$ with respect to the random measure $\tilde{N}_*$. Note that in the representation of the cluster Poisson random measure $N_*$ given in Theorem 3.1 in \cite{Roy:2010}, the term $\mathbf{1}_{(u_i=\mathbf{0})}$ was missing even though the computation of the limiting Laplace functional was correct. A similar comment applies to Theorem 3.1 of \cite{Resnick:Samorodnitsky:2004}.

%By considering $$\psi'(x,v,u)=\sum_{k\in \Z^d}g(xf(v,k))\mathbf{1}_{(u=\mathbf{0})},$$
%and
%$$\psi(x,v)=\sum_{k\in \Z^d}g(xf(v,k))$$ we get, for $g$ nonnegative continuous with compact support,
%\begin{eqnarray*}
%\E[e^{-N_*(g)}]\\
%&=& \E e^{-\sum\limits_\l\sum\limits_{k\in G}g(j_\l f(v_\l,k))\mathbf{1}_{(u_\l=\mathbf{0})}}\\
%&=& \E e^{-N(\psi')}\\
%&=&\exp\left\{-\iint\sum_{u\in \Z^d}\left(1-e^{-\psi(x,v)\mathbf{1}_{(u=\mathbf{0})}}\right)\nu_\alpha(dx)\nu(dv)\right\}\\
%&=&\exp\left\{-\iint\left(1-e^{-\sum\limits_{k\in \Z^d}g(xf(v,k)}\right)\nu_\alpha(dx)\nu(dv)\right\}
%\end{eqnarray*}

Recall that for any finitely generated countable group $G$ with a symmetric (w.r.t. taking inverses) generating set $D$ not containing the identity element $e$, the Cayley graph $(V, E)$ consists of the vertex set $V=G$ and edge set $E=\{(u,v): u^{-1}v \in D\}$. Clearly, symmetry of $D$ turns this into an undirected graph and $e \notin D$ implies there is no self-loop. In this paper, we shall use the language of Cayley graphs to investigate the asymptotic behaviours of a sequence of partial maxima and a sequence of point processes associated with the stationary \sas random fields indexed by finitely generated free groups. In most of the discussions below, $G$ will denote a free group of finite rank $d \geq 2$ (except in Theorem \ref{decomp}, where $G$ will simply be a general finitely generated countable group) with the generating set $D=\{a_1, a_1^{-1}, a_2, a_2^{-1}, \ldots, a_d, a_d^{-1}\}$ being the collection of $d$ independent symbols and their inverses. This group consists of all reduced words formed out of the symbols in $D$ with the operation being ``concatenation followed by reduction'' and its Cayley graph is a $2d$-regular tree. See, for example, \cite{aluffi:2009} for details on free groups and Cayley graphs.

For any $t\in G$, we define $|t|$ to be the graph distance of $t$ from the root $e$ in the Cayley graph of the group $G$, i.e.,
$|t|=d(v,e)$, where $d(a,b)$ denotes the graph distance between vertices $a$ and $b$ in the Cayley graph of $G$.
Also
\begin{equation}\label{ballboundary}
\begin{array}{rl}
E_n&:=\{t\in G: |t| \leq n\}, \mbox{\ \ \ and} \\
C_n&:=\{t \in G: |t|=n\}
\end{array}
\end{equation}
denote the ball of radius $n$ and its interior boundary, respectively. When $G$ is a free group of finite rank $d \geq 2$, an easy counting yields that
\[|E_n|=1+\frac{d}{d-1}\left[(2d-1)^n-1\right] =\Theta((2d-1)^n)\]
and $|C_n|= (2d)(2d-1)^{n-1}$  for all $n \geq 1$. In particular, $|C_n|$ is ``asymptotically proportional'' to $|E_n|$, which is a manifestation of non-amenability. As a result, the extreme values of stable random fields indexed by finitely generated free groups are affected by the significant contributions from the interior boundary of $E_n$. This will become clear in Sections~$\ref{PM}$ and $\ref{Disscase}$ below.

In the next section, we shall study the asymptotic behaviour of the partial maxima sequence of
\begin{equation}
M_n=\max _{t\in E_n}|X_t|, \; n \geq 1 \label{def_M_n_general}
\end{equation}
of the stationary \sas random field $\{X_t\}_{t \in G}$ obtained by restricting the field to the ball $E_n$. As we shall see, there will be a phase transition as long as $G$ is a finitely generated countable group. Of course, for $G=\mathbb{Z}^d$, the phase transition boundary has to coincide with the Hopf boundary. However, when $G$ is a free group of finite rank $d \geq 2$, non-amenability of the group will induce a new transition boundary that lies strictly between the Hopf and Neveu boundaries.

\section{Rate of growth of partial maxima}\label{PM}

Let $G$ be a countable group generated by a finite symmetric set $D$ and $\{X_t\}_{t\in G}$ be a stationary \sas random field having an integral representation of the form \eqref{integrep}, where $f_t$ is given by \eqref{eq1}. We shall eventually specialize to the case when $G$ is a free group of finite rank $d \geq 2$ and investigate the extreme value theory of the field. Define $E_n$ and $C_n$ as in \eqref{ballboundary} and the partial maxima sequence $M_n$ by \eqref{def_M_n_general}. We define a deterministic sequence
\begin{equation}\label{definitionofb_n}
b_n = b_n(f) = \left(\int \max _{t\in E_n}|f_t(x)|^\alpha m(dx)\right)^{1/\alpha} , \mbox{\ \ \ \ \ \ } n=1,2,\ldots\, ,
\end{equation}
where $f \in \mathcal{L}^\alpha(S,m)$ is used in the definition of $f_t$ in \eqref{eq1}. Note that by Corollary~4.4.6 of \cite{samorodnitsky:taqqu:1994}, for any specific random field $\{X_t\}_{t\in G}$, the quantity $b_n$ does not depend on the choice of $f_t$ in its integral representation \eqref{integrep}. However, in this article, we shall analyze a class of stationary \sas random fields obtained by varying $f \in \LLL^\alpha(S, m)$, and fixing the group action $\{\phi_t\}_{t \in G}$ and the cocycle $\{c_t\}_{t \in G}$ in \eqref{eq1}. With this viewpoint in mind, we are introducing the notation $b_n(f)$ even though in many situations, we shall stick to $b_n$.

Following arguments similar to that in \cite{samorodnitsky:2004a}, one can show that to a large extent, the asymptotic behaviour of the random sequence $M_n$ is determined by that of $b_n$. Hence we first look at the growth rate of the deterministic sequence $b_n$ and use that to analyze the same for $M_n$. In the sequel, $c$ will always denote a positive constant that may not necessarily be the same in each occurrence.

\begin{thm}\label{decomp} Let $G$ be a countable group generated by a finite symmetric set, and $\{\phi_t\}_{t\in G}$ be a nonsingular group action on a $\sigma$-finite standard measure space $(S, \mathcal{S}, m)$.
\begin{itemize}[leftmargin=*]
\item[(i)] Then the set $S$ can be uniquely decomposed into two disjoint $\{\phi_t\}$-invariant measurable sets $\mathcal{A}$ and $\mathcal{B}$ (i.e. $S=\mathcal{A}\cup \mathcal{B}$) such that, for any $f \in \mathcal{L}^\alpha(S,m)$,
\begin{itemize}
\item[(a)] whenever $f$ is supported on $\mathcal{B}$, $\lim_{n \rightarrow \infty} \frac{b_n(f)}{|E_n|^{1/\alpha}}=0$, and
\item[(b)] if the support of $f$ has some nontrivial intersection with $\mathcal{A}$, then \\
$\limsup_{n \rightarrow \infty} \frac{b_n(f)}{|E_n|^{1/\alpha}}>0$.
\end{itemize}
The above decomposition is the same for all measures equivalent to $m$.
\item[(ii)] The dissipative part $\mathcal{D}\subseteq \mathcal{A}$, and the positive part $\mathcal{P}\subseteq \mathcal{B}$.
\item[(iii)] If the \sas random field is given by the integral representation \eqref{integrep} and \eqref{eq1}, then
$$X_t \overset{\text{fdd}}{=} \int_\AA f_t(s)M(ds) + \int_\BB f_t(s) M(ds)=:X_t^\AA + X_t^\BB, \;t\in G$$
can be written as a sum of independent random fields $X_t^\AA$ and $X_t^\BB$ such that the following results hold.
\begin{itemize}
\item[(a)] If the component $X_t^\AA$ is zero, then
$$M_n/|E_n|^{1/\alpha} \overset{\P}{\rightarrow}0$$
\item[(b)] If the component $X_t^\AA$ is nonzero, then $M_n = O_p(|E_n|^{1/\alpha})$ (i.e., $M_n/|E_n|^{1/\alpha}$ is tight), and there exists a subsequence $M_{n_k}$ of $M_n$ and a positive constant $c>0$ such that $$M_{n_k}/|E_{n_k}|^{1/\alpha}\Rightarrow cZ_\alpha,$$ where $Z_\alpha$ is an $\alpha$-Fr\'{e}chet random variable with distribution function given in \eqref{Frechet}.
\end{itemize}
\end{itemize}
\end{thm}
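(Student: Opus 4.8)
The plan is to analyze the deterministic sequence $b_n(f)$ first and only afterwards transfer the conclusions to the random maxima $M_n$. Writing $g=|f|^\alpha\in\LLL^1(S,m)$ and $\omega_t=\tfrac{d\,m\circ\phi_t}{d\,m}$, equation~\eqref{eq1} gives $b_n(f)^\alpha=\int_S T_ng\,dm$, where $T_ng(x):=\max_{t\in E_n}\omega_t(x)\,g(\phi_t x)$ is a weighted orbital maximal operator. Since $\|f_t\|_\alpha=\|f\|_\alpha$ for every $t$, the two elementary bounds $\int T_ng\,dm\ge\|g\|_1$ (take $t=e$) and $\int T_ng\,dm\le\sum_{t\in E_n}\|f_t\|_\alpha^\alpha=|E_n|\,\|g\|_1$ show that $b_n(f)/|E_n|^{1/\alpha}$ always lies in a bounded interval; the entire content of part~(i) is therefore the dichotomy between $\lim=0$ and $\limsup>0$.

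For part~(i) I would build $\BB$ as the \emph{measurable union of a hereditary collection}. Call a measurable set $W$ \emph{negligible} if $|E_n|^{-1}\int T_n(g\mathbf 1_W)\,dm\to0$ for every $g\in\LLL^1_+$; subadditivity of the max, $T_n(g_1+g_2)\le T_ng_1+T_ng_2$, together with monotone convergence shows this collection is hereditary and closed under countable unions, so its measurable union in the sense of \cite{aaronson:1997} is a well-defined set $\BB$, and I set $\AA:=S\setminus\BB$. Property~(a) is then immediate. Invariance of $\AA,\BB$ is where the group geometry enters: because $E_{n-|s|}\subseteq sE_n\subseteq E_{n+|s|}$ and $|E_{n\pm|s|}|\asymp|E_n|$ for a finitely generated group, replacing $f$ by $f_s$ multiplies $T_n$ by the shifted ball and changes $b_n$ only by a bounded factor, so negligibility is an invariant property. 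Because $\AA$ is invariant, restriction interacts cleanly with the operator ($T_n(g\mathbf 1_\AA)=\mathbf 1_\AA\,T_ng$), and property~(b) follows: if $\supp f$ meets $\AA$ in positive measure then $f\mathbf 1_\AA$ is not negligible, whence $\limsup_n b_n(f)/|E_n|^{1/\alpha}\ge\limsup_n b_n(f\mathbf 1_\AA)/|E_n|^{1/\alpha}>0$. Measure-class invariance follows by tracking how $\omega_t$, and hence $T_n$, transform under $dm'=h\,dm$.

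For part~(ii), the inclusion $\PP\subseteq\BB$ is the soft one: on the positive part choose a finite invariant $\mu\sim m$, so $\omega_t\equiv1$ and $T_ng=\max_{t\in E_n}g\circ\phi_t$; writing $\int T_ng\,d\mu=\int_0^\infty\mu\big(\cup_{t\in E_n}\phi_t^{-1}\{g>\lambda\}\big)\,d\lambda$ and using the uniform domination $|E_n|^{-1}\mu\big(\cup_{t\in E_n}\phi_t^{-1}\{g>\lambda\}\big)\le\mu(g>\lambda)\in\LLL^1(d\lambda)$ together with the pointwise bound $\le\mu(S)/|E_n|\to0$, dominated convergence gives $|E_n|^{-1}\int T_ng\,d\mu\to0$. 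The inclusion $\DD\subseteq\AA$ is the one that forces the new transition boundary: on the dissipative part I would pass to a mixed moving average representation as in \eqref{ma}, so that $T_ng(w,s)=\max_{t\in E_n}|f(w,s-t)|^\alpha$ saturates to $\max_u|f(w,u)|^\alpha$ on a set of $s$ of cardinality $\asymp|E_n|$ (using $|E_{n-R}|\ge c|E_n|$), yielding $b_n^\alpha\gtrsim|E_n|\int_W\|f(w,\cdot)\|_\infty^\alpha\,\nu(dw)$ and hence $\limsup>0$. Here the \emph{Maharam extension} \cite{maharam:1964} is the right device to recast the weighted operator $T_n$ as an honest orbital maximal operator of a measure-preserving system, so that the Hopf structure and an exact vertex count in the Cayley tree (a combinatorial input of the type of Lemma~\ref{lemma:vertex_counting}) can be invoked to produce the $\asymp|E_n|$ surviving contributions.

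Finally, part~(iii) transfers these statements to $M_n$ through the standard tail asymptotics for maxima of jointly \sas variables, $\P(M_n>\lambda)\sim\mathfrak{C}_\alpha\,b_n(f)^\alpha\lambda^{-\alpha}$ as $\lambda\to\infty$, made uniform as in \cite{samorodnitsky:2004a}. If $X^\AA_t\equiv0$ then $f$ is supported on $\BB$, so by part~(i)(a), $\P(M_n>\varepsilon|E_n|^{1/\alpha})\lesssim\varepsilon^{-\alpha}b_n^\alpha/|E_n|\to0$, which is (iii)(a). If $X^\AA_t\not\equiv0$, the a priori bound $b_n\le\|f\|_\alpha|E_n|^{1/\alpha}$ makes $M_n/|E_n|^{1/\alpha}$ tight; choosing a subsequence along which $b_{n_k}/|E_{n_k}|^{1/\alpha}\to c_0:=\limsup_n b_n/|E_n|^{1/\alpha}>0$ and refining it so that the rescaled exceedance point process converges, a Bonferroni/point-process argument identifies the subsequential limit of $M_{n_k}/|E_{n_k}|^{1/\alpha}$ as $\alpha$-Fr\'echet (any clustering being absorbed into the constant, since a Fr\'echet law with extremal index is again a scalar multiple of $Z_\alpha$), giving $M_{n_k}/|E_{n_k}|^{1/\alpha}\Rightarrow cZ_\alpha$. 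The main obstacle throughout is part~(i)(b): constructing the decomposition via the measurable-union machinery and, above all, proving that the maximal growth genuinely persists on $\AA$ — this is where the Maharam extension and the free-group counting are indispensable and where the departure from the amenable $\mathbb{Z}^d$ picture originates.
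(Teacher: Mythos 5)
Your part (i) follows the paper's architecture quite closely: the functional $\psi(f):=\limsup_n b_n(f)^\alpha/|E_n|$, a hereditary collection of negligible sets, the measurable union via Aaronson's exhaustion lemma, the elementary bound $\int T_n h\,dm\le |E_n|\|h\|_1$ to get closure under countable unions, and the reduction of negligibility to indicators. Where you differ is the invariance step, and your route is correct but under-argued. The paper proves $\psi(A)=0\Rightarrow\psi(\phi_g(A))=0$ by lifting to the Maharam extension on $S\times(0,\infty)$, precisely because translating a \emph{set} introduces an unbounded Radon--Nikodym factor that does not cancel. You instead translate the \emph{test function} along with the set: with $w_t$ as in \eqref{wts}, the cocycle identity $w_{t\cdot s}=w_t\cdot(w_s\circ\phi_t)$ gives $w_t(x)\,h'(\phi_t x)=w_{t\cdot s}(x)\,h(\phi_{t\cdot s}x)$ for $h':=w_s\cdot(h\circ\phi_s)$, so any $h\in\LLL^1_+$ supported on $\phi_s(W)$ is dominated, maximized over $E_n$, by $h'$ (supported on $W$, with $\|h'\|_1=\|h\|_1$) maximized over $E_{n+|s|}$, and the submultiplicativity $|E_{n+|s|}|\le|E_{|s|}|\,|E_n|$ (the same combinatorial fact the paper uses) finishes the argument. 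Your one sentence hides this computation, but once filled in it is a genuine, Maharam-free simplification of the paper's Lemma~\ref{psiinvariant}. In part (ii), your proof of $\PP\subseteq\BB$ is the paper's. For $\DD\subseteq\AA$ you invoke a mixed moving average representation, i.e.\ a Krengel-type structure theorem for dissipative actions of general countable groups that the paper only sketches (in Section~\ref{Disscase}); your saturation computation would work, but the paper's argument is far lighter: for a wandering set $W^*$ the functions $w_t\,\ind_{W^*}(\phi_t\cdot)$, $t\in G$, have pairwise disjoint supports, so the max equals the sum and $b_n(\ind_{W^*})^\alpha=|E_n|\,m(W^*)$ exactly. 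Relatedly, your tool attributions are misplaced: in the paper the Maharam extension is used in part (i), not part (ii), and Lemma~\ref{lemma:vertex_counting} plays no role anywhere in this theorem (it is used in Section~\ref{DP} to show the limiting point process is Radon); no vertex counting is needed for $\DD\subseteq\AA$.

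The genuine gap is in part (iii). You derive (iii)(a) and the tightness in (iii)(b) from ``$\P(M_n>\lambda)\sim\mathfrak{C}_\alpha b_n^\alpha\lambda^{-\alpha}$, made uniform as in \cite{samorodnitsky:2004a}.'' The fixed-$n$ asymptotic (as $\lambda\to\infty$) is classical, but uniformity over $n$ at the level $\lambda=\varepsilon|E_n|^{1/\alpha}$ is exactly the hard point, and it is not available off the shelf: a union bound only yields $\P(M_n>\varepsilon|E_n|^{1/\alpha})\le C\varepsilon^{-\alpha}\|f\|_\alpha^\alpha$, a constant, and the series-representation arguments in \cite{samorodnitsky:2004a} that upgrade this to a bound of order $b_n^\alpha/|E_n|$ require $b_n\to\infty$ at a polynomial rate --- which can fail when $f$ is supported on $\BB$ (e.g.\ on the positive part, $b_n$ stays bounded). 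This is precisely why the paper introduces the auxiliary field: an independent stationary field $\{Y_t\}$ as in Example~\ref{eg1}, with kernel $g$ satisfying $\psi(g)=0$ and $b_n(g)\ge c|E_n|^{\theta}$, and the combined field $Z_t=X_t+Y_t$ realized on the disjoint union $S\cup S'$; one checks $\psi(f\ind_S+g\ind_{S'})=0$ and $b_n^Z\le|E_n|^{1/\alpha}\bigl(\|f\|_\alpha^\alpha+\|g\|_\alpha^\alpha\bigr)^{1/\alpha}$, runs the proofs of (4.3) and (4.9) of \cite{samorodnitsky:2004a} for $Z$, and transfers the conclusions back to $X$. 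Your proposal omits this regularization entirely, so as written (iii)(a) and the tightness claim rest on an unproven maximal inequality. The remainder of your (iii)(b) --- extracting $n_k$ with $b_{n_k}/|E_{n_k}|^{1/\alpha}\to\limsup>0$ and identifying a Fr\'echet limit --- matches the paper, with one simplification available to you: along such a subsequence the limit of $M_{n_k}/b_{n_k}$ is exactly $\mathfrak{C}_\alpha^{1/\alpha}Z_\alpha$, with no separate extremal-index correction, since clustering is already encoded in $b_n$ being a max-integral rather than a sum-integral.
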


Keeping in mind the first part of the above theorem, we shall call $\AA$ the nondegenerate part and $\BB$ the degenerate part. It is possible that this decomposition may be known in the ergodic theory literature by some other name although our extensive literature search did not reveal any. When $G=\Z^d$, the above decomposition is the same as the Hopf decomposition of the group action with $\AA=\mathcal{D}$ and $\BB=\mathcal{C}$ (see \cite{samorodnitsky:2004a} and \cite{roy:samorodnitsky:2008}). For a general group $G$, even if the support of $f$ has a nontrivial intersection with the nondegenerate part $\AA$, one cannot surely say that $\lim_{n \rightarrow \infty} \frac{b_n(f)}{|E_n|^{1/\alpha}}>0$ simply because the limit may not always exist. In particular, we need to work with  limit superior as opposed to the limit in Part (i)(b) of Theorem~\ref{decomp}. However, when $G$ is a free group of finite rank $d \geq 2$, we can significantly improve our previous result as shown in the  following theorem.

\begin{thm}\label{freegroupliminf} When $G$ is a free group of finite rank $d \geq 2$, for any $f\in \LLL^\alpha(S,m)$ whose support has some nontrivial intersection with $\AA$, one has,
$$\liminf_{n \rightarrow \infty} \frac{b_n(f)}{|E_n|^{1/\alpha}}>0.$$
Also given any subsequence $M_{n_k}$ of $M_n$, there exists a further subsequence $M_{n_{k_\l}}$ and a positive constant $c>0$ such that
$$M_{n_{k_\l}}/|E_{n_{k_\l}}|^{1/\alpha}\Rightarrow cZ_\alpha,$$
where $Z_\alpha$ is an $\alpha$-Fr\'echet random variable as before.
\end{thm}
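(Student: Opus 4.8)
The plan is to reduce everything to the deterministic functional $I(F):=\int_S\max_{t\in F}|f_t|^\alpha\,dm$, so that $b_n(f)^\alpha=I(E_n)$, and to prove the sharper statement $\liminf_n I(E_n)/|E_n|>0$ first; the weak convergence will then follow from the same maxima-to-$b_n$ mechanism already behind Theorem~\ref{decomp}(iii). The one computational fact I would record at the outset is the left-translation invariance $I(sF)=I(F)$ for every $s\in G$ and every finite $F\subseteq G$: writing $|f_{su}(x)|^\alpha=|f_u(\phi_s(x))|^\alpha\,\frac{d\,m\circ\phi_s}{d\,m}(x)$ from the cocycle and the Radon--Nikodym chain rule, and changing variables $y=\phi_s(x)$, gives this at once.

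Next I would exploit the self-similar (tree) structure of the free group. Writing $E_n\setminus\{e\}=\bigsqcup_{c\in D}c\,Q^{c}_{n-1}$, where $Q^{c}_m$ is the cone of reduced words of length $\le m$ not beginning with $c^{-1}$, and iterating the analogous decomposition $Q^{c}_m=\{e\}\sqcup\bigsqcup_{c'\neq c^{-1}}c'\,Q^{c'}_{m-1}$, the subadditivity of the maximum together with $I(sF)=I(F)$ yields the recursion $\max_c I(Q^{c}_m)\le\|f\|_\alpha^\alpha+(2d-1)\max_{c'}I(Q^{c'}_{m-1})$, hence the a priori upper bound $I(E_n)\le C(2d-1)^n\asymp|E_n|$ (consistent with the tightness in Theorem~\ref{decomp}(iii)(b)). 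The key inequality, obtained by peeling $E_N$ into its core and the radius-$m$ cones hanging from the sphere $C_{N-m}$, is $E_N=E_{N-m-1}\sqcup\bigsqcup_{s\in C_{N-m}}s\,Q^{c(s)}_{m}$, whence $I(E_N)\le I(E_{N-m-1})+|C_{N-m}|\,I(E_m)$ for all $N>m$.

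With these two estimates the dichotomy of Theorem~\ref{decomp}(i)(b) upgrades cleanly. Suppose $\liminf_n I(E_n)/|E_n|=0$ and pick $m_k\to\infty$ with $I(E_{m_k})/|E_{m_k}|\to0$; since the support of $f$ meets $\AA$, Theorem~\ref{decomp}(i)(b) provides $\eta>0$ and, for each $k$, an index $N_k>m_k$ with $I(E_{N_k})/|E_{N_k}|\ge\eta$. Feeding $N_k,m_k$ into the key inequality and dividing by $|E_{N_k}|\asymp(2d-1)^{N_k}$, the first term is $\le C(2d-1)^{-m_k-1}\to0$ by the a priori bound, while the second is $\asymp I(E_{m_k})/|E_{m_k}|\to0$; this forces $\eta\le0$, a contradiction, so $\liminf_n b_n(f)/|E_n|^{1/\alpha}>0$. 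I expect this step to be the main obstacle: the essential idea is to combine an a priori \emph{upper} bound with a multi-scale \emph{sub}additive inequality, rather than to look for a (false) superadditivity that would try to produce the branching factor $2d-1$ inside an integral of a maximum; translation invariance and the tree peeling are its technical core.

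Finally, for the maxima, the a priori upper bound and the just-proved liminf confine $b_n(f)/|E_n|^{1/\alpha}$ to a compact subinterval of $(0,\infty)$. Given any subsequence $M_{n_k}$, I would extract a further subsequence along which $b_{n_{k_\ell}}/|E_{n_{k_\ell}}|^{1/\alpha}\to c_0$ for some $c_0>0$, and then invoke the convergence mechanism behind Theorem~\ref{decomp}(iii)(b) --- the regularly varying tail of $\max_{t\in E_n}|X_t|$ with constant $\mathfrak{C}_\alpha\,b_n^\alpha$ read off from the series representation --- to conclude $M_{n_{k_\ell}}/|E_{n_{k_\ell}}|^{1/\alpha}\Rightarrow c_0\,\mathfrak{C}_\alpha^{1/\alpha}Z_\alpha=cZ_\alpha$ with $c>0$. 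The role of the liminf is precisely to guarantee $c_0>0$, so that every subsequential limit is a genuine, non-degenerate $\alpha$-Fr\'echet law.
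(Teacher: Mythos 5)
Your proposal is correct, and its engine is the same as the paper's --- the translation invariance $I(sF)=I(F)$ you record at the outset, via the cocycle and the Radon--Nikodym chain rule, is exactly the computation at the heart of the paper's proof --- but the covering inequality you build on it is different. The paper uses the crude product covering $E_{n+m}=E_n\cdot E_m$, which together with translation invariance gives the one-line submultiplicativity
\[
(b_{n+m}(f))^\alpha=\int \max_{t_1\in E_n}\max_{t_2\in E_m}|f_{t_1t_2}(x)|^\alpha\,m(dx)\le |E_n|\,(b_m(f))^\alpha,
\]
and hence, by the bounds \eqref{trivial_bounds},
\[
\frac{(b_{n+m}(f))^\alpha}{|E_{n+m}|}\le\Bigl(\frac{d}{d-1}\Bigr)^{2}\,\frac{(b_m(f))^\alpha}{|E_m|}\qquad\text{for all } m,n.
\]
This says that once the normalized quantity is small at one scale it stays comparably small at \emph{every} larger scale, so $\liminf=0$ forces $\limsup=0$ (the paper's statement \eqref{liminfsup}), directly contradicting Theorem \ref{decomp}(i)(b); no a priori upper bound and no second scale are needed. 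Your disjoint cone peeling $E_N=E_{N-m-1}\sqcup\bigsqcup_{s\in C_{N-m}}sQ^{c(s)}_m$ is a sharper version of the same idea (a partition rather than a multiplicity-$|E_n|$ cover), and your two-scale contradiction is logically equivalent to the paper's propagation-of-smallness; both ultimately rest on the single non-amenability fact $|E_n||E_m|/|E_{n+m}|=O(1)$, equivalently $|C_{N-m}|\,|E_m|=\Theta(|E_N|)$. Two small remarks: your a priori bound needs no recursion, since $I(E_n)\le\sum_{t\in E_n}\int|f_t|^\alpha\,dm=|E_n|\,\|f\|_\alpha^\alpha$ by translation invariance, an estimate the paper itself uses in the tightness part of Theorem \ref{decomp}(iii); and your treatment of the maxima --- confining $b_n(f)/|E_n|^{1/\alpha}$ to a compact subinterval of $(0,\infty)$, extracting a convergent further subsequence, and running the mechanism of (4.9) of \cite{samorodnitsky:2004a} along it --- is verbatim the paper's second half. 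What your version buys is the explicit boundary-sphere term $|C_{N-m}|\,I(E_m)$, which resonates with Theorem \ref{boundarymatters}; what the paper's buys is brevity and the cleaner quantitative dichotomy that smallness at a single scale $m$ controls all scales beyond it.
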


In fact, if $f$ is supported on the dissipative part $\DD$, then for any finitely generated countable group $G$, $\liminf_{n \rightarrow \infty} \frac{b_n(f)}{|E_n|^{1/\alpha}}>0$ (see the proof of Part (ii) of Theorem \ref{decomp}). When $G$ is a free group of finite rank $d \geq 2$ and Support$(f) \subseteq \DD$, then the limit exists and as a consequence, $M_n/(2d-1)^{n/\alpha} \Rightarrow c Z_\alpha$ for some $c>0$; see Corollary~\ref{maxima} below. For the rest of this section and the next one, we shall assume that $G$ is a free group of finite rank $d \geq 2$. Thanks to the non-amenability of this group, the decomposition of $S$ into degenerate and nondegenerate parts is now different from what happens in the  $\Z^d$ case, where it coincides with the Hopf decomposition. This leads to a new dichotomy (see below) for the maxima sequence $M_n$ defined in \eqref{def_M_n_general}.

\begin{thm}\label{shiftofboundary}
When $G$ is a free group of finite rank $d \geq 2$, there exists a stationary \sas random field indexed by $G$ generated by a conservative action, for which we have $M_n/(2d-1)^{n/\alpha} \Rightarrow \mathfrak{C}_\alpha^{1/\alpha} Z_\alpha$, where $\mathfrak{C}_\alpha$ is as defined in \eqref{sttail} and $Z_\alpha$ is a standard $\alpha$-Fr\'{e}chet random variable. Moreover if $\CC\NN:=\CC\cap \NN$ denotes the conservative null part of the action, then $\CC\NN$ can have nontrivial intersections with both the nondegenerate part $\AA$ and the degenerate part $\BB$.
\end{thm}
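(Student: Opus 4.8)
The plan is to exhibit the required field as the one generated by the canonical (right) action of $G$ on its Furstenberg–Poisson boundary $\partial G$ with the Patterson–Sullivan measure, using the simplest possible kernel. Realize $\partial G$ as the set of infinite reduced words $\xi=\xi_1\xi_2\cdots$, let $m$ be the Patterson–Sullivan probability measure, so that $m(C(w))=\big(2d(2d-1)^{|w|-1}\big)^{-1}$ on the cylinder $C(w)$ of any nonempty reduced word $w$, and set $\phi_t(\xi)=t^{-1}\xi$ (concatenate and reduce). Then $\phi_{u\cdot v}=\phi_v\circ\phi_u$, so this is a nonsingular $G$-action in the sense of \eqref{eq1}. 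Writing $c(t,\xi)$ for the length of the longest common prefix of $\xi$ and the reduced word $t$, a direct cylinder computation gives the Radon–Nikodym cocycle
\[
\frac{d\,m\circ\phi_t}{d\,m}(\xi)=(2d-1)^{2c(t,\xi)-|t|}.
\]
I take $f\equiv 1$ (which lies in $\LLL^\alpha(\partial G,m)$ since $m$ is a probability measure) and the trivial cocycle $c_t\equiv 1$ in \eqref{eq1}, so that $f_t(\xi)=(2d-1)^{(2c(t,\xi)-|t|)/\alpha}$, and let $\{X_t\}$ be the resulting stationary \sas field.

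The decisive simplification is that the kernel is \emph{flat} after maximizing. Since $2c(t,\xi)-|t|\le|t|\le n$ with equality exactly when $t$ is the length-$n$ prefix of $\xi$, one has $\max_{t\in E_n}|f_t(\xi)|^\alpha=(2d-1)^{n}$ for \emph{every} $\xi$, and hence, from \eqref{definitionofb_n}, $b_n=(2d-1)^{n/\alpha}$ exactly. Because $|E_n|=\Theta((2d-1)^n)$, Theorem~\ref{decomp}(i) shows that $\supp(f)=\partial G$ meets the nondegenerate part $\AA$. Conservativity of this action is classical (the free group is of divergence type, so its boundary action with the Patterson–Sullivan measure is conservative and ergodic); alternatively it follows from the Hopf-type criterion, since the terms with $c(t,\xi)=0$ in each shell $|t|=k$ already contribute a fixed positive amount, whence $\sum_{t\in G}(2d-1)^{2c(t,\xi)-|t|}=\infty$ for every $\xi$. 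Thus we have a conservative action whose $b_n$ grows at the dissipative rate $(2d-1)^{n/\alpha}\asymp|E_n|^{1/\alpha}$, which is the promised shift of the phase boundary.

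For the limit law I would pass to the series representation available because $m$ is a probability measure: with $(\Gamma_i)$ the arrival times of a unit-rate Poisson process, $(\epsilon_i)$ i.i.d.\ signs and $(U_i)$ i.i.d.\ $\sim m$, all independent,
\[
\{X_t\}_{t\in G}\overset{\text{fdd}}{=}\Big\{\mathfrak{C}_\alpha^{1/\alpha}\sum_{i\ge 1}\epsilon_i\Gamma_i^{-1/\alpha}f_t(U_i)\Big\}_{t\in G}.
\]
A single-big-jump argument, exactly of the type used for Theorem~\ref{freegroupliminf} and the point-process convergence in Section~\ref{Disscase}, shows that $M_n/(2d-1)^{n/\alpha}$ has the same weak limit as $\mathfrak{C}_\alpha^{1/\alpha}(2d-1)^{-n/\alpha}\max_i\Gamma_i^{-1/\alpha}\max_{t\in E_n}|f_t(U_i)|$. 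By flatness the inner maximum is identically $(2d-1)^{n/\alpha}$, so this equals $\mathfrak{C}_\alpha^{1/\alpha}\max_i\Gamma_i^{-1/\alpha}=\mathfrak{C}_\alpha^{1/\alpha}\Gamma_1^{-1/\alpha}$, whose law is $\mathfrak{C}_\alpha^{1/\alpha}Z_\alpha$ by \eqref{Frechet}, giving $M_n/(2d-1)^{n/\alpha}\Rightarrow\mathfrak{C}_\alpha^{1/\alpha}Z_\alpha$. This is the main obstacle: one must verify that clustering neither inflates nor deflates the constant. Flatness guarantees that each Poisson point $U_i$ produces a single peak of height $\mathfrak{C}_\alpha^{1/\alpha}\Gamma_i^{-1/\alpha}(2d-1)^{n/\alpha}$ at the vertex equal to its length-$n$ prefix, while the remaining same-direction terms form a subcritical cluster that does not affect the top order; this is why the naive expected exceedance intensity $\tfrac{d}{d-1}\mathfrak{C}_\alpha u^{-\alpha}$ (from $\|f_t\|_\alpha\equiv 1$ and $|E_n|\sim\tfrac{d}{d-1}(2d-1)^n$) is thinned to the true intensity $\mathfrak{C}_\alpha u^{-\alpha}$, yielding the clean constant with no factor $\kappa_X<1$. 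Making this rigorous uses the paper's vertex-counting estimates together with the fixed-$n$ tail $\prob(M_n>\lambda)\sim\mathfrak{C}_\alpha b_n^\alpha\lambda^{-\alpha}$ from \cite{samorodnitsky:taqqu:1994}.

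For the ``moreover'' statement, note that the boundary action above is ergodic and of type III (no finite equivalent invariant measure), so for it $\CC=\NN=S$ while $\AA=S$ and $\BB=\nullm$; by itself it witnesses $\CC\NN\cap\AA$ but not $\CC\NN\cap\BB$. To realize both intersections I would take a non-ergodic conservative null action as a direct sum $S=\partial G\sqcup\Omega$, $m=m_1\oplus m_2$, of the boundary action on $(\partial G,m_1)$ and a second action $\phi^{(2)}_t=T^{\psi(t)}$, where $\psi:G\to\Z$ is a surjective homomorphism (say $\psi(a_i)=1$) and $T$ is a conservative, ergodic, infinite-measure-preserving transformation of $(\Omega,m_2)$. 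Since $\psi$ is not injective, no nonempty set is wandering for $\phi^{(2)}$, so $\phi^{(2)}$ is conservative, and it is null because a finite equivalent invariant measure would be $T$-invariant, contradicting the infinite-measure ergodicity of $T$; hence the direct sum is conservative and null, i.e.\ $\CC\NN=S$. For any $g\in\LLL^\alpha(\Omega,m_2)$ one has $\psi(E_n)\subseteq\{-n,\dots,n\}$, so $\max_{t\in E_n}|g(T^{\psi(t)}\omega)|\le\max_{|k|\le n}|g(T^k\omega)|$ and therefore $b_n(g)^\alpha\le\int_\Omega\max_{|k|\le n}|g\circ T^k|^\alpha\,dm_2=o(n)=o(|E_n|)$, where the bound $o(n)$ is the classical $\Z$-statement for conservative actions that these authors generalize. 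Thus $\Omega\subseteq\BB$, whereas ergodicity of the boundary action forces $\partial G\subseteq\AA$, so $\CC\NN=S$ meets both $\AA\supseteq\partial G$ and $\BB\supseteq\Omega$ in positive measure. The delicate point here is obtaining $\Omega\subseteq\BB$ without a free-group pointwise ergodic theorem (which fails); routing the estimate through $\psi$ to the classical $\Z$-ergodic theorem is what makes it go through.
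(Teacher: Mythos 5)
Your proposal is correct, and for the main assertion it coincides with the paper's own proof: Theorem~\ref{shiftofboundary} is proved in the paper by pointing to Examples~\ref{eg2} and~\ref{eg3}, and your first construction \emph{is} Example~\ref{eg2} verbatim --- the boundary action with Patterson--Sullivan measure, $f\equiv 1$, trivial cocycle, the same Radon--Nikodym formula (your $(2d-1)^{2c(t,\xi)-|t|}$ is the paper's $(2d-1)^{-B_\omega(t)}$), the same flatness computation $\max_{t\in E_n}|f_t|^\alpha\equiv(2d-1)^n$ giving $b_n^\alpha=(2d-1)^n$ exactly, and the same delegation of $M_n/b_n\Rightarrow \mathfrak{C}_\alpha^{1/\alpha}Z_\alpha$ to the single-big-jump argument of Theorem~4.1 of \cite{samorodnitsky:2004a}. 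One small misattribution: making that limit rigorous does \emph{not} use Lemma~\ref{lemma:vertex_counting} (which serves only the Radon-ness of the point-process limit in the dissipative case); what is needed is the control of the residual series terms exactly as in Samorodnitsky's proof, which your flat kernel makes painless since $b_n^\alpha/|E_n|\to (d-1)/d>0$.

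Where you genuinely diverge is the ``moreover'' clause and some sub-arguments, and your variants are all sound. The paper witnesses the two intersections with two \emph{separate} conservative null actions: in Example~\ref{eg2} it shows $\AA=S$ via Theorem~\ref{setsinA}, counting disjoint translates $\{\phi_t(H_a)\}_{t\in T_n}$ of cylinder sets --- a self-contained argument needing no ergodicity --- while Example~\ref{eg3} (the shift on $\mathbb{R}$ factoring through $\psi:G\to\mathbb{Z}$) gives $\BB=S$. You instead build a single direct-sum action realizing both intersections at once; your second summand is the paper's Example~\ref{eg3} in generalized form, and your verifications are correct: conservativity because $\ker\psi\neq\{e\}$ forces $\phi_t(W)=\phi_{t'}(W)$ for $t\neq t'$, nullity because a finite equivalent invariant measure would contradict infinite-measure ergodicity of $T$, and degeneracy from $b_n(\mathbf{1}_B)^\alpha\leq(2n+1)m_2(B)=o(|E_n|)$ --- note the trivial $O(n)$ bound already suffices against exponential $|E_n|$, so the conservative-$\mathbb{Z}$ refinement to $o(n)$ is not even needed (the paper uses precisely this trivial count, $2n+1$). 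Your placement of $\partial G$ inside $\AA$ via ergodicity plus invariance of $\AA$, $\BB$ is valid but imports ergodicity of the boundary action as a cited fact, where the paper's cylinder-set route avoids it; conversely, your Hopf-type conservativity argument (divergence of $\sum_t w_t$ a.e., which implies conservativity here because $m$ is a probability measure, so $\int_W\sum_t w_t\,dm=\sum_t m(\phi_t(W))\leq 1$ on any wandering $W$) is a nice self-contained alternative to the paper's citation of \cite{Grigorchuk:2012}. Both packagings prove the theorem; yours buys a single action whose conservative null part meets both $\AA$ and $\BB$, which is slightly stronger than exhibiting the two intersections in two different systems.
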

That is, we shall give two instances (see Examples~\ref{eg2} and \ref{eg3} below) of stationary \sas random fields generated by conservative null actions, such that for one, the partial maxima grows at the rate of $(2d-1)^{n/\alpha}$ (or $|E_n|^{1/\alpha}$) and for the other, the partial maxima grows at a strictly smaller rate.
\begin{figure}[h]
    \centering
    \includegraphics[width=0.35\textwidth]{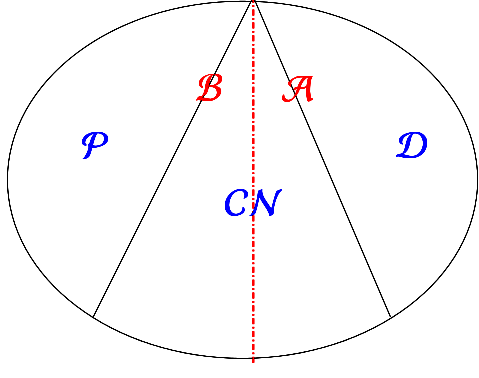}
    \caption{Boundary between nondegenerate part ($\mathcal{A}$) and degenerate part ($\mathcal{B}$)}
    \label{fig:boundary}
\end{figure}
Note that Hopf and Neveu decompositions of the underlying nonsingular action induce the partition of $S = \mathcal{P} \cup \CC\NN \cup \DD$ into positive, conservative null, and dissipative parts. Our phase transition boundary (between the degenerate and the non-degenerate parts) lies strictly between the Hopf and Neveu boundaries and passes through the conservative null part ($\CC\NN$) of the group action; see the dotted line in Figure~\ref{fig:boundary}.

The next result says that the asymptotic behaviour of the partial maxima for the balls of increasing radii is actually determined by the interior boundaries of the balls. Clearly, this is intrinsically a non-amenable phenomenon that would never happen in the lattice case.

\begin{thm}\label{boundarymatters} Let $G$ be a free group of finite rank $d\geq 2$, and let the stationary \sas random field indexed by $G$ has integral representation \eqref{integrep}. Then we have
\begin{eqnarray*}
\limsup_{n\rightarrow \infty} \frac{\int \max_{t\in E_n}|f_t(x)|^\alpha m(dx)}{(2d-1)^{n}}>0 \mbox{\ \ \ \ if and only if\ \ \ \ \ } \\
\limsup_{n\rightarrow \infty} \frac{\int \max_{t\in C_n}|f_t(x)|^\alpha m(dx)}{(2d-1)^{n}}>0.
\end{eqnarray*}
\end{thm}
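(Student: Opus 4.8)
The plan is to treat the two implications separately, one being immediate and the other carrying all the content. Throughout, write $b_n^\alpha=\int \max_{t\in E_n}|f_t(x)|^\alpha\,m(dx)$ and $\beta_n^\alpha=\int \max_{t\in C_n}|f_t(x)|^\alpha\,m(dx)$ for the ball and boundary quantities; both are finite since each is dominated by the finite sum $\sum_{t\in E_n}\|f_t\|_\alpha^\alpha$. Since $C_n\subseteq E_n$, one has the pointwise domination $\max_{t\in C_n}|f_t(x)|\le \max_{t\in E_n}|f_t(x)|$ for every $x$, hence $\beta_n^\alpha\le b_n^\alpha$ and therefore $\beta_n^\alpha/(2d-1)^n\le b_n^\alpha/(2d-1)^n$. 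This gives the reverse implication for free: a positive $\limsup$ of the boundary ratio forces a positive $\limsup$ of the ball ratio.

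For the forward implication I would argue by contraposition, and the one genuinely useful input is the disjoint decomposition $E_n=\bigcup_{m=0}^n C_m$. This yields, for every $x$,
\[
\max_{t\in E_n}|f_t(x)|^\alpha=\max_{0\le m\le n}\;\max_{t\in C_m}|f_t(x)|^\alpha\le \sum_{m=0}^n \max_{t\in C_m}|f_t(x)|^\alpha,
\]
and integrating against $m$ gives the key inequality $b_n^\alpha\le \sum_{m=0}^n \beta_m^\alpha$.

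Now suppose $\limsup_{n}\beta_n^\alpha/(2d-1)^n=0$; I want to conclude $\limsup_n b_n^\alpha/(2d-1)^n=0$. Fix $\varepsilon>0$ and choose $N$ with $\beta_m^\alpha\le \varepsilon\,(2d-1)^m$ for all $m\ge N$. Splitting the sum at $N$ and using the geometric bound $\sum_{m=N}^n (2d-1)^m\le \frac{2d-1}{2d-2}\,(2d-1)^n$ (valid because $d\ge 2$, so the common ratio $2d-1\ge 3$ exceeds $1$), I get
\[
\frac{b_n^\alpha}{(2d-1)^n}\le \frac{\sum_{m=0}^{N-1}\beta_m^\alpha}{(2d-1)^n}+\varepsilon\,\frac{2d-1}{2d-2}.
\]
Letting $n\to\infty$ kills the first term, and then letting $\varepsilon\downarrow 0$ forces $\limsup_n b_n^\alpha/(2d-1)^n=0$, which is exactly the contrapositive of the forward implication.

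The only real obstacle is recognizing that the whole statement rests on the geometric summability of $(2d-1)^m$: because the ratio exceeds $1$, the partial sums $\sum_{m\le n}(2d-1)^m$ are comparable, up to a constant, to their last term $(2d-1)^n$, so the contributions from the boundaries near level $n$ dominate and the ball and boundary rates must coincide. This is precisely where non-amenability (equivalently $d\ge 2$) enters: for $d=1$ the ratio is $1$, the partial sums grow only linearly, and the equivalence genuinely fails, consistent with the statement that this is an intrinsically non-amenable phenomenon. No deeper ergodic-theoretic machinery is needed here; the content is this elementary but crucial geometric-series comparison.
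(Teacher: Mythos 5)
Your proof is correct, and it takes a genuinely different (and slightly more economical) route than the paper's, though both ultimately rest on the same geometric fact that $\sum_{m=0}^{n}(2d-1)^m$ is comparable to its last term. You split the disjoint shell decomposition $E_n=\bigcup_{m=0}^{n}C_m$ at a \emph{fixed bottom level} $N$: the finitely many shells below $N$ are killed by the growing denominator $(2d-1)^n$, every shell at or above $N$ is $\varepsilon$-small by hypothesis (here you implicitly use that a nonnegative sequence with vanishing $\limsup$ actually converges to zero, which is what licenses the choice of $N$ -- worth stating), and the geometric tail bound $\sum_{m=N}^{n}(2d-1)^m\le\frac{2d-1}{2d-2}(2d-1)^n$ finishes the contrapositive. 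The paper instead splits at a \emph{sliding top level}: it writes $E_n=E_{n-K}\cup\bigcup_{i=0}^{K-1}C_{n-i}$, handles the top $K$ shells by the boundary hypothesis together with the damping factors $(2d-1)^{-i}$, and disposes of the inner ball via $\int\max_{t\in E_{n-K}}|f_t(x)|^\alpha\,m(dx)\le\|f\|_\alpha^\alpha\,|E_{n-K}|\le\epsilon\,\|f\|_\alpha^\alpha\,(2d-1)^n$. That last bound uses the identity $\int_S|f_t|^\alpha\,dm=\|f\|_\alpha^\alpha$ for every $t$, i.e.\ the constancy of $\|f_t\|_\alpha$ coming from stationarity through the Rosi\'{n}ski form \eqref{eq1}; your argument never invokes this, so it proves the equivalence for an arbitrary family $\{f_t\}\subseteq\mathcal{L}^\alpha(S,m)$, stationary or not -- a small but genuine gain in generality. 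What the paper's split makes more visible is the structural point that only the top $O(1)$ shells of $E_n$ matter, in keeping with its theme that extremes on free groups are driven by the interior boundary; your version delivers the same theorem with strictly less input. One cosmetic caution: your shell index clashes with the paper's symbol $m$ for the control measure, so the phrase ``integrating against $m$'' should be reworded to avoid ambiguity.
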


In the next theorem, we try to find some sets that belong to the nondegenerate part $\AA$ of Theorem \ref{decomp}. It states that if a set has sufficient number of disjoint translates in each ball, then the set is inside $\AA$.

\begin{thm} \label{setsinA}
Define, for any subset $B \subseteq S$, $a_n(B)$ to be the maximum number of sets in $\{\phi_t(B): t\in E_n\}$ that are pairwise disjoint, i.e., $a_n(B):= \max\{|T|: T \subseteq E_n \mbox{\and\ \ } \phi_t(B) \mbox{\ are pairwise disjoint for all\ \ } t \in T\}$. If $\limsup_{n\rightarrow \infty} \frac{a_n(B)}{|E_n|}>0$ for some subset $B \subseteq S$, then $B \subseteq \AA$.
\end{thm}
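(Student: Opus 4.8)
The plan is to argue by contradiction: assuming $B \not\subseteq \AA$, that is $m(B \cap \BB) > 0$, I would produce a function supported on the degenerate part $\BB$ whose $b_n$-sequence fails to be $o(|E_n|^{1/\alpha})$, contradicting Part~(i)(a) of Theorem~\ref{decomp}. The engine of the argument is a lower bound for $b_n$ of an indicator function in terms of the disjointness count $a_n$.

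First I would establish the key estimate: for any measurable $B_0 \subseteq S$ with $0 < m(B_0) < \infty$, taking $f = \mathbf{1}_{B_0}$ one has
$$b_n(f)^\alpha \geq a_n(B_0)\, m(B_0).$$
To see this, fix $n$ and choose $T \subseteq E_n$ with $|T| = a_n(B_0)$ and $\{\phi_t(B_0) : t \in T\}$ pairwise disjoint. Since each $\phi_t$ is a bijection with inverse $\phi_{t^{-1}}$ and $E_n$ is symmetric under inversion, I would pass to $T' = \{t^{-1} : t \in T\} \subseteq E_n$, so that the ``$x$-supports'' $U_s := \{x : \phi_s(x) \in B_0\} = \phi_{s^{-1}}(B_0)$, $s \in T'$, are precisely the pairwise disjoint sets $\{\phi_t(B_0) : t \in T\}$. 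On $U_s$ one has $|f_s(x)|^\alpha = (dm\circ\phi_s/dm)(x)$ from \eqref{eq1} (using $|c_s| = 1$), hence $\max_{u \in E_n}|f_u(x)|^\alpha \geq (dm\circ\phi_s/dm)(x)$ there. Summing the integrals over the disjoint $U_s$ and using the norm-preservation identity $\int (dm\circ\phi_s/dm)\,(\mathbf{1}_{B_0}\circ\phi_s)\,dm = m(B_0)$ (which is exactly the identity behind $\|f_s\|_\alpha = \|f\|_\alpha$) yields the claimed bound.

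Second I would localize to the degenerate part. Choosing a strictly $\{\phi_t\}$-invariant version of $\BB$ (possible since $G$ is countable and the defining symmetric differences are $m$-null), injectivity of $\phi_t$ gives $\phi_t(B \cap \BB) = \phi_t(B) \cap \BB \subseteq \phi_t(B)$; thus any index family making the translates of $B$ disjoint also makes the translates of $B \cap \BB$ disjoint, so $a_n(B \cap \BB) \geq a_n(B)$ for every $n$. Consequently $\limsup_n a_n(B\cap\BB)/|E_n| \geq \limsup_n a_n(B)/|E_n| > 0$.

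Finally I would combine the two ingredients. If $m(B \cap \BB) > 0$, applying the key estimate with $B_0 = B \cap \BB$ and $f = \mathbf{1}_{B \cap \BB}$ gives $b_n(f)^\alpha/|E_n| \geq \big(a_n(B\cap\BB)/|E_n|\big)\,m(B\cap\BB)$, whence $\limsup_n b_n(f)/|E_n|^{1/\alpha} > 0$. But $f$ is supported on $\BB$, so Part~(i)(a) of Theorem~\ref{decomp} forces this limit to be $0$ — a contradiction. Hence $m(B \cap \BB) = 0$, i.e. $B \subseteq \AA$ modulo $m$-null sets. The step I expect to require the most care is the inverse bookkeeping in the key estimate — matching the pairwise-disjoint translates $\phi_t(B_0)$ with the sets $U_s = \phi_{s^{-1}}(B_0)$ on which the individual summands of the maximum are controlled — together with the mod-null-to-strict passage for the invariant set $\BB$.
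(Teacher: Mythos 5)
Your proposal is correct and is essentially the paper's own argument: the paper proves exactly your key estimate $b_n(\ind_C)^\alpha \geq a_n(C)\,m(C)$ via the same inverse-translate bookkeeping (the summands $w_t(s)\ind_C(\phi_t(s))$ are supported on the pairwise disjoint sets $\phi_{t^{-1}}(C)$, so the maximum equals the sum), and then concludes $\psi(C)>0$ for every positive-measure $C\subseteq B$, which by the characterization of $\AA$ in the proof of Theorem~\ref{decomp}(i) gives $B\subseteq\AA$ --- your contradiction with Theorem~\ref{decomp}(i)(a) applied to $\ind_{B\cap\BB}$ is just the contrapositive packaging of that same step. Two cosmetic remarks: if $m(B\cap\BB)=\infty$ you must first pass to a subset of finite positive measure so the indicator lies in $\mathcal{L}^\alpha$ (harmless, since $a_n$ is monotone under taking subsets, which is also all you need --- the strict-invariance discussion for $\BB$ is superfluous, as $\phi_t(B\cap\BB)\subseteq\phi_t(B)$ already inherits disjointness).
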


The proofs of the theorems stated in this section are given in Section \ref{CP}. Finally, we give three examples of stationary \sas random fields generated by conservative actions. The first example holds for any countable finitely generated group $G$, and is crucial for the proof of Part (iii) of Theorem \ref{decomp}. This is parallel to Example 5.4 in \cite{samorodnitsky:2004a}.

\begin{example}\label{eg1} Let $S=\R^G$ and $M$ is an \sas random measure on $\R^G$ whose control measure $m$ is a probability measure under which the projections $(\pi_t, t\in G)$ are i.i.d. random variables with a finite absolute $\alpha^{\mbox{th}}$ moment. Let $\pi=\pi_e:\R^G\mapsto \R$ as $\pi((x_t)_{t\in G})=x_e$, and $\phi_t$ is the shift operator, i.e., $(\phi_t((x_s)_{s\in G}))_k=x_{t\cdot k}$. Clearly this action is probability $m$-preserving and hence is conservative. The random field has the integral representation
\[
X_t\overset{\text{fdd}}{=}\int_{\R^G}\pi\circ \phi_t dM=\int_{\R^G}\pi_tdM, \quad t\in G.
\]
Now, if the projections $\pi_t$, $t\in G$ are i.i.d. Pareto random variables with
$m(\pi_e>x)=x^{-\theta}$, $x\geq 1$ for some $\theta >\alpha$, then as in Example 5.4 in \cite{samorodnitsky:2004a}, we get,
$$b_n\sim c_{\alpha,\theta}^{1/\alpha}|E_n|^{1/\theta} \quad \mbox{as\ } n\rightarrow \infty $$
for some positive constant $c_{\alpha,\theta}$. Furthermore, in this case, $M_n/|E_n|^{1/\theta}$ converges to an $\alpha$-Fr\'echet distribution. This example will be required in the proof of Part~(iii) of Theorem \ref{decomp}.
\end{example}
In the next two examples, $G$ is a free group of finite rank $d \geq 2$. The first one considers the canonical action of the free group on its Furstenberg-Poisson boundary with the Patterson-Sullivan measure on it. Any stationary \sas random field generated by this nonsingular action satisfies $\liminf_{n\rightarrow \infty} b_n/|E_n|^{1/\alpha}>0$ even though the action is conservative.

\begin{example}\label{eg2} The boundary $\d G$ of the group $G$ consists of all infinite length reduced words made of powers of symbols from the generating set $D$.  Given a group element $g\in G\setminus \{e\}$, define $H_g (\subset \d G)$ to be the cylinder set consisting of all infinite words starting with $g$, i.e.,
\[
H_g=\{\omega\in \d G:[\omega]_{|g|}=g\},
\]
where $[\omega]_n$ represents the element in $G$ formed by the first $n$-length segment of $\omega$. Define $\mathcal{S}$ to be the $\sigma$-field on $S=\d G$ generated by the cylinder sets $H_g$, $g \in G \setminus \{e\}$. It is easy to check that there exists unique probability measure $m$ on $(S, \mathcal{S})$ such that $$m(H_g)=\frac{1}{2d(2d-1)^{|g|-1}} \quad \mbox{for all \ \ } g\in G\setminus e.$$
This measure is known as the Patterson Sullivan measure (see \cite{quint:2006}) and it turns $S=\d G$ into a Furstenberg-Poisson boundary (see \cite{tamuz:2016}) of the group $G$.

The free group $G$ acts canonically on $(S, \mathcal{S}, m)$ in a nonsingular fashion by
\begin{equation}\label{bdryactn}
\phi_t(\omega)=t^{-1} \cdot \omega, \quad \mbox{for \ \ } t\in G,\,\omega \in S,
\end{equation}
where $\cdot$ is the left-concatenation of a finite word with an infinite word followed by reduction. The Radon-Nikodym derivatives of this action are given by
$$\frac{dm\circ \phi_t}{dm}(\omega)=(2d-1)^{-B_\omega(t)}, \quad t\in G,\,\omega \in S,$$
where $B_\omega (t)=|t|-2|t\wedge \omega|$ (the \emph{Busemann function} associated with $\omega$) with $t\wedge \omega$ being the \emph{longest common initial segment} (also known as the \emph{confluent}) of $t$ and $\omega$. For further details on the boundary action, we refer the reader to \cite{Grigorchuk:2012}, where it was established that this action is conservative.

We shall first show that the boundary action is null, i.e., its positive part (in the Neveu decomposition) is empty modulo $m$. To this end, consider the generating set $D=\{a_1,a_1^{-1},a_2,a_2^{-1},\ldots,a_d,a_d^{-1}\}$ of $G$ with $d$ independent symbols and their inverses as before. Then take the cylinder set $H_{a_1^{-1}}$ and consider the sets $\phi_t(H_{a_1^{-1}})$ for $t=e$ and $t=a_1^{-1}x a_1^{-k}$, where $k=0,1,2,\ldots$ and $x \in D\setminus \{a_1,a_1^{-1}\}$. It is easy to see that, for all such $t$, $\phi_t(H_{a_1^{-1}})$ are disjoint and their union is $\d G$. This shows that  $H_{a_1^{-1}}$ is a weakly wandering set, and hence the boundary action is null.

Define $f_t$ by \eqref{eq1} with the constant function $f\equiv 1$ on S, the trivial cocycle $c_t \equiv 1$ for all $t \in G$,  and the boundary action \eqref{bdryactn}. Then $\{X_t\}_{t \in G}$ defined by the integral representation \eqref{integrep} is a stationary \sas random field generated by a conservative null action. We now claim that
\begin{equation}
F_n(\omega):=\max_{t\in E_n}|f_t(\omega)|^\alpha=\max_{t\in E_n}\frac{dm\circ \phi_t}{dm}(\omega)=\max_{t\in E_n}(2d-1)^{-B_\omega(t)}=(2d-1)^{n}\label{eq:series_F_n}
\end{equation}
for all $\omega \in S$. We need to show the last equality above. As $t\in E_n$, for any $\omega \in S$, the length of the confluent $|t\wedge \omega| \leq |t| \leq n$. Hence
$$-n\leq B_\omega(t) \leq n \quad \mbox{for all}\quad t\in E_n,\,\omega \in S.$$
Hence $F_n(\omega)\leq (2d-1)^n$. To see the other inequality, note that, for any fixed $\omega \in  S=\d G$, if we take $g=[\omega]_n\in C_n \subset E_n$, then $|g\wedge \omega|=n$, so that
$$B_\omega (g)=|g|-2|g\wedge \omega|=n-2n=-n.$$
Hence, $(2d-1)^{-B_\omega(g)}=(2d-1)^n$, so that $F_n(\omega)\geq (2d-1)^n$. This proves \eqref{eq:series_F_n}.

Hence,
\[(b_n(f))^\alpha=\int_{S} F_n(\omega) m(d\omega)=(2d-1)^n, \quad \mbox{and,}\]
\[\lim_{n\rightarrow \infty} \frac{(b_n(f))^\alpha}{|E_n|}=\lim_{n\rightarrow \infty} \frac{(b_n(f))^\alpha}{\frac{d}{d-1}(2d-1)^n}=\frac{d-1}{d}>0.\]
Following the arguments in the proof of Theorem 4.1 in \cite{samorodnitsky:2004a}, one has
$$\frac{M_n}{(2d-1)^{n/\alpha}}\Rightarrow \mathfrak{C}_\alpha^{1/\alpha}Z_\alpha,$$ where $Z_\alpha$ is a standard $\alpha$-Fr\'{e}chet random variable defined in \eqref{Frechet}.

Our next claim is that the degenerate part $\BB$ of the boundary action is an $m$-null set. To establish this, take the cylinder set $H_a$ for any $a$ in the generating set $D$, and the set $T_n=\{a\cdot g : g\in C_{n-1}\}$. Clearly $T_n\subset E_n$ as $|a\cdot g|\leq |a|+|g|=n$, and
\[\{\phi_t(H_a)\}_{t\in T_n}=\{H_{g^{-1}a^{-1}a}:g\in C_{n-1}\}=\{H_g:g\in C_{n-1}\}\] are all the cylinder sets of ``dimension'' $n-1$ which are all disjoint. Since $|T_n|=|C_{n-1}|\sim \frac{2d-2}{(2d-1)^{2}}|E_n|$ as $n \to \infty$, by Theorem~\ref{setsinA}, we have, $H_a \subseteq \AA$. As this happens for all symbols $a\in D$, and the union of the cylinder sets over all $a\in D$ is $S$, one gets that the nondegerate part $\AA=S$ modulo $m$.
\end{example}
\begin{remark} Note that the boundary action defined here differs slightly from that defined in \cite{Grigorchuk:2012}, where the authors define
\[\phi_t(\omega)=t \cdot \omega, \quad \mbox{for \ \ } t\in G,\,\omega \in S.\]
We use the definition in \eqref{bdryactn} so as to match with our convention for group actions used in this paper, i.e, $\phi_{u\cdot v} = \phi_v  \circ \phi_u$ for all $u, v \in G$. This adjustment does change the Radon-Nikodym derivatives but does not compromise the nonsingulatrity (or the conservativity) of the action.
\end{remark}

The above example shows that there exist stationary \sas random fields generated by conservative null actions, for which the maxima grows at the rate of $(2d-1)^{n/\alpha}$. But this is not necessarily the case for all such actions. The next example shows that there exists a stationary \sas random field generated by a null conservative action, for which the maxima sequence grows at a strictly smaller rate.
\begin{example}\label{eg3} Let the free group $G$ of rank $d \geq 2$ be generated by the set $D$ as in Example~\ref{eg2}.  Take $S=\R$ with $m=$ Lebesgue measure, and the group action $\{\phi_t\}_{t \in G}$ to be the one that makes a shift of $1$ by the action of $a_1$ and is fixed by the actions of $a_2, a_3, \ldots a_d$. In other words, for all $i =1, 2, \ldots, d$,
\[
\phi_{a_i}(x) = x + \ind_{\{i=1\}}, \; x \in \mathbb{R}.
\]
This group action is clearly measure preserving. Therefore one has $(b_n(\ind_{(0,1]}))^\alpha=\mbox{Leb}((-n, n+1])=2n+1$,
and hence
$(b_n(\ind_{(0,1]}))^\alpha/|E_n|\rightarrow 0$ as $n\rightarrow \infty$.

Again, the set $(0,1]$ is weakly wandering, as $\phi_t((0,1])$ for $t=a_1^k$, $k\in \Z$ are all disjoint, and their union is the whole set $\R$. As the set $(0,1] \subseteq \BB$ (the degenerate part - recall Theorem \ref{decomp}) and $\BB$ is $\phi_t$-invariant, it contains all translates $\{\phi_t((0,1]),t=a_1^k\}$, and hence $\BB=\R$. Hence this action is conservative, null and yet degenerate. By Theorem~\ref{shiftofboundary}, for any stationary \sas random field generated by this action, the partial maxima satisfies $M_n/(2d-1)^{n/\alpha} \overset{\P}{\rightarrow} 0$.
\end{example}

\section{Dissipative case: point process and maxima}\label{Disscase}

We would like to begin this section by observing that the representations \eqref{ma} and \eqref{series} can be generalized to any countable group $G$, not just $\mathbb{Z}^d$. More specifically, one can establish that for any countable group $G$, a stationary \sas random field $\{X_t\}_{t \in G}$ is generated by a dissipative $G$-action if and only if it has a mixed moving average representation of the form
\begin{eqnarray}\label{ourma}
X_t \overset{\text{fdd}}{=}  \int_{W\times G} f(w,t^{-1}s)dM(w,s), \; t \in G,
\end{eqnarray}
where $M$ is an \sas random measure on $W \times G$ with control measure $\nu \otimes \zeta $, and $\nu$ is a $\sigma$-finite measure on the measurable space $(W,\mathcal{W})$, $\zeta$ is the counting measure on the group $G$, and $f \in \mathcal{L}^\alpha(W\times G,\nu\otimes \zeta)$ (as mentioned in Section~\ref{BG}, this terminology was introduced in \cite{surgailis:rosinski:mandrekar:cambanis:1993}). See \cite{roy:2008}, where the argument is given for any countable abelian group extending the works of \cite{Rosinski:2000} and \cite{roy:samorodnitsky:2008}. With a little bit of care (about the side of multiplication, etc.), such an argument can be carried forward to any countable group, not necessarily abelian. As in Section~\ref{BG}, taking $\nu_\alpha$ as the symmetric measure on $[-\infty,\infty]\setminus\{0\}$ satisfying $\nu_\alpha(x,\infty]=\nu_\alpha[-\infty, -x)=x^{-\alpha}$ for all $x>0$,
\begin{equation}\label{defofPRM}
N =\sum_{i}\delta_{(j_i,v_i,u_i)} \sim \mbox{PRM}(v_\alpha \otimes \nu \otimes \zeta )
\end{equation}
on $\big([-\infty,\infty]\setminus\{0\}\big)\times W\times G$, and dropping a factor of $\mathfrak{C}_\alpha^{1/\alpha}$, one can obtain the series representation
\begin{equation}\label{ourseries}
X_t \overset{\text{fdd}}{=} \sum_i j_if(v_i,t^{-1}u_i), \; t \in G.
\end{equation}

In this section, we shall assume that $G$ is a free group of finite rank $d \geq 2$ and study the weak limit of scaled point process and partial maxima sequences induced by a stationary \sas random field \eqref{ourseries} generated by a dissipative (and hence nondegenerate by Part (ii) of Theorem~\ref{decomp} above) action. Thanks to the nontrivial contributions (see, for instance, Theorem~\ref{boundarymatters} above) coming from the interior boundary $C_n$ of $E_n$ as a result of the non-amenability of $G$, these limits are different from those arising in the case of $\Z^d$. The class of point process limits that we obtain are completely novel and we have termed this new class as \emph{randomly thinned cluster Poisson processes}. We would like to mention once more that in case of $\Z^d$, Poisson cluster processes arise as limits and the ``random thinning'' phenomenon is absent; see \cite{Resnick:Samorodnitsky:2004}, \cite{Roy:2010}. We will state our results for the random field \eqref{ourseries} after defining various quantities that appear in the statement of the main theorem of this section.

\subsection{Construction of $\l$-subgraphs}

For each fixed $\l \in \Z$, we define a class of subgraphs of the Cayley graph of $G$ by specifying the set of vertices of each subgraph. We call them $\l$-subgraphs, and denote the set of all $\l$-subgraphs by $\Gamma_{\l}$. We shall consider three cases and in each case, we shall construct a typical $\l$-subgraph as described below. Recall that for $u,v\in G$, $d(u,v)$ denotes the graph distance between the vertices $u$ and $v$ in the Cayley graph of $G$, $v = d(v, e)$, and $C_n$ denotes the interior boundary of the ball $E_n$ of size $n$.

\noindent \textbf{Case 1: $\l=0$.}  Consider a self-avoiding path starting from the root $e$. Let the vertices along the path be $v_0=e, v_1, v_2,\ldots$, where $|v_k|=k$. For each such vertex $v_k$, we define a collection of sets of vertices $V_k$ by
\begin{equation}
V_k=\{t\in G: d(t,v_k) \leq k\}, \mbox{\ \ \ \ \ } k =0,1,2,\ldots\,. \label{defn:V_k}
\end{equation}
Note that $\{e\} =V_0\subset V_1 \subset V_2 \subset \ldots\, .$ A typical $\l$-subgraph (for $\l=0$) corresponding to a particular self-avoiding path $\{v_0=e, v_1, v_2,\ldots\}$ is defined as the union of all these sets of vertices $\cup_{i=0}^\infty V_i$. The collection of all such subgraphs corresponding to all self avoiding paths starting from the root $e$ is the set $\Gamma_0$.

\noindent \textbf{Case 2: $\l>0$.} Here we consider all self avoiding paths $ \{v_0, v_1, v_2,\ldots\}$ starting from some vertex $v_0\in C_{\l}$ that ``goes away from the root", i.e., $|v_0|=\l, |v_1|=\l+1,  |v_2|=\l+2$ and so on. For any such self avoiding path, define the collection of vertices $V_k$ by \eqref{defn:V_k}, and we have the corresponding typical $\l$-subgraph as $\cup_{i=0}^\infty V_i$. The collection of all such subgraphs is denoted by $\Gamma_\l$.

%\begin{figure}[h]
 %\centering
 %\includegraphics[width=0.95\textwidth]{subgraphs}
 %\caption{For $G=\Z *\Z$ with $S=\{a,b,a^{-1},b^{-1}\}$, the $\l$-subgraphs corresponding to $\tilde{f}^{(\l)}$ for %$\l=0,-1,1$}
   % \label{fig:subgraph}
%\end{figure}

\noindent \textbf{Case 3: $\l<0$.}  For any fixed $\l <0$, consider all self avoiding paths $ \{v_0, v_1, v_2,\ldots\}$ starting from some vertex $v_0\in C_{|\l|}$ such that $|v_0|=|\l|, |v_1|=|\l|-1, |v_2|=|\l|-2, \ldots, v_{|\l|}=e, |v_{|\l|+1}|=1, |v_{|\l|+2}|=2$ and so on. Given such a path, we define, once again, the corresponding $\l$-subgraph to be $\cup_{i=0}^\infty V_i$, where $V_k$ is as in \eqref{defn:V_k}. The collection of all such subgraphs corresponding to all self avoiding paths is our $\Gamma_\l$.

Given $g \in \mathcal{L}^\alpha(W\times G,\nu\otimes \zeta)$, $\l \in \Z$ and $\xi \in \Gamma_\l$, we define functions $\tilde{g}^{(\l,\xi)}$ by appropriately thinning the function $g$ to the $\l$-subgraph $\xi \in \Gamma_\l$, i.e.,
\begin{equation}
\tilde{g}^{(\l,\xi)}(w,t)=g(w,t)\mathds{1}_{\{t\in \xi\}}, \; w \in W,\, t \in G. \label{defn:thin:g}
\end{equation}

\subsection{An all-encompassing Poisson random measure} Next we shall describe for each $\l \in \mathbb{Z}$, a probability measure $\gamma_\l$ on the set $\Gamma_\l$ of all $\l$-subgraphs as a ``uniform measure on all $\l$-subgraphs''. We shall construct these by resorting to Kolmogorov consistency theorem. To this end, first fix $\l \in \Z$. For any $m \in \N$, we say that two $\l$-subgraphs are \emph{$m$-essentially distinct} if the two subgraphs when restricted to $E_m$ are distinct. We denote, by $\Gamma_\l^{(m)}$, the finite set of all $m$-essentially distinct $\l$-subgraphs.

Define $X=\{1,2,\ldots,2d\}$. We claim that for each $(\l, m) \in \mathbb{Z} \times \mathbb{N}$, the set $\Gamma_\l^{(m)}$ can be embedded into $C_{|\l|} \times X^{m-\l-1}$. To see this, note that any two essentially distinct subgraphs in $\Gamma_\l^{(m)}$ will necessarily correspond to two distinct (self avoiding) paths of length $m-\l$ starting from some vertex in $C_{|\l|}$. (But the path associated to such a subgraph may not be unique, in that case, we just choose any one of the associated paths. However, for any two distinct subgraphs, any two corresponding paths associated to them will necessarily be distinct.) And since the degree of each vertex in $G$ is $2d$, any such path is an element of $C_{|\l|} \times X^{m-\l-1}$. Similarly, $\Gamma_\l$ can be embedded into $C_{|\l|} \times X^\infty$.

Once again, fix $\l \in \mathbb{Z}$. Now suppose that $\gamma_\l^{(m)}$ is the uniform distribution on $\Gamma_\l^{(m)}$ (embedded in $C_{|\l|} \times X^{m-\l-1}$). Then clearly $\{\gamma_\l^{(m)}\}_{m \geq 1}$ is a consistent system of probability measures. Therefore by Kolmogorov consistency theorem, we get a unique probability measure $\gamma_\l$ on $\Gamma_\l$ (embedded in $C_{|\l|} \times X^\infty$), such that $\gamma_\l $ restricted to $\Gamma_\l^{(m)}$ is $\gamma_\l^{(m)}$ for each $m \in \mathbb{N}$. Now that we have defined the sets of $\l$-subgraphs $\Gamma_\l$ and the measures $\gamma_\l$ on them, we consider the product probability space
$$\left(\Gamma=\prod_{\l \in \mathbb{Z}} \Gamma_\l, \, \gamma = \bigotimes_{\l \in \mathbb{Z}} \gamma_\l \right).$$
And as each $\Gamma_\l$ is embedded in a compact separable metric space, so is their product $\Gamma$. In particular, $\Gamma$ is locally compact and separable.

We define a sequence of i.i.d.\ $\Gamma$-valued random variables $\mathbf{r}_i=\big(r_{i,\l}: \l \in \mathbb{Z}\big)$, $i \in \mathbb{N}$ with common law $\gamma$ and independent of the Poisson point process $N$ defined in \eqref{defofPRM}.  We also take a collection of i.i.d.\  integer-valued random variables $s_i$, $i \in \mathbb{N}$ independent of $N$ and $\{\mathbf{r}_i\}_{i \in \mathbb{N}}$, and distributed according to the probability measure $\mu$ on $\mathbb{Z}$ defined by
\begin{eqnarray}\label{def_of_mu}
\mu(\{k\})=\left \{\begin{array}{ll}
                              2d(2d-1)^{k-1}\left(\frac{d-1}{d}\right)  & \mbox{if $k=0,-1,-2,\ldots\,,$}\\
                              0                                                                & \mbox{otherwise.}
                         \end{array}
                    \right.
\end{eqnarray}
By Proposition~3.8 of \cite{resnick:1987},
\begin{equation}\label{M*}
M=\sum_i \delta_{(j_i,v_i,u_i,s_i,\mathbf{r}_i)} \sim \mbox{PRM}(v_\alpha \otimes \nu \otimes \zeta \otimes \mu \otimes \gamma)
\end{equation}
on $\big([-\infty,\infty]\setminus\{0\}\big)\times W\times G \times \mathbb{Z} \times \Gamma$.

\subsection{The weak convergence results} Let $\mathcal{M}$ be the space of all Radon measures on $[-\infty,\infty]\setminus\{0\}$ equipped with the vague topology. Since $|E_n|=\Theta((2d-1)^n)$, one expects $(2d-1)^{-n/\alpha}$ to be the correct scaling in this case. As we shall see, the partial maxima sequence \eqref{def_M_n_general} grows in this rate as well. Define the function $f'\in \mathcal{L}^\alpha(W\times G,\nu\otimes \zeta)$ based on $f$, as
$$f'(v,t)=f(v, t^{-1}) \mbox{\ \ \ for all\ \ \ } v\in W, t \in G.$$
Using \eqref{defn:thin:g}, define for each $\l \in \Z$ and for each $\xi \in \Gamma_\l$, the function $\tilde{f'}^{(\l,\xi)}$ on $W \times G$ by
$$\tilde{f'}^{(\l,\xi)}(w,t)=f'(w,t)\mathds{1}_{\{t\in \xi\}} = f(w, t^{-1})\mathds{1}_{\{t\in \xi\}}.$$
With these notations and machineries, we can now state the main theorem of this section. See Section~\ref{DP} for the proofs of all the results stated in this section.

\begin{thm}\label{maintheorem}
Let $\{X_t\}_{t \in G}$ be the mixed moving average given in \eqref{ourseries} , and define the sequence of point processes
\begin{equation}
N_n:=\sum_{t \in E_n} \delta_{(2d-1)^{-n/\alpha}X_t},\quad n=1,2,\ldots\,. \label{defn:N_n:gen}
\end{equation}
Then $N_n \Rightarrow N_*$ (as $n \rightarrow \infty$) weakly in the space $\mathcal{M}$, where $N_*$ is a randomly thinned cluster Poisson random measure with representation
\begin{equation}{\label{Nstar2}}
  N_*  =
      \sum_{i=1}^\infty \sum_{k \in G}\delta_{j_i\tilde{f'}^{\left(|u_i|, r_{i,|u_i|}\right)}(v_i,k)}\mathbf{1}_{(u_i \neq e)} + \sum_{i=1}^\infty \sum_{k \in G}\delta_{\left(\frac{d}{d-1}\right)^{1/\alpha} j_i\tilde{f'}^{\left(s_i, r_{i,s_i}\right)}(v_i,k)}\mathbf{1}_{(u_i =e)}.
\end{equation}
Here $j_i,v_i,u_i,s_i,r_i$ are as in \eqref{M*}. Furthermore $N_*$ is Radon on $[-\infty,\infty]\setminus\{0\} $ with Laplace functional
\begin{eqnarray}\label{lap2}
\E \big(e^{-N_*(g)}\big) = \exp\Bigg\{-\iint\left(\sum_{\l \in \mathbb{Z}}\frac{2d}{(2d-1)^{1-\l}}\int\left(1-e^{-\sum_{k\in G}g(x\tilde{f'}^{(\l,\xi)}(v,k))}\right)\gamma_\l(d\xi)\right)\nonumber\\
\nu_\alpha(dx)\nu(dv)\Bigg\},
\end{eqnarray}
for any nonnegative measurable function $g$ defined on $[-\infty,\infty]\setminus\{0\}$.
\end{thm}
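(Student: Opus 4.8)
The plan is to establish $N_n \Rightarrow N_*$ in $\mathcal{M}$ by realising both sides through Poisson random measures and reducing the convergence to that of the ``large jumps'', while showing the ``small jumps'' are asymptotically negligible; this is the scheme used for $\Z^d$ in \cite{Resnick:Samorodnitsky:2004, Roy:2010, samorodnitsky:2004a}, but the non-amenable geometry forces two genuinely new ingredients. Throughout I would use the series representation \eqref{ourseries} rewritten via $f'(v,s)=f(v,s^{-1})$ as $X_t \overset{\text{fdd}}{=}\sum_i j_i f'(v_i,u_i^{-1}t)$, so that the Poisson point $(j_i,v_i,u_i)$ produces a cluster sitting at the locations $t=u_ik$ with $k\in u_i^{-1}E_n$ and carrying the values $\{(2d-1)^{-n/\alpha}j_i f'(v_i,k):k\in u_i^{-1}E_n\}$.

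\textbf{Step 1 (single-large-jump reduction).} Fix $\eps>0$ with $g$ supported in $\{|x|>\eps\}$ and a threshold $\delta>0$, and discard every Poisson point with $|j_i|\le \delta(2d-1)^{n/\alpha}$. Using $\alpha<2$, $f\in\mathcal{L}^\alpha(W\times G,\nu\otimes\zeta)$, and a maximal-term estimate as in the lattice case, I would show that the discarded points neither push any $X_t$ across the level $\eps(2d-1)^{n/\alpha}$ nor make two retained clusters share a common location $t$, so that $N_n(g)$ agrees with its truncated analogue up to an error that vanishes as $n\to\infty$ and then $\delta\to0$. After this reduction each surviving value is of the form $(2d-1)^{-n/\alpha}j_i f'(v_i,k)\mathbf{1}_{\{k\in u_i^{-1}E_n\}}$ coming from a single large point.

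\textbf{Step 2 (emergence of the $\l$-subgraphs).} This is the crux and the step I expect to be the main obstacle. Since $f'(v_i,\cdot)$ is concentrated near $e$, a surviving cluster is governed by the local shape of the translated ball $u_i^{-1}E_n$ in a bounded neighbourhood of $e$; by the tree identity $|u_ik|=|u_i|+|k|-2|u_i^{-1}\wedge k|$, the condition $k\in u_i^{-1}E_n$ reads $|k|-2|u_i^{-1}\wedge k|\le n-|u_i|$, which is precisely the vertex set of an $\l$-subgraph, with $\l$ fixed by the signed depth $n-|u_i|$ and the type (Cases~1--3) fixed by the direction of the geodesic from $e$ towards $u_i^{-1}$. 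Using the exact counts $|C_m|=2d(2d-1)^{m-1}$ and the homogeneity of the $2d$-regular tree, I would show that after rescaling $|j_i|$ by $(2d-1)^{n/\alpha}$ the retained triples (rescaled value, mark $v_i$, localised ball-shape) converge to a Poisson random measure in which the shape is an $\l$-subgraph with conditional law $\gamma_\l$ and the index $\l$ carries total weight $2d(2d-1)^{\l-1}$. Matching this weight to $|C_\l|$ for $\l\ge1$ realises the $u_i\ne e$ sum of \eqref{Nstar2}, while for $\l\le 0$ it is produced by the law $\mu$ of \eqref{def_of_mu} on the single point $u_i=e$ together with the value rescaling $(\frac{d}{d-1})^{1/\alpha}$, the factor $\frac{d}{d-1}$ being forced by the normalisation $|E_n|\sim\frac{d}{d-1}(2d-1)^n$ against the chosen scaling $(2d-1)^{-n/\alpha}$.

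\textbf{Step 3 (identification, Laplace functional, Radonicity).} Given the convergence of the driving Poisson input, the mapping theorem for Poisson cluster processes (\cite{resnick:1987}, Prop.~3.8, applied to $M$ in \eqref{M*}) yields $N_n\Rightarrow N_*$ with the representation \eqref{Nstar2}, and the Laplace functional \eqref{lap2} then follows from the exponential formula for $M\sim\mathrm{PRM}(\nu_\alpha\otimes\nu\otimes\zeta\otimes\mu\otimes\gamma)$ after integrating out $\xi\sim\gamma_\l$ and collecting the $\l$-weights $\frac{2d}{(2d-1)^{1-\l}}$. Finally, to check that $N_*$ is Radon I would estimate, for fixed $\eps>0$,
\[
\E\,N_*(\{|x|>\eps\})\le 2\eps^{-\alpha}\sum_{k\in G}\Big(\sum_{\l\in\Z}\frac{2d}{(2d-1)^{1-\l}}\,\gamma_\l(\{\xi:k\in\xi\})\Big)\int_W |f'(v,k)|^\alpha\,\nu(dv),
\]
where the $\nu_\alpha$-integral has produced the factor $|f'(v,k)|^\alpha$. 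Here Lemma~\ref{lemma:vertex_counting} supplies the sharp bound on the $\gamma_\l$-weighted number of $\l$-subgraphs through a given vertex, controlling the inner bracket uniformly in $k$ and taming the non-amenable growth; finiteness then reduces to $\sum_{k}\int_W|f'(v,k)|^\alpha\,\nu(dv)=\|f\|_\alpha^\alpha<\infty$, which holds since $f\in\mathcal{L}^\alpha(W\times G,\nu\otimes\zeta)$.
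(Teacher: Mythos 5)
Your proposal is correct and takes essentially the same route as the paper's proof: the Resnick--Samorodnitsky reduction of $N_n$ to the superposition of single-Poisson-point clusters, the identification of the translated balls $u^{-1}E_n$ with $\ell$-subgraphs via the tree metric (the shells $C_{n+\ell}$ producing, by symmetry, the law $\gamma_\ell$ with weight $2d(2d-1)^{\ell-1}$, and the interior vertices producing the $u_i=e$ part with the mark law $\mu$ and the $\left(\frac{d}{d-1}\right)^{1/\alpha}$ rescaling), and Radonness of $N_*$ through exactly the sharp vertex count of Lemma~\ref{lemma:vertex_counting} applied to $\gamma_\ell(\{\xi : k \in \xi\})$. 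The remaining differences are organizational rather than substantive: the paper first truncates $f$ to compact support in the second variable and computes the Laplace functional of the superposed process directly, instead of passing through convergence of the driving marked Poisson input followed by a mapping-theorem step.
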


As mentioned earlier, in case of $G=\mathbb{Z}^d$, the thinning of the function $f$ is absent due to amenability of the group. Note that in the above limit, index $(\l, \xi) \in \mathbb{Z} \times \Gamma_\l$ of the thinned function $\tilde{f'}$ becomes random. That is why we have come up with the term  \emph{randomly thinned cluster Poissson process} for the limiting point process $N_*$. We can use the convergence of the point process to get the weak convergence of partial maxima $M_n$ scaled by$ (2d-1)^{n/\alpha}$. The limit is a positive constant times the standard $\alpha$-Fr\'{e}chet distribution and the constant is, not surprisingly, much more sophisticated and involved compared to the corresponding one in case of $\mathbb{Z}^d$ obtained in \cite{samorodnitsky:2004a} and \cite{roy:samorodnitsky:2008}.

\begin{cor}\label{maxima}
Let $M_n$ be as in \eqref{def_M_n_general}. Then
$$\frac{1}{(2d-1)^{n/\alpha}}M_n \Rightarrow \mathfrak{C}_\alpha^{1/\alpha}K_X Z_\alpha,$$
where $Z_\alpha$ is a standard $\alpha$-Fr\'{e}chet random variable, $\mathfrak{C}_\alpha$ is the stable tail constant given in ~\eqref{sttail}, and
$$K_X=\left(\sum_{\l \in \mathbb{Z}} (2d)(2d-1)^{\l-1}\int_W \int_{\Gamma_\l} {2\left(\sup_{k\in G} |\tilde{f'}^{(\l,\xi)}(v,k)|\right)^\alpha}\gamma_\l(d\xi)\nu(dv)\right)^{1/\alpha} \in (0, \infty).$$
\end{cor}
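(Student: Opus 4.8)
The plan is to read off the limiting law of the scaled maxima directly from the point-process convergence $N_n \Rightarrow N_*$ of Theorem~\ref{maintheorem}, using the elementary fact that the maximal modulus among the atoms of $N_n$ is exactly $(2d-1)^{-n/\alpha}$ times the partial maximum of the field $\{X_t\}$ in \eqref{ourseries}. Concretely, for each fixed $x>0$ I would set $A_x := \{y \in [-\infty,\infty]\setminus\{0\} : |y| > x\}$, so that
$$\left\{(2d-1)^{-n/\alpha}\max_{t\in E_n}|X_t| \le x\right\} = \left\{N_n(A_x) = 0\right\}.$$
Thus it suffices to compute the limit of the void probabilities $P(N_n(A_x)=0)$, and then to reinsert at the very end the factor $\mathfrak{C}_\alpha^{1/\alpha}$ that was dropped before \eqref{ourseries}.

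The first step is the transfer of the void event from $N_n$ to $N_*$. The set $A_x$ is relatively compact in $[-\infty,\infty]\setminus\{0\}$ with boundary $\partial A_x = \{-x,x\}$, so the functional $\mu \mapsto \mu(A_x)$ is continuous at every $\mu$ with $\mu(\partial A_x)=0$. I would verify that $N_*(\{\pm x\})=0$ almost surely for every $x>0$: the intensity (mean) measure of $N_*$ is obtained by pushing the diffuse measure $\nu_\alpha$ forward through the linear maps $r \mapsto r\,\tilde{f'}^{(\l,\xi)}(v,k)$, hence is itself diffuse on $[-\infty,\infty]\setminus\{0\}$, so $\E\big(N_*(\{x,-x\})\big)=0$. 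Consequently $N_n(A_x) \Rightarrow N_*(A_x)$ as $\Z_{\ge 0}$-valued variables (see, e.g., \cite{resnick:1987}), and since $0$ is a continuity point of the limit law, $P(N_n(A_x)=0) \to P(N_*(A_x)=0)$ for every $x>0$.

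The second step computes this void probability from the Laplace functional \eqref{lap2} by monotone approximation: taking $g = c\,\mathbf{1}_{A_x}$ and letting $c \uparrow \infty$, one has $\E\big(e^{-c N_*(A_x)}\big) \to P(N_*(A_x)=0)$ by bounded convergence, while inside \eqref{lap2} the integrand $1-\exp\{-\sum_{k}c\,\mathbf{1}_{A_x}(r\,\tilde{f'}^{(\l,\xi)}(v,k))\}$ increases to $\mathbf{1}_{\{|r|\sup_k|\tilde{f'}^{(\l,\xi)}(v,k)|>x\}}$ (here $r$ denotes the $\nu_\alpha$-variable), so by monotone convergence
$$P(N_*(A_x)=0) = \exp\left\{-\iint \sum_{\l\in\Z}\tfrac{2d}{(2d-1)^{1-\l}}\int \mathbf{1}_{\{|r|\sup_k|\tilde{f'}^{(\l,\xi)}(v,k)|>x\}}\,\gamma_\l(d\xi)\,\nu_\alpha(dr)\,\nu(dv)\right\}.$$
Using $\nu_\alpha\{r:|r|>a\}=2a^{-\alpha}$, the inner $\nu_\alpha$-integral equals $2x^{-\alpha}\big(\sup_k|\tilde{f'}^{(\l,\xi)}(v,k)|\big)^\alpha$; factoring out $x^{-\alpha}$ turns the exponent into exactly $-x^{-\alpha}K_X^\alpha$, with $K_X$ as in the statement, whence $P(N_*(A_x)=0)=\exp(-K_X^\alpha x^{-\alpha})=P(K_X Z_\alpha \le x)$ by \eqref{Frechet}. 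This already yields $(2d-1)^{-n/\alpha}\max_{t\in E_n}|X_t| \Rightarrow K_X Z_\alpha$ for the series field \eqref{ourseries}. Since the genuine S$\alpha$S field is $\mathfrak{C}_\alpha^{1/\alpha}$ times \eqref{ourseries}, the maximum $M_n$ of the true field is $\mathfrak{C}_\alpha^{1/\alpha}$ times the series maximum, giving the claimed limit $\mathfrak{C}_\alpha^{1/\alpha}K_X Z_\alpha$; finiteness $K_X<\infty$ is precisely the Radonness of $N_*$ from Theorem~\ref{maintheorem} (which rests on the counting bound of Lemma~\ref{lemma:vertex_counting}), and $K_X>0$ holds because $f$ is not $\nu\otimes\zeta$-a.e.\ zero.

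The only genuinely delicate point, once Theorem~\ref{maintheorem} is in hand, is the transfer step: one must know that $N_*$ charges no point of the two-point boundary $\{\pm x\}$ almost surely, and crucially that this holds for \emph{every} $x>0$ rather than for Lebesgue-a.e.\ $x$, since only then does the convergence of distribution functions at all positive $x$ pin down the full (continuous) law of the $\alpha$-Fr\'echet limit. The diffuseness of the intensity measure of $N_*$ supplies this, and the subsequent void-probability computation together with the bookkeeping of the $\mathfrak{C}_\alpha^{1/\alpha}$ factor is then routine.
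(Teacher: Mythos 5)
Your proposal is correct and follows essentially the same route as the paper's own proof: transfer the void probability of the set $\{|y|>s\}$ from $N_n$ to $N_*$ via the point-process convergence of Theorem~\ref{maintheorem} (using relative compactness and a.s.\ non-charging of the boundary), evaluate $\P(N_*(\cdot)=0)$ by sending $t\to\infty$ in the Laplace functional \eqref{lap2} with $g=t\mathbf{1}_{\{|y|>s\}}$, and use the scaling $\nu_\alpha\{|r|>a\}=2a^{-\alpha}$ to identify the exponent as $-K_X^\alpha s^{-\alpha}$, with finiteness of $K_X$ inherited from the Radonness argument. Your explicit verification that the intensity measure of $N_*$ is diffuse and your bookkeeping of the dropped factor $\mathfrak{C}_\alpha^{1/\alpha}$ are slightly more careful than the paper's (which asserts the former and leaves the latter implicit), but these are refinements of the same argument, not a different one.
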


\subsection{A special case with level symmetry} The above theorem takes a particularly simple form if we assume a level symmetry assumption on the function $f$, i.e., if for each $v \in W, t\in G$,
\begin{equation}\label{assumption on f}
f(v,t)=q(v,|t|),
\end{equation}
for some function $q$ on $W \times \mathbb{N}$. For each $\l \in \mathbb{Z}$, fix $\xi_\l \in \Gamma_{\l}$. Observe that by level symmetry, the thinned functions $\tilde{f'}^{(\l,\xi_{\l})}$ and $\tilde{f}^{(\l,\xi_{\l})}$ are equal. We abuse the notation slightly and denote both of these functions by $\tilde{f}^{(\l)}$. To clarify the intriguing but ultimately complex structure of the limiting point process obtained in this section, we present pictures of the $\l$-subgraphs (see Figure~\ref{fig:Subgraph}) corresponding to $\tilde{f}^{(\l)}$ for $\l=0,-1,1$ when $G=\Z*\Z$ is a free group of rank $d=2$ and $f$ satisfies the level symmetry assumption \eqref{assumption on f}. These pictures and the corollary below illustrate what random thinning means in this special case.

\begin{figure}[h]
    \centering
    \includegraphics[width=0.85\textwidth]{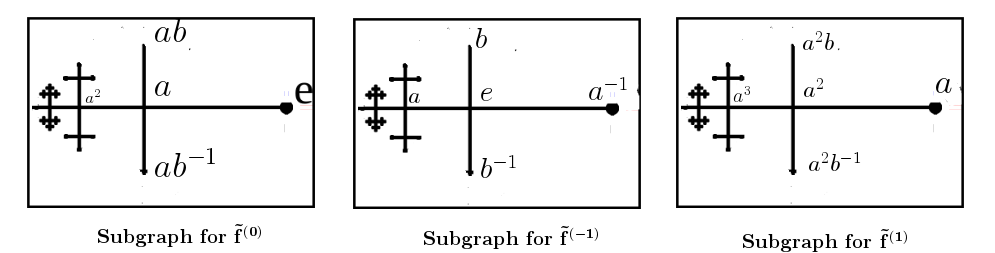}
    \caption{The $\l$-subgraphs corresponding to $\tilde{f}^{(\l)}$ for $\l=0,-1,1$}
    \label{fig:Subgraph}
\end{figure}

\begin{cor}\label{simplethm}
Let $\{X_t\}_{t \in G}$ be the mixed moving average given in \eqref{ourseries}, where f satisfies the assumption given in ~\eqref{assumption on f}. Define the sequence of point processes $N_n$ by \eqref{defn:N_n:gen}. Then
\begin{equation}{\label{Nstar}}
  N_n \Rightarrow N_* := \sum_{i=1}^\infty \sum_{t \in G}\delta_{j_i\tilde{f}^{(|u_i|)}(v_i,t)}  \mathbf{1}_{(u_i \neq e)}  + \sum_{i=1}^\infty \sum_{t \in G}\delta_{\left(\frac{d}{d-1}\right)^{1/\alpha} j_i\tilde{f}^{(s_i)}(v_i,t)}\mathbf{1}_{(u_i =e)},
\end{equation}
where $j_i,v_i,u_i$ are as in \eqref{defofPRM}, and $\{s_i\}$ are distributed independent of $(j_i,v_i,u_i)$ according to the probability measure $\mu$ as defined in \eqref{def_of_mu}. $N_*$ is Radon on $[-\infty,\infty]\setminus\{0\} $ with Laplace functional
\begin{equation}\label{lap}
\E \big(e^{-N_*(g)}\big) =\exp\left\{-\iint\sum_{\l=-\infty}^{\infty}2d(2d-1)^{\l-1}\left(1-e^{-\sum_{t\in G}g(x\tilde{f}^{(\l)}(v,t))}\right)\nu_\alpha(dx)\nu(dv)\right\}.
\end{equation}
\end{cor}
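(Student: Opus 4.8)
The plan is to obtain Corollary~\ref{simplethm} as a direct specialization of Theorem~\ref{maintheorem}, the only real work being to show that the level symmetry assumption \eqref{assumption on f} renders the random choice of $\l$-subgraph irrelevant. First I would record the elementary fact that reduced word length is inversion-invariant, $|t^{-1}|=|t|$, so that under \eqref{assumption on f} we have $f'(v,t)=f(v,t^{-1})=q(v,|t^{-1}|)=q(v,|t|)=f(v,t)$; hence $f'=f$ and consequently $\tilde{f'}^{(\l,\xi)}=\tilde{f}^{(\l,\xi)}$ for every $\l\in\Z$ and $\xi\in\Gamma_\l$. This reduces both \eqref{Nstar2} and \eqref{lap2} to statements about $\tilde{f}^{(\l,\xi)}(v,t)=q(v,|t|)\mathbf{1}_{\{t\in\xi\}}$.

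The crux is the combinatorial claim that, for each fixed $\l\in\Z$, every $\xi\in\Gamma_\l$ has the same \emph{level profile} $L_\l(n):=\#\{t\in\xi:|t|=n\}$. I would prove this from the homogeneity of the $2d$-regular Cayley tree $T$: a root-preserving automorphism $\sigma$ of $T$ is an isometry fixing $e$, hence satisfies $|\sigma(t)|=|t|$ for all $t$, maps the balls $V_k$ to balls, and therefore carries $\l$-subgraphs to $\l$-subgraphs while preserving every level. Checking (separately for $\l=0$, $\l>0$, $\l<0$) that $\mathrm{Aut}(T,e)$ acts transitively on $\Gamma_\l$ — via transitivity on geodesic rays from $e$, on $C_{|\l|}$, and on ordered pairs of distinct neighbours of $e$ for the through-root paths — then yields that any two $\l$-subgraphs are related by a level-preserving automorphism, so $L_\l$ is independent of $\xi$. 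Granting this, for level-symmetric $f$ one has, as point measures on $[-\infty,\infty]\setminus\{0\}$, $\sum_{k\in G}\delta_{j_i\tilde{f}^{(\l,\xi)}(v_i,k)}=\sum_{n\geq 0}L_\l(n)\,\delta_{j_iq(v_i,n)}$, which does not depend on $\xi\in\Gamma_\l$; fixing a representative $\xi_\l$ I set $\tilde{f}^{(\l)}:=\tilde{f}^{(\l,\xi_\l)}$, matching the statement.

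With this invariance both conclusions follow immediately. In the representation \eqref{Nstar2}, each inner block $\sum_{k\in G}\delta_{j_i\tilde{f'}^{(\l,r_{i,\l})}(v_i,k)}$ is, as just observed, independent of the random subgraph $r_{i,\l}$, so the auxiliary i.i.d.\ variables $\mathbf{r}_i$ may be discarded and \eqref{Nstar2} collapses to \eqref{Nstar}. In the Laplace functional \eqref{lap2}, the integrand $1-e^{-\sum_{k}g(x\tilde{f'}^{(\l,\xi)}(v,k))}$ is constant in $\xi$, so integrating against the probability measure $\gamma_\l$ leaves it unchanged with $\tilde{f'}^{(\l,\xi)}$ replaced by $\tilde{f}^{(\l)}$; using $\tfrac{2d}{(2d-1)^{1-\l}}=2d(2d-1)^{\l-1}$ this is exactly \eqref{lap}. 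The Radon property of $N_*$ and all finiteness claims are inherited verbatim from Theorem~\ref{maintheorem}.

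The step I expect to be the main obstacle is the level-profile invariance: one must verify carefully, and case by case, that the three distinct constructions of $\Gamma_\l$ given above are genuinely homogeneous under root-fixing automorphisms — in particular that moving the base vertex within $C_{|\l|}$ and re-routing the emanating ray can always be realized by a single automorphism fixing $e$. Once this geometric fact is in place, everything else is routine bookkeeping on top of Theorem~\ref{maintheorem}.
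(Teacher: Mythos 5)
Your proposal is correct, and its skeleton matches the paper's: the reduction of the Laplace functional \eqref{lap2} to \eqref{lap} via $f'=f$ (since $|t^{-1}|=|t|$ under \eqref{assumption on f}), the $\xi$-independence of the exponent sum, and the fact that each $\gamma_\l$ is a probability measure are exactly the observations the paper makes in the remark preceding the corollary. Where you genuinely diverge is in how the representation \eqref{Nstar} of the limit is validated. The paper's proof recomputes the Laplace functional of the simplified point process \eqref{Nstar} from scratch, ``exactly as in the first part of the proof of Theorem~\ref{maintheorem}'' (i.e., repeating the PRM computation with the simplified $\psi$). You instead prove the stronger statement that \eqref{Nstar2} and \eqref{Nstar} coincide almost surely as random measures, via the level-profile identity $\sum_{k\in G}\delta_{j_i\tilde{f}^{(\l,\xi)}(v_i,k)}=\sum_{n\geq 0}L_\l(n)\,\delta_{j_iq(v_i,n)}$, so that the auxiliary subgraph marks $\mathbf{r}_i$ can be discarded outright and both the weak convergence and the Laplace functional are inherited with no further computation. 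Your justification of the level-profile invariance by transitivity of root-fixing automorphisms of the $2d$-regular Cayley tree on $\Gamma_\l$ (handled case by case for $\l=0$, $\l>0$, $\l<0$) is also a genuine addition: the paper's Lemma~\ref{lemma:vertex_counting} only counts vertices for $\l\in\N$, whereas the blocks with $u_i=e$ in \eqref{Nstar2} involve $\l=s_i\leq 0$ (the measure $\mu$ is supported on the nonpositive integers), so the paper's assertion that the exponent sum is the same for all $\xi\in\Gamma_\l$ is left implicit precisely in the range your automorphism argument covers uniformly. What each approach buys: the paper's route is shorter given that the machinery of Theorem~\ref{maintheorem} is already on the table; yours is structurally cleaner, identifies the two representations pathwise rather than only in law, and makes the ``random thinning becomes deterministic'' phenomenon transparent. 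One cosmetic point worth a sentence in a final write-up: both sides of your cluster identity involve atoms at $0$ (from $k\notin\xi$, or when $q(v_i,n)=0$), and one should note the standard convention that such atoms are discarded since the state space is $[-\infty,\infty]\setminus\{0\}$ (equivalently $g(0)=0$ in all Laplace-functional sums); this is harmless and is the same convention the paper uses.
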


Note that if we assume $f$ satisfies the level symmetry assumtion ~\eqref{assumption on f}, then it is easy to check that the Laplace functional given in ~\eqref{lap2} reduces to the one in ~\eqref{lap}. As observed earlier, $f'=f$, and the expression $\sum_{k \in G} g(x\tilde{f}^{(\l,\xi)}(v,k))$ in the exponent of the Laplace functional in \eqref{lap2} is the same for all $\xi \in \Gamma_\l$. So the inner integral in \eqref{lap2} does not depend on the subgraph $\xi$. Since $\gamma_\l$ is a probability measure, the rest follows.

Using $f \in \mathcal{L}^\alpha(W \times G, \nu \otimes \zeta)$ and \eqref{assumption on f}, for $\nu$-almost all $v \in W$, we define functions $h_v \in \mathcal{L}^\alpha(G,\zeta)$ as follows. If $\sup_{t \in G}|f(v,t)|$ is attained at $C_k$ for some $k$, then assign $h_v|_{C_\l}=f|_{\{v\} \times C_k}$ for all $0\leq \l\leq k$. Next, if $\sup_{t \notin E_k}|f(v,t)|$ is attained at $C_{k'}$ for some $k'>k$, then define $h_v|_{C_\l}=f|_{\{v\} \times C_{k'}}$ for all $(k+1)\leq \l\leq k'$, and so on. The constant $K_X$ in Corollary ~\ref{maxima} takes the following simple form under the assumption \eqref{assumption on f}:
\[
K_X^\alpha=\frac{2^\alpha}{d-1} \int_W L(v)^{\alpha}\, \nu(dv) + \int_W\|2 h_v\|_\alpha^\alpha\,\nu(dv),
\]
where $L(v):=\sup_{t\in G}|f(v,t)|$ and $\|g\|_\alpha=(\sum_{t\in G} |g(t)|^\alpha)^{1/\alpha}$ for any function $g\in \mathcal{L}^\alpha(G,\zeta)$. Note that the first term of $K_X^\alpha$ was present (up to a constant multiple) in case of $\mathbb{Z}^d$ (see \cite{samorodnitsky:2004a} and \cite{roy:samorodnitsky:2008}) but the second term is new and can be interpreted as the contribution of non-amenability (of the group $G$) to the clustering of the extremes of $\{X_t\}_{t \in G}$.

\subsection{Open problems} We would like to mention that the results in this paper give rise to a bunch of open problems some of which will perhaps be taken up as future directions by the authors. For instance, Gennady Samorodnitsky asked the following question in a personal communication with the second author: \emph{is it possible to characterize all finitely generated countable groups for which the degenerate-nondegenerate decomposition is different from the Hopf decomposition?} While we believe that this is perhaps a difficult question, it does open a Pandora's box full of open and interesting problems. For example, it may still be possible to partially answer this question by considering special cases and eventually giving various sufficient conditions on the group so that a new transition boundary is obtained in Theorem~\ref{decomp}.

Most of the works mentioned in the second paragraph of Section~\ref{sec:intro} have not been extended to the case of random fields generated by free groups. These can also lead to many intriguing open problems relating ergodic theory (of nonsingular actions of free groups) with probability theory (of tree-indexed random fields). The non-amenability of free groups would surely affect various stochastic properties of such fields as well and it would be fascinating to analyze them. In particular, construction and investigations of max-stable random fields indexed by trees will surely turn out to be important in spatial extremes.

Since nonsingular (also called quasi-invariant) actions arise naturally in the study of Lie groups, one can think of going beyond countable groups (and $\mathbb{R}^d$), and ask similar questions for stationary stable and max-stable random fields indexed by Lie groups. Using the structure theorem of abelian groups, \cite{roy:samorodnitsky:2008} gave finer asymptotics for the partial maxima of stable random fields indexed by $\mathbb{Z}^d$ (see also \cite{chakrabarty:roy:2013} for the continuous parameter case). However such finer results are still missing in our setup mainly due to unavailability of a general structure theorem for finitely generated noncommutative groups. It is perhaps possible to resolve this issue in special classes of groups.

\section{Proofs of the results stated in Section~\ref{PM}} \label{CP}

\begin{proof}[Proof of Theorem \ref{decomp}] Let us define for any $f \in \mathcal{L}^\alpha(S,m)$,
\begin{equation*}\label{psi}
\psi(f):=\limsup_{n \rightarrow \infty} \frac{(b_n(f))^\alpha}{|E_n|},
\end{equation*}
where $b_n(f)$ is as defined in \eqref{definitionofb_n}. For any measurable $B\subseteq S$ with $m(B)<\infty$, let $\psi(B):=\psi(\mathbf{1}_B)$. Note that for any $A, B \subseteq S$ with $m(A \cup B)<\infty$,
\begin{equation}\label{propertyforsum}
\psi(A \cup B)\leq \psi(A)+\psi(B),
 \end{equation}
  and if $0 \leq f \leq g \in \mathcal{L}^\alpha(S,m)$ then
 \begin{equation}\label{monotonicity}
\psi(f)\leq \psi(g).
 \end{equation}
We will need the following lemmas.
\begin{lemma}\label{approx} If $f,g\in \mathcal{L}^\alpha$ are such that
$$\int|f(x)-g(x)|^\alpha m(dx)<\epsilon,$$
then \begin{eqnarray*}
\psi(f)&\leq & \epsilon +\psi(g)   \mbox{\ \ \ \ \ \ \ \ \ \ \ \ \ \ \  for\ \ } \alpha \in (0,1)\\
\psi^{1/\alpha}(f)&\leq & \epsilon^{1/\alpha} +\psi^{1/\alpha}(g) \mbox{\ \ \ \ for\ \ } \alpha \in [1,2)
\end{eqnarray*}
\end{lemma}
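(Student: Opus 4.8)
The plan is to reduce the statement to a triangle–inequality estimate for the functionals $b_n$, treating $h:=f-g$ as a small $L^\alpha$–perturbation. Since the action $\{\phi_t\}$ and the cocycle $\{c_t\}$ are held fixed while only $f$ varies in \eqref{eq1}, the assignment $f\mapsto f_t$ is linear, so $(f-g)_t=f_t-g_t=h_t$ and pointwise $|f_t(x)|\le |g_t(x)|+|h_t(x)|$. Taking the maximum over $t\in E_n$ and using that the maximum of a sum is at most the sum of the maxima, I would obtain, for every $x$,
\[
\max_{t\in E_n}|f_t(x)|\;\le\;\max_{t\in E_n}|g_t(x)|+\max_{t\in E_n}|h_t(x)|.
\]

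Next I would split on the value of $\alpha$, exactly as the two displayed conclusions suggest. For $\alpha\in(0,1)$ I would raise the displayed inequality to the power $\alpha$, use subadditivity $(a+b)^\alpha\le a^\alpha+b^\alpha$, and integrate against $m$ to get $(b_n(f))^\alpha\le (b_n(g))^\alpha+(b_n(h))^\alpha$; dividing by $|E_n|$ and using $\limsup(u_n+v_n)\le\limsup u_n+\limsup v_n$ yields $\psi(f)\le\psi(g)+\psi(h)$. For $\alpha\in[1,2)$ I would instead apply Minkowski's inequality in $L^\alpha(m)$ directly to the displayed pointwise bound, which gives $b_n(f)\le b_n(g)+b_n(h)$; dividing by $|E_n|^{1/\alpha}$ and passing to the $\limsup$ (using that $t\mapsto t^{1/\alpha}$ is continuous and increasing, so the $\limsup$ of $b_n/|E_n|^{1/\alpha}$ equals $\psi^{1/\alpha}$) produces $\psi^{1/\alpha}(f)\le\psi^{1/\alpha}(g)+\psi^{1/\alpha}(h)$.

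The remaining, and only genuinely substantive, step is the bound $\psi(h)\le\epsilon$, which is where the structure of \eqref{eq1} enters. The key identity is the $L^\alpha$–isometry
\[
\int_S |h_t|^\alpha\,dm=\int_S \frac{dm\circ\phi_t}{dm}\,|h\circ\phi_t|^\alpha\,dm=\int_S|h|^\alpha\,dm,
\]
valid for every $t\in G$ by the change–of–variables formula for the nonsingular map $\phi_t$. Bounding the maximum by the sum, $\max_{t\in E_n}|h_t(x)|^\alpha\le\sum_{t\in E_n}|h_t(x)|^\alpha$, and integrating gives $(b_n(h))^\alpha\le\sum_{t\in E_n}\|h\|_\alpha^\alpha=|E_n|\,\|h\|_\alpha^\alpha<|E_n|\,\epsilon$. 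Hence $(b_n(h))^\alpha/|E_n|<\epsilon$ for all $n$, so $\psi(h)\le\epsilon$ and a fortiori $\psi^{1/\alpha}(h)\le\epsilon^{1/\alpha}$. Substituting into the two bounds of the previous paragraph finishes the proof.

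The step I expect to matter most is the crude estimate $(b_n(h))^\alpha\le|E_n|\,\|h\|_\alpha^\alpha$: replacing the maximum over $E_n$ by the full sum discards a great deal, but because $h$ is small in $L^\alpha$ and $|E_n|$ is precisely the normalizing factor in $\psi$, this is exactly enough to make the perturbation negligible after dividing by $|E_n|$. The case split on $\alpha$ (subadditivity of $t\mapsto t^\alpha$ for $\alpha\in(0,1)$ versus Minkowski for $\alpha\in[1,2)$) is the standard device forced by the failure of subadditivity past $\alpha=1$, and the fact that $h\mapsto h_t$ is an $L^\alpha$–isometry for each fixed $t$ is the property that makes $b_n$ stable under $L^\alpha$ perturbations in the first place.
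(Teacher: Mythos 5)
Your proposal is correct and follows essentially the same route as the paper: the central estimate $(b_n(f-g))^\alpha \le \sum_{t\in E_n}\int |f_t-g_t|^\alpha\,dm = |E_n|\,\|f-g\|_\alpha^\alpha < |E_n|\,\epsilon$, obtained by bounding the maximum by the sum and using that $h\mapsto h_t$ is an $L^\alpha(m)$-isometry for each fixed $t$ (via the Radon--Nikodym change of variables), together with the identical case split --- subadditivity of $x\mapsto x^\alpha$ for $\alpha\in(0,1)$ versus Minkowski's inequality for $\alpha\in[1,2)$. You simply make explicit the steps the paper compresses into its closing sentence (linearity of $f\mapsto f_t$, the pointwise bound $\max_{t\in E_n}|f_t|\le \max_{t\in E_n}|g_t|+\max_{t\in E_n}|h_t|$, and the commutation of $\limsup$ with the continuous increasing map $x\mapsto x^{1/\alpha}$), all of which are accurate.
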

\begin{proof}
Note that
\begin{eqnarray*}
(b_n(f-g))^\alpha \leq \sum_{t\in E_n} \int |(f_t-g_t)(x)|^\alpha m (dx)
=|E_n|\int |f(x)-g(x)|^\alpha m(dx) <|E_n|\epsilon,
\end{eqnarray*}
from which this result follows using the triangle inequality, and the facts that $(x+y)^\alpha \leq x^\alpha +y^\alpha$ for all $\alpha \in (0,1)$ and $x, y \geq 0$, and that for $\alpha \in [1,2)$, $\mathcal{L}^\alpha$ is a normed space.
\end{proof}

The above lemma has the following important consequences.

\begin{cor}\label{infinitesum}
If $B\subseteq S$ with $m(B)<\infty$ can be decomposed as $B=\cup_{n=1}^\infty B_n$, where $B_i$'s are pairwise disjoint satisfying $\psi(B_n)=0$ for all $n=1,2,\ldots$, then $\psi(B)=0$.
\end{cor}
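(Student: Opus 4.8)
The plan is to exploit the finiteness of $m(B)$ to reduce the countable decomposition to a finite one up to an arbitrarily small error, and then invoke the finite subadditivity \eqref{propertyforsum} together with the approximation Lemma~\ref{approx}. Since the $B_n$ are pairwise disjoint and $m(B)<\infty$, we have $\sum_{n=1}^\infty m(B_n)=m(B)<\infty$, so the tail sums $\sum_{n>N} m(B_n)$ tend to $0$ as $N\to\infty$. This is the only place where the hypothesis $m(B)<\infty$ is used, and it is what lets us truncate.

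Concretely, I would fix $\epsilon>0$ and choose $N$ so large that $m(R)<\epsilon$, where $R:=\bigcup_{n>N}B_n$; write $C:=\bigcup_{n=1}^N B_n$, so that $B=C\cup R$ is a disjoint union and $\mathbf{1}_B=\mathbf{1}_C+\mathbf{1}_R$. First I would apply finite subadditivity \eqref{propertyforsum} inductively to the finite union to obtain
\[
\psi(C)=\psi\Big(\textstyle\bigcup_{n=1}^N B_n\Big)\le \sum_{n=1}^N \psi(B_n)=0,
\]
using the assumption $\psi(B_n)=0$ for every $n$; since $\psi$ is manifestly nonnegative (being a $\limsup$ of nonnegative ratios), this forces $\psi(C)=0$. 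Next, because $\int |\mathbf{1}_B-\mathbf{1}_C|^\alpha\,m(dx)=m(R)<\epsilon$, I would apply Lemma~\ref{approx} with $f=\mathbf{1}_B$ and $g=\mathbf{1}_C$. In the case $\alpha\in(0,1)$ this gives $\psi(B)\le \epsilon+\psi(C)=\epsilon$, while in the case $\alpha\in[1,2)$ it gives $\psi(B)^{1/\alpha}\le \epsilon^{1/\alpha}+\psi(C)^{1/\alpha}=\epsilon^{1/\alpha}$, i.e.\ again $\psi(B)\le \epsilon$.

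In either regime we conclude $0\le \psi(B)\le \epsilon$, and since $\epsilon>0$ was arbitrary, $\psi(B)=0$, as claimed. I do not anticipate a genuine obstacle here: the argument is a routine truncation, and the two displayed inequalities from Lemma~\ref{approx} are tailored precisely to absorb the small $L^\alpha$-error coming from the tail. The only points that require a moment of care are the bookkeeping of the two cases $\alpha\in(0,1)$ and $\alpha\in[1,2)$ inherited from the lemma, and the observation that $\psi\ge 0$ so that the bound $\psi(B)\le\epsilon$ genuinely pins the value down to $0$ in the limit.
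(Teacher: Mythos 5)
Your proof is correct and follows essentially the same route as the paper's: truncate the union at a finite stage $N$ using $m(B)<\infty$, control the finite union via the subadditivity \eqref{propertyforsum}, absorb the small tail $m(R)<\epsilon$ through Lemma~\ref{approx}, and let $\epsilon\to 0$. If anything, you are slightly more explicit than the paper in handling the $\alpha\in[1,2)$ case, which the paper's proof leaves implicit.
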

\begin{proof}
As $m(B)=\sum_{n=1}^\infty m(B_n)<\infty$, hence, given any $\epsilon>0$, we can get a sufficiently large $N \in \N$ such that $m(B\setminus \cup_{n=1}^N B_n)<\epsilon$. Applying Lemma~\ref{approx}, for $\alpha \in (0,1]$, we get
$$\psi(B) \leq \epsilon +\psi(\cup_{n=1}^N B_n) \leq \epsilon +\sum_{n=1}^N \psi(B_n)$$ by \eqref{propertyforsum}. As $\psi(B_n)=0$ for all $n$, $\psi(B)\leq \epsilon$. Since this holds for all $\epsilon >0$, we are done.
\end{proof}

\begin{cor}\label{indicatormatters} If $\psi(B)>0$ for all subsets $B$ with $0<m(B)<\infty$, then $\psi(f)>0$ for all nonzero $f\in \LLL^\alpha $.
Also if $\psi(B)=0$ for all subsets $B$ with $m(B)<\infty$, then $\psi(f)=0$ for all $f\in \LLL^\alpha$.
\end{cor}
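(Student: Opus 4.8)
The plan is to reduce both assertions to three facts that are already available: the monotonicity property \eqref{monotonicity}, the scaling identity $\psi(cf)=|c|^\alpha\psi(f)$ (immediate from \eqref{definitionofb_n}, since $b_n(cf)=|c|\,b_n(f)$), and the approximation Lemma~\ref{approx}. First I would record the elementary observation that $\psi$ depends on $f$ only through $|f|$: because $|c_t|\equiv 1$, the integrand $\max_{t\in E_n}|f_t(x)|^\alpha$ in \eqref{definitionofb_n} is unchanged when $f$ is replaced by $|f|$, so $b_n(f)=b_n(|f|)$ and hence $\psi(f)=\psi(|f|)$.

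For the first statement I would fix a nonzero $f\in\LLL^\alpha$ and extract from it a set of finite positive measure that is dominated by $|f|$. Since $\int_S|f|^\alpha\,dm<\infty$, Chebyshev's inequality gives $m(\{|f|>\delta\})\le\delta^{-\alpha}\|f\|_\alpha^\alpha<\infty$ for every $\delta>0$, while $f\neq 0$ forces $m(\{|f|>\delta\})>0$ for some $\delta>0$. Setting $B=\{|f|>\delta\}$, we then have $0<m(B)<\infty$ and $\delta\mathbf{1}_B\le|f|$ pointwise. Monotonicity \eqref{monotonicity} together with the scaling identity yields
\[
\psi(f)=\psi(|f|)\ge\psi(\delta\mathbf{1}_B)=\delta^\alpha\psi(B)>0,
\]
the final inequality being exactly the hypothesis $\psi(B)>0$ for sets of finite positive measure.

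For the second statement I would first show that every simple function has vanishing $\psi$, and then pass to a limit. Given a nonnegative simple function $s=\sum_{k=1}^N a_k\mathbf{1}_{B_k}$ with the $B_k$ disjoint of finite measure, the bound $s\le(\max_k a_k)\mathbf{1}_{\cup_k B_k}$ combined with monotonicity, scaling, and $\psi(\cup_k B_k)=0$ (the union still has finite measure) gives $\psi(s)=0$. Now for an arbitrary $f\in\LLL^\alpha$, nonnegative simple functions with finite-measure support are dense in $\LLL^\alpha$ for the metric $\int|\cdot|^\alpha\,dm$ on the $\sigma$-finite space $(S,\mathcal S,m)$, so for each $\epsilon>0$ I may choose such an $s$ with $\int_S\bigl||f|-s\bigr|^\alpha\,dm<\epsilon$. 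Applying Lemma~\ref{approx} to the pair $|f|,s$ gives $\psi(|f|)\le\epsilon+\psi(s)=\epsilon$ when $\alpha\in(0,1)$, and $\psi(|f|)^{1/\alpha}\le\epsilon^{1/\alpha}+\psi(s)^{1/\alpha}=\epsilon^{1/\alpha}$ when $\alpha\in[1,2)$; letting $\epsilon\downarrow 0$ forces $\psi(f)=\psi(|f|)=0$.

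The only delicate points here are bookkeeping rather than conceptual. I must ensure the approximating simple functions can be taken with finite-measure support, which is guaranteed by $\sigma$-finiteness of $m$ together with integrability of $|f|^\alpha$, and I must keep the ranges $\alpha\in(0,1)$ and $\alpha\in[1,2)$ separate throughout, exactly as Lemma~\ref{approx} is phrased, so that the triangle-type inequality is invoked in its correct (non-normed versus normed) form. Beyond this, everything follows from monotonicity, homogeneity of $b_n$, and the stated hypotheses, so I do not anticipate any substantial obstacle.
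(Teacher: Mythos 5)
Your proposal is correct and takes essentially the same approach as the paper: for the first claim, both extract via Chebyshev a set of finite positive measure dominated (up to a constant) by $|f|$ and conclude by monotonicity and homogeneity; for the second, both approximate $f$ in $\mathcal{L}^\alpha$ by a function bounded by a constant times the indicator of a finite-measure set and invoke Lemma~\ref{approx}, keeping the cases $\alpha\in(0,1)$ and $\alpha\in[1,2)$ separate. The only cosmetic difference is that the paper truncates directly to $f\mathbf{1}_{\{c\leq|f|\leq K\}}$ rather than passing through density of simple functions, which is the same argument in slightly leaner form.
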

\begin{proof}
For any nonzero $f\in \LLL^\alpha$, there exists $c>0$ and some set $C$ with $0<m(C)<\infty$ such that $|f| \geq c\ind_C$. Hence if $\psi(B)>0$ for all $B$ with $0 < m(B) <\infty$, then $\psi(C)>0$. Thus, using \eqref{monotonicity},
$$\psi(f) \geq c^\alpha\psi(C)>0.$$

Next assume $\psi(B)=0$ for all subsets $B$ with $m(B) <\infty$ and $f\in \LLL^\alpha$. Then given any $\epsilon >0$, we get $K$ large enough, and $c$ small enough, such that $\int|f(x)-f(x)\ind_{\{c\leq |f|\leq K\}}(x)|^\alpha m(dx)<\epsilon$. Also observe that $\psi(f\ind_{\{c\leq|f|\leq K\}}) \leq K^\alpha \psi (\{c\leq|f|\leq K\})=0$ as $m(c\leq|f|\leq K)\leq m(|f|\geq c)<\infty$. The result now follows by applying Lemma~\ref{approx}. \end{proof}

This lemma tells us that it is enough to compute $\psi(B)$ for all sets $B$ with $m(B)<\infty$ instead of all functions in $\LLL^\alpha$. The next lemma relates $\psi(A)$ with $\psi(\phi_t(A))$.

\begin{lemma}\label{psiinvariant} If $\psi(A)=0$ for some subset $A$ with $0<m(A)<\infty$ and $g\in G$ is such that $m(\phi_g(A))<\infty$, then $\psi(\phi_g(A))=0$.
\end{lemma}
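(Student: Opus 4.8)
The plan is to reduce the statement for the translated set $\phi_g(A)$ to a statement about a single $\LLL^\alpha$--function supported on $A$, for which $\psi$ must vanish by monotonicity. Throughout I write $\rho_t:=\frac{dm\circ\phi_t}{dm}$, and I will use two standard facts about a nonsingular action, both consequences of $\phi_{ab}=\phi_b\circ\phi_a$: the change-of-variables identity $\int_S G(\phi_t(s))\rho_t(s)\,m(ds)=\int_S G\,dm$ for every nonnegative measurable $G$, and the cocycle relation $\rho_{ab}(s)=\rho_a(s)\rho_b(\phi_a(s))$. Using the representation \eqref{eq1} with $f=\1_A$ gives $|(\1_A)_t(s)|^\alpha=\rho_t(s)\1_A(\phi_t(s))$, and since $\phi_t(s)\in\phi_g(A)\iff\phi_{tg^{-1}}(s)\in A$, I start from
\[ (b_n(\1_{\phi_g(A)}))^\alpha=\int_S\max_{t\in E_n}\rho_t(s)\,\1_A(\phi_{tg^{-1}}(s))\,m(ds). \]

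The computational heart is to transport this integral back onto $A$. First I substitute $s=\phi_{g^{-1}}(u)$ via the change-of-variables identity with $t=g^{-1}$. Because $\phi_{tg^{-1}}\circ\phi_{g^{-1}}=\phi_{g^{-1}tg^{-1}}$ and, by the cocycle relation, $\rho_{g^{-1}}(u)\rho_t(\phi_{g^{-1}}(u))=\rho_{g^{-1}t}(u)$, the integral becomes $\int_S\max_{t\in E_n}\rho_{g^{-1}t}(u)\,\1_A(\phi_{g^{-1}tg^{-1}}(u))\,m(du)$. Setting $\sigma:=g^{-1}tg^{-1}$ (so that $g^{-1}t=\sigma g$) and applying the cocycle relation once more, $\rho_{\sigma g}(u)=\rho_\sigma(u)\rho_g(\phi_\sigma(u))$, the integrand equals $\rho_\sigma(u)\,[\rho_g\1_A](\phi_\sigma(u))=|f_\sigma(u)|^\alpha$ for the function $f:=\rho_g^{1/\alpha}\1_A$. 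Hence
\[ (b_n(\1_{\phi_g(A)}))^\alpha=\int_S\max_{\sigma\in g^{-1}E_n g^{-1}}|f_\sigma(u)|^\alpha\,m(du)\le (b_{n+2|g|}(f))^\alpha, \]
where the inequality uses $g^{-1}E_n g^{-1}\subseteq E_{n+2|g|}$, which holds since $|g^{-1}tg^{-1}|\le|t|+2|g|$.

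It then remains to show $\psi(f)=0$. Note first that $f=\rho_g^{1/\alpha}\1_A\in\LLL^\alpha$, because $\|f\|_\alpha^\alpha=\int\rho_g\1_A\,dm=m(\phi_g(A))<\infty$ by hypothesis (again via the change-of-variables identity); this is exactly where the assumption $m(\phi_g(A))<\infty$ is used. Since $f$ is supported on $A$ and $\psi(A)=0$, monotonicity \eqref{monotonicity} gives $\psi(B)=0$ for every $B\subseteq A$. Truncating $f$ to $\{c\le|f|\le K\}$, bounding $f\1_{\{c\le|f|\le K\}}\le K\1_A$, and letting $c\downarrow0$, $K\uparrow\infty$ while invoking Lemma~\ref{approx} yields $\psi(f)=0$, exactly as in the second half of Corollary~\ref{indicatormatters}. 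Finally, for a finitely generated group one has $|C_{n+1}|\le|D|\,|C_n|$, hence $|E_{n+k}|\le(1+|D|)^{k}|E_n|$, so combining this with $\psi(f)=0$,
\[ \psi(\phi_g(A))=\limsup_{n\to\infty}\frac{(b_n(\1_{\phi_g(A)}))^\alpha}{|E_n|}\le\limsup_{n\to\infty}\frac{(b_{n+2|g|}(f))^\alpha}{|E_{n+2|g|}|}\cdot\frac{|E_{n+2|g|}|}{|E_n|}=0. \]

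The main obstacle, and the reason this is not a one-line translation invariance, is the non-commutativity of $G$: after moving $\phi_g(A)$ back to $A$, the ball $E_n$ is conjugated to $g^{-1}E_n g^{-1}$ and — more seriously — an extra Radon–Nikodym factor $\rho_g$ is deposited on $A$, forcing one to work with $f=\rho_g^{1/\alpha}\1_A$ rather than $\1_A$ itself. (In the abelian $\Z^d$ setting both effects disappear and the statement is immediate.) Controlling the spurious weight is precisely where the finiteness hypothesis $m(\phi_g(A))<\infty$ enters, and the genuine content of the lemma is the reduction ``$\psi$ of any $\LLL^\alpha$ function supported on a $\psi$-null set is zero,'' which makes the extra factor harmless.
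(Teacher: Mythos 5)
Your proof is correct, and it takes a genuinely different route from the paper's. The paper proceeds in two stages: it first settles the measure-preserving case, where $(b_n(\ind_{\phi_g(A)}))^\alpha=m\big(\bigcup_{t\in E_n}\phi_{g\cdot t}(A)\big)\le m\big(\bigcup_{t\in E_{n+|g|}}\phi_t(A)\big)$ and the submultiplicativity $|E_{m+n}|\le|E_m|\,|E_n|$ finishes the job; it then reduces the general nonsingular case to this one via the Maharam extension $\phi^*_t(s,y)=\big(\phi_t(s),y/w_t(s)\big)$ on $S\times(0,\infty)$, using the identity $\psi(B)=\psi^*(B\times(n,n+1])$, the decomposition $\phi_g(A)\times(0,\infty)=\bigcup_{n\ge0}\phi_g^*(A\times(n,n+1])$, and Corollary~\ref{infinitesum}. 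You instead make a single direct computation: the change of variables $s=\phi_{g^{-1}}(u)$ plus two applications of the cocycle identity for the weights $w_t$ of \eqref{wts} transport $(b_n(\ind_{\phi_g(A)}))^\alpha$ onto $A$, at the price of the deposited weight $f=w_g^{1/\alpha}\ind_A$ and the conjugated index set $g^{-1}E_ng^{-1}\subseteq E_{n+2|g|}$ --- and I checked that all your identities ($\phi_g^{-1}=\phi_{g^{-1}}$, $\phi_{tg^{-1}}\circ\phi_{g^{-1}}=\phi_{g^{-1}tg^{-1}}$, $w_{\sigma g}=w_\sigma\cdot(w_g\circ\phi_\sigma)$) are consistent with the paper's anti-homomorphism convention $\phi_{u\cdot v}=\phi_v\circ\phi_u$, which is exactly where an order error would most easily creep in. The hypothesis $m(\phi_g(A))<\infty$ enters your argument precisely as $\|f\|_\alpha^\alpha=m(\phi_g(A))$, whereas in the paper it enters as the finiteness of $m\otimes\mbox{Leb}$ of the set being decomposed when invoking Corollary~\ref{infinitesum}; and your step ``$\psi$ of an $\LLL^\alpha$ function supported on a $\psi$-null set vanishes'' is a correctly re-derived localized version of the second half of Corollary~\ref{indicatormatters} (which, as stated, assumes $\psi(B)=0$ for \emph{all} finite-measure $B\subseteq S$, so you were right to redo the truncation argument via Lemma~\ref{approx} rather than cite it as a black box). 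What your route buys is a shorter, more elementary and self-contained argument that avoids the Maharam extension and the exhaustion over slices, and that makes the two effects of noncommutativity --- the extra Radon--Nikodym weight and the conjugated ball --- fully explicit; what the paper's route buys is a conceptual reduction of nonsingular statements to measure-preserving ones, a device that is reusable elsewhere. Your growth bound $|E_{n+k}|\le(1+|D|)^k|E_n|$ is cruder than the paper's $|E_{m+n}|\le|E_m|\,|E_n|$, but entirely sufficient for the conclusion.
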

\begin{proof}
First assume $\{\phi_t\}$ is measure $m$ preserving. Then
\begin{equation*}
(b_n(\phi_g(\ind_A)))^\alpha = m\left(\bigcup_{t\in E_n}\phi_t(\phi_g(A))\right) =m\left(\bigcup_{t\in E_n}\phi_{g.t}(A)\right)
 \leq  m\left(\bigcup_{t\in E_{n+|g|}}\phi_t(A)\right).
\end{equation*}
The last inequality follows as $|g\cdot t|\leq |g|+|t|\leq n+|t|$ and hence $\{g\cdot t|t\in E_n\}\subseteq E_{n+|g|}$.
Thus
\begin{eqnarray*}
\psi(\phi_g(A)) = \limsup_{n\rightarrow \infty} \frac{(b_n(\phi_g(\ind_A)))^\alpha}{|E_n|} &\leq & \limsup_{n\rightarrow \infty} \frac{m\left(\bigcup_{t\in E_{n+|g|}}\phi_t(A)\right)}{|E_n|}\\
%&=& \limsup_{n\rightarrow \infty} \frac{m\left(\bigcup_{t\in %E_{n+|g|}}\phi_t(A))\right)}{|E_{n+|g|}|}\frac{|E_{n+|g|}|}{|E_n|}\\
&\leq & \limsup_{n\rightarrow \infty} \frac{m\left(\bigcup_{t\in E_{n+|g|}}\phi_t(A)\right)}{|E_{n+|g|}|} |E_{|g|}|\\
&=& |E_{|g|}|\psi(A)=0.
\end{eqnarray*}
Here we have used the following combinatorial fact from geometric group theory: for any finitely generated group $G$, $|E_{m+n}|\leq |E_m||E_n|$ for all $m, n \in \mathbb{N}$; see Chapter~6 of \cite{Harpe:2000}.

Now we assume $\phi_t$ is any nonsingular map (not necessarily measure preserving). We have $0<m(A)<\infty$, $m(\phi_g(A))<\infty$ and $\psi(A)=0$. Define
\begin{equation}\label{wts}
w_t(s):=\frac{d\  m\circ\phi_t}{d\ m}(s), \quad t \in G, s \in S,
\end{equation}
and the group action $\phi_t^*$ of $G$ on $(S\times (0,\infty),  m\otimes \mbox{Leb})$ as
$$\phi_t^*(s,y):=\left(\phi_t(s),\frac{y}{w_t(s)}\right), \quad s \in S, \, y>0, \, t\in G.$$

It is easy to see that $\phi_t^*$ preserves the measure $m\otimes \mbox{Leb}$ (this action is called Maharram extension; see \cite{maharam:1964} and Chapter 3.4 of \cite{aaronson:1997}). Denote for any set $B\subseteq S\times (0,\infty)$, $\psi^*(B)$ as before but using the group action $\phi_t^*$. Also note that, for any $n\in \{0,1,2,\ldots\}$, and any subset  $B\subseteq S $, $\psi(B)=\psi^*(B\times (n,n+1])$. This is because
\begin{eqnarray*}
&&m\otimes \mbox{Leb}\left(\bigcup_{t\in E_n}\phi_t^*(B\times(n,n+1])\right)\\
&=& \int_S \int_0^\infty \max_{t\in E_n} \ind(\phi_t(s)\in B,nw_t(s)< y\leq(n+1)w_t(s))dy\,m(ds)\\
&=& \int_S \max_{t\in E_n}w_t(s)\ind _B(\phi_t(s))m(ds)=(b_n(\ind_B))^\alpha.
\end{eqnarray*}
Hence $\psi^*(A\times (n,n+1])=\psi(A)=0$ for all $n=0,1,2,\ldots$. Also as $\psi^*(\phi_g(A)\times (0,1])=\psi(\phi_g(A))$, so we need to prove $\psi^*(\phi_g(A)\times (0,1])=0$. To this end, let us define $\Omega_n=\phi_g^*(A\times (n,n+1])$ and $B=\phi_g(A)\times(0,1]$. Then $\cup_{n=0}^\infty\Omega_n$ is equal to
$$\bigcup_{n=0}^\infty \phi_g^*(A\times (n,n+1])=\phi_g^*\left(\bigcup_{n=0}^\infty(A\times (n,n+1])\right)=\phi_g^*(A\times (0,\infty))=\phi_g(A)\times(0,\infty).$$

As $B\subseteq \cup_{n=0}^\infty \Omega_n$, $\Omega_n$'s are disjoint, so $B$ can be decomposed as $B=\cup_{n=0}^\infty (B\cap \Omega_n)$. Also $m\otimes \mbox{Leb}(B)=m(\phi_g(A))<\infty$, hence by Corollary~\ref{infinitesum} and \eqref{monotonicity}, it is enough to show $\psi^*(\Omega_n)=0$ for all $n$. Now
$$\psi^*(\Omega_n)=\psi^*(\phi_g^*(A\times (n,n+1]))=0$$
using the already considered case of measure preserving actions and $\psi^*(A\times(n,n+1])=\psi(A)=0$.
\end{proof}

We are now in a position to present the proof of Theorem~\ref{decomp}.

\noindent (i) For simplicity, assume without loss of generality, that the control measure $m$ is a probability measure. This can always be done because if $\nu$ is a probability measure equivalent to $m$, define $h=f(\frac{dm}{d\nu})^{1/\alpha}\in \LLL^\alpha(S,\nu)$, and write $X_t$ as an integral representation in \eqref{integrep} and \eqref{eq1} replacing $f$ by $h$ and the \sas random measure $M$ by an \sas random measure with control measure $\nu$. Note that the supports of $f$ and $h$ are equal. Also $b_n(f)$ calculated with respect to the measure $m$ is same as $b_n(h)$ corresponding to the measure $\nu$. Henceforth we assume $m$ is a probability measure.

Consider all subsets $B\subseteq S$ such that $\psi(B)=0$. As these sets form a hereditary collection (i.e., $C\subseteq B$ and $\psi(B)=0$ implies $\psi(C)=0$), we can take the measurable union of all such sets, and call it $\BB$. Define $\AA:=S\setminus \BB$. Note that $\psi(\BB)=0$ by Corollary~\ref{infinitesum} and  exhaustion lemma (see page~7 of Aaronson \cite{aaronson:1997}). Hence, for any $C\subseteq S$, $\psi(C)=0$ if and only if $C\subseteq \BB$. Consequently, $C\subseteq \AA$ if and only if for all subsets $B\subseteq C$ with $m(B)>0$, one has $\psi(B)>0$.

In order to see that $\BB$ is $\{\phi_t\}$-invariant, take any set $C\subseteq \BB$ and any $g\in G$. As $C\subseteq \BB$, $\psi(C)=0$ and then by Lemma~\ref{psiinvariant}, we have $\psi(\phi_g(C))=0$, which implies $\phi_g(C)\subseteq \BB$. This shows $\phi_g(\BB)\subseteq \BB$. As $\phi_g$ is invertible, we have $\phi_g(\BB)=\BB$. Hence $\BB$ is $\{\phi_t\}$-invariant. Hence also $\AA$ is $\{\phi_t\}$-invariant. Applying Corollary~\ref{indicatormatters}, we have, for all $f \in\LLL^\alpha$ supported on $\BB$,  $\psi(f)=0$. Similarly, for any nonempty set $B\subseteq \AA$, we have $\psi(B)>0$, and hence for any nonzero $f$ supported on $\AA$, it follows that $\psi(f)>0$. Hence for any $f$ whose support has nontrivial intersection with $\AA$, $\psi(f)\geq \psi(f\ind_\AA)>0$.

\noindent (ii) If $W^*$ is a wandering set with $m(W^*)>0$, then
\begin{eqnarray*}
(b_n(\ind_{W^*}))^\alpha=\int_S \max_{t\in E_n}w_t(s)\ind _{W^*}(\phi_t(s))m(ds)=\int_S \sum_{t\in E_n}w_t(s)\ind _{W^*}(\phi_t(s))m(ds),
\end{eqnarray*}
where $w_t(s)$ is as in \eqref{wts}. To see the last equality, note that the functions\\ $w_t(s)\ind _{W^*}(\phi_t(s))$ are supported on $\phi_{t^{-1}}(W^*)$, which are all pairwise disjoint as $t$ runs over $G$. Therefore, the maximum can be replaced by the sum. It is easy to observe that $\int_S w_t(s)\ind _{W^*}(\phi_t(s))m(ds)=m(W^*)$ and hence
\begin{equation}\label{bndiss}
(b_n(\ind_{W^*}))^\alpha=|E_n|m(W^*),
\end{equation}
which yields $\psi(W^*) >0$. In fact, for any $B\subseteq W^*$ with $m(B)>0$, $B$ is also a wandering set of positive measure and so $\psi(B)>0$, which means $W^*\subseteq \AA$. As the dissipative part $\DD$ is the union of all wandering sets, it follows that $\DD\subseteq \AA$. In fact, note that \eqref{bndiss} implies $\lim_{n \rightarrow \infty} \frac{b_n(\ind_{B})}{|E_n|^{1/\alpha}}>0$ for all $B\subseteq \DD$.

Next we want to show that the positive part $\PP\subseteq \BB$, i.e., for all $B\subseteq \PP$, $\psi(B)=0$. Restrict the group action to the set $\PP$, and assume that the measure $m$ is a probability measure which is preserved by the group action (go to the equivalent probability measure of $m$ that is preserved by the group action, which exists as $\PP$ is the positive part, and note that the decomposition into sets $\AA$ and $\BB$ remains unchanged for any equivalent measure). Then clearly $\psi(B)=0$ as $(b_n(\ind_B))^\alpha =m(\cup_{t\in E_n}\phi_t(B))\leq 1$ and $|E_n|\rightarrow \infty$.

\noindent (iii) Recall that $\{X_t\}_{t \in G}$ has an integral representation \eqref{integrep} of the form \eqref{eq1}. Let $\{Y_t\}_{t \in G}$ be another stationary \sas random field independent of $\{X_t\}$ such that its integral representation has a kernel $g\in \mathcal{L}^\alpha(S',m')$ and a $G$-action $\{\phi_t'\}$ on $S'$ satisfying $\psi(g)=0$ and $b_n(g)\geq c|E_n|^\theta$ for all $n \geq 1$. Here $c$ and $\theta$ are positive constants. Marginally, such a $\{Y_t\}_{t \in G}$ exists; see Example~\ref{eg1}). However, in order to construct such a random field independent of $\{X_t\}_{t\in G}$, an enlargement of the underlying probability space may be necessary.

Now assume that $f$ is supported on $\BB$, i.e., the component $X_t^\AA=0$. By Part~(i) of this Theorem, we have $\psi(f)=0$. Let $Z_t:=X_t + Y_t$, $t \in G$ defined in parallel to the process $\mathbf{Z}$ defined in page 1452 of \cite{samorodnitsky:2004a}. That is, $\{Z_t\}_{t\in G}$ has an integral representation on the (possibly artificially disjointified) union $S\cup S'$ with kernel $f\ind_S+g\ind_{S'}$, and the nonsingular action defined by
$$
\eta_t(s)=\left \{\begin{array}{ll}
                              \phi_t(s)  & \mbox{if $s \in S$},\\
                              \phi_t'(s)                                                              & \mbox{if $s\in S'$}
                         \end{array}
                    \right.
$$
for all $t\in G$. Then clearly, $\psi(f+g) = \psi(f)+\psi(g)=0$. Following the arguments used in the proof of (4.3) in \cite{samorodnitsky:2004a}, the rest follows.

If the component $X_t^\AA$ is nonzero, then the support of $f$ has nonzero intersection with $\AA$, and hence by Part~(i) of this theorem, $\psi(f)=\limsup_{n\rightarrow \infty}\frac{b_n(f)}{|E_n|^{1/\alpha}}=b>0$ for some positive constant $b$. Get a subsequence $n_k$ such that $\lim_{k\rightarrow \infty} \frac{b_{n_k}(f)}{|E_{n_k}|^{1/\alpha}}=b$. Working along this subsequence, and following the proof of (4.9) in \cite{samorodnitsky:2004a}, we get $M_{n_k}/b_{n_k} \Rightarrow \mathfrak{C}_\alpha^{1/\alpha}Z_\alpha$. Hence
$$\frac{M_{n_k}}{|E_{n_k}|^{1/\alpha}}\Rightarrow b\mathfrak{C}_\alpha^{1/\alpha}Z_\alpha.$$

Now we show that $M_n/|E_n|^{1/\alpha}$ is always tight. Take $\{Z_t\}_{t \in G}$ as in the proof of Part~(iii)~(a). Following arguments in the proof of (4.3) in \cite{samorodnitsky:2004a}, it can be shown that $M_n^Z/b_n^Z$ is tight (here $M_n^Z$, $b_n^Z$ denote the quantities corresponding to $M_n$, $b_n$ defined for the stationary \sas random field $\{Z_t\}_{t \in G}$ ). Also for any nonzero $f\in \LLL^\alpha$,
 $$b_n(f)=\left(\int \max _{t\in E_n}|f_t(x)|^\alpha m(dx)\right)^{1/\alpha} \leq \left(\sum_{t\in E_n}\int|f_t(x)|^\alpha m(dx)\right)^{1/\alpha}=|E_n|^{1/\alpha} \|f\|_\alpha.$$
This calculation yields that $b_n^Z \leq |E_n|^{1/\alpha}\big(\|f\|_\alpha^\alpha + \|g\|_\alpha^\alpha \big)^{1/\alpha}$, where $g$ is the kernel function for $\{Y_t\}_{t \in G}$. Hence $M_n^Z/|E_n|^{1/\alpha}$ is tight. This implies, as in the proof of (4.3) in \cite{samorodnitsky:2004a}, that $M_n/|E_n|^{1/\alpha}$ is also tight.
\end{proof}

\begin{proof}[Proof of Theorem \ref{freegroupliminf}]
We show that when $G$ is a free group of rank $d$, then for any $f\in \LLL^\alpha$,
\begin{equation}\label{liminfsup}
\mbox{if} \quad \liminf_{n\rightarrow \infty}\frac{b_n(f)}{|E_n|^{1/\alpha}}=0, \quad \mbox{then} \quad \limsup_{n\rightarrow \infty}\frac{b_n(f)}{|E_n|^{1/\alpha}}=0.
\end{equation}
To see this, observe that, for any $m,n \in \N$, $E_{n+m}=E_n\cdot E_m:=\{g_1\cdot g_2: g_1\in E_n,g_2\in E_m\}$. Hence
\begin{eqnarray*}
(b_{n+m}(f))^\alpha &=&
\int \max_{t_1\in E_n}\max _{t_2\in E_m}|f_{t_1t_2}(x)|^\alpha m(dx)\\
&\leq&  \sum_{t_1\in E_n}\int \max _{t_2\in E_m}|f_{t_1t_2}(x)|^\alpha m(dx)
=|E_n|(b_m(f))^\alpha.
\end{eqnarray*}
Here the last equality follows because with $w_t$ as in \eqref{wts},
\begin{eqnarray*}
&&\int \max _{t_2\in E_m}|f_{t_1t_2}(x)|^\alpha m(dx)\\
&=&\int \max_{t_2\in E_m}w_{t_1}(x)w_{t_2}(\phi_{t_1}(x))|f\circ \phi_{t_2}(\phi_{t_1}(x))|^\alpha dm(x)\\
&=&\int\max_{t_2\in E_m}w_{t_2}(x)|f\circ\phi_{t_2}(x)|^\alpha dm(x) =\int\max_{t_2\in E_m}|f_{t_2}(x)|^\alpha dm(x).
\end{eqnarray*}

Hence for any $m,n \in \N$,
\begin{equation}\label{boundonbn}
\frac{(b_{n+m}(f))^\alpha}{|E_{n+m}|}\leq \frac{|E_n||E_m|}{|E_{n+m}|}\frac{(b_m(f))^\alpha}{|E_m|}\leq \left(\frac{d}{d-1}\right)^2 \frac{(b_m(f))^\alpha}{|E_m|}
\end{equation}
using the trivial bounds
\begin{equation}\label{trivial_bounds}
(2d-1)^n\leq |E_n|\leq \frac{d}{d-1}(2d-1)^n.
\end{equation}
So if $\liminf_{n\rightarrow \infty}\frac{b_n(f)}{|E_n|^{1/\alpha}}=0$, then for any $\epsilon>0$, we get an $m\in \N$, such that $\frac{b_m(f)}{|E_m|^{1/\alpha}}<\epsilon$. Hence using \eqref{boundonbn}, we have that for all $n\geq m$, $\frac{b_n(f)}{|E_n|^{1/\alpha}}<\left(\frac{d}{d-1}\right)^{2/\alpha}\epsilon$. This shows \eqref{liminfsup}.

Thus for any $f\in\LLL^\alpha$ whose support has nontrivial intersection with $\AA$, we have $\liminf_{n\rightarrow \infty}\frac{b_n(f)}{|E_n|^{1/\alpha}}>0$. Again, as $0\leq \frac{b_n(f)}{|E_n|^{1/\alpha}}\leq \|f\|_\alpha$, given any sequence $n_k$, there exists a further subsequence $n_{k_\l}$ such that $\frac{b_{n_{k_\l}}(f)}{|E_{n_{k_\l}}|^{1/\alpha}}$ converges to some $c>0$. Rest follows by applying the proof of (4.9) in \cite{samorodnitsky:2004a} along this subsequence.
\end{proof}

\begin{proof}[Proof of Theorem \ref{shiftofboundary}] This result trivially follows from Examples \ref{eg2} and \ref{eg3}.
\end{proof}

\begin{proof}[Proof of Theorem \ref{boundarymatters}]
One implication is obvious. For the other one, assume $\limsup_{n\rightarrow \infty} \frac{\int \max_{t\in C_n}|f_t(x)|^\alpha m(dx)}{(2d-1)^{n}}=0$.  Fix $\epsilon >0$, and choose $K\in \N$ large enough so that $\frac{|E_{n-K}|}{(2d-1)^n}\leq \epsilon$ for all $n \ge K$. This is possible by \eqref{trivial_bounds}. Hence
\begin{eqnarray*}
&&\limsup_{n\rightarrow \infty} \frac{\int \max_{t\in (\cup_{i=n-K+1}^n C_n)}|f_t(x)|^\alpha m(dx)}{(2d-1)^{n}}\\
&\leq & \limsup_{n\rightarrow \infty}\sum_{i=0}^{K-1}\frac{\int \max_{t\in C_{n-i}}|f_t(x)|^\alpha m(dx)}{(2d-1)^{n}}\\
&\leq &\sum_{i=0}^{K-1}\frac{1}{(2d-1)^i}\limsup_{n\rightarrow \infty}\frac{\int \max_{t\in C_{n-i}}|f_t(x)|^\alpha m(dx)}{(2d-1)^{n-i}}=0.
\end{eqnarray*}
Also for all $n \ge K$,
\begin{equation*}
\frac{\int \max_{t\in E_{n-K}}|f_t(x)|^\alpha m(dx)}{(2d-1)^{n}}
\leq \|f\|_\alpha^\alpha \frac{|E_{n-K}|}{(2d-1)^n}\leq \epsilon \|f\|_\alpha^\alpha.
\end{equation*}
Putting the two inequalities together, we get $\limsup_{n\rightarrow \infty} \frac{\int \max_{t\in E_n}|f_t(x)|^\alpha m(dx)}{(2d-1)^{n}}\leq \epsilon \|f\|_\alpha^\alpha$. Since $\epsilon > 0$ is arbitrary, this result follows.
\end{proof}

\begin{proof}[Proof of Theorem \ref{setsinA}]
Take a set $B\subseteq S$ such that $\limsup_{n\rightarrow \infty} \frac{a_n(B)}{|E_n|}>0$. Then for any $C\subseteq B$ also, we have $\limsup_{n\rightarrow \infty} \frac{a_n(C)}{|E_n|}>0$. For any such set $C$, let $S_n$ denote a subset of $E_n$ with cardinality $a_n(C)$ such that $\phi_{t^{-1}}(C)$, $t\in S_n$ are pairwise disjoint. Hence with $w_t$ as in \eqref{wts}, we get
\begin{equation*}
(b_n(\ind_{C}))^\alpha
\geq  \int\max_{t\in S_n}w_t(s)\ind _{C}(\phi_t(s))m(ds)
= \sum_{t\in S_n} \int w_t(s)\ind _{C}(\phi_t(s))m(ds)=a_n(C)m(C).
\end{equation*}
The above calculation yields that $\psi(C)>0$ for all $C\subseteq B$ satisfying $m(C) > 0$. Therefore $B\subseteq \AA$.
\end{proof}

\section{Proofs of the results stated in Section~\ref{Disscase}}\label{DP}

\begin{proof}[Proof of Theorem ~\ref{maintheorem}]
 We first show that the Laplace functional of $N_*$ defined in \eqref{Nstar2} is indeed of the form \eqref{lap2}. By Proposition~5.4 of \cite{resnick:2007}, we can assume, without loss of generality, that the function $g$ is also continuous with compact support. For such a function $g$, we have $\E \big(e^{-N_*(g)}\big)=\E \big(e^{-\sum_i \psi (j_i,v_i,u_i,s_i,\mathbf{r_i})}\big)$, where $\psi(x,v,u,s, \mathbold{\rho})$ is given by
$$\sum_{k\in G} g\left(x\tilde{f'}^{(|u|,\rho_{|u|})}(v,k)\right)\mathbf{1}_{(|u|>0)}
+\sum_{k\in G} g\left(\left(\frac{d}{d-1}\right)^{1/\alpha} x\tilde{f'}^{(s,\rho_{s})}(v,k)\right)\mathbf{1}_{(|u|=0)}$$
for all $x \in [-\infty, \infty]\setminus \{0\}$, $v \in W$, $u \in G$, $s \in \mathbb{Z}$ and $\mathbold{\rho}=(\rho_\l: \l \in \mathbb{Z}) \in \Gamma$. Because of ~\eqref{M*}, the Laplace functional of $N_*$ is equal to $\E(e^{-M(\psi)})$ which is
%\begin{eqnarray*}
$$\exp \Bigg[-\int_{\Gamma} \int_W \int_{|x|>0}\Bigg\{ \sum_{|u|>0}\sum_{s=-\infty}^0 2d(2d-1)^{s-1}\left(\frac{d-1}{d}\right)\left(1-e^{-\sum_{k\in G} g\left(x\tilde{f'}^{(|u|,\rho_{|u|})}(v,k)\right)}\right)$$
 $$+\sum_{s=-\infty}^0 2d(2d-1)^{s-1}\left(\frac{d-1}{d}\right)\left(1-e^{-\sum_{k\in G} g\left(\left(\frac{d}{d-1}\right)^{1/\alpha} x\tilde{f'}^{(s,\rho_{s})}(v,k)\right)}\right)\Bigg\}d\nu_\alpha d\nu d\gamma(\mathbold{\rho})\Bigg].$$
%\end{eqnarray*}
The first integral inside the exponent equals
\begin{eqnarray*}
%& &\iiint\sum_{|u|>0}\sum_{s=-\infty}^0 2d(2d-1)^{s-1}\left(\frac{d-1}{d}\right)\left(1-e^{-\sum_{k\in G} g\left(x\tilde{f'}^{(|u|,r_{|u|})}(v,k)\right)}\right)d\gamma d\nu_\alpha d\nu \\
&& \; \iiint\sum_{|u|>0}\left(1-e^{-\sum_{k\in G} g\left(x\tilde{f'}^{(|u|,\rho_{|u|})}(v,k)\right)}\right)d\gamma d\nu_\alpha d\nu \\
&=&\iint\sum_{\l=1}^{\infty}\int \sum_{u\in C_\l}\left(1-e^{-\sum_{k\in G} g\left(x\tilde{f'}^{(\l,\rho_{\l})}(v,k)\right)}\right)d\gamma d\nu_\alpha d\nu \\
&=&\iint\sum_{\l=1}^\infty \left[2d(2d-1)^{\l-1}\int\left(1-e^{-\sum_{k\in G} g\left(x\tilde{f'}^{(l,\xi)}(v,k)\right)}\right)d\gamma_\l(\xi)\right]d\nu_\alpha d\nu.
\end{eqnarray*}
Here the last equality follows because $|C_\l|=2d(2d-1)^{\l-1}$. Using the change of variable $y=\left(\frac{d}{d-1}\right)^{1/\alpha}x$ and the scaling property of $\nu_\alpha$ for the second integral, and combining the output with the first one, we get that the Laplace functional of $N_*$ is indeed the one given in  \eqref{lap2}.

Next we show that $N_*$ is Radon. Note that for all $\l \in \Z$ and for all $\xi \in \Gamma_\l$,
\begin{equation}\label{easybound}
|\tilde{f'}^{(\l,\xi)}|\leq |f'| \in \mathcal{L}^\alpha(W\times G,\nu\otimes \zeta).
\end{equation}
Now fix a $\delta>0$. Set $C=[-\infty,-\delta)\cup(\delta,\infty]$ and $h=\mathbf{1}_C$. To establish that $N_*$ is Radon, it is enough to show that $\E[N_*(h)]<\infty$. To this end, we write $N_*=N^{(1)}+N^{(2)}$, where
\begin{equation*}
  N^{(1)}  =
      \sum_{i=1}^\infty \sum_{k \in G}\delta_{j_i\tilde{f'}^{\left(|u_i|,r_{i,|u_i|}\right)}(v_i,k)}\mathbf{1}_{(u_i \neq e)}.
\end{equation*}
and
\begin{equation*}
  N^{(2)}  =
      \sum_{i=1}^\infty \sum_{k \in G}\delta_{\left(\frac{d}{d-1}\right)^{1/\alpha} j_i\tilde{f'}^{\left(s_i,r_{i,s_i}\right)}(v_i,k)}\mathbf{1}_{(u_i =e)}.
\end{equation*}
We first establish that $ \E[N^{(2)}(h)]$ is finite. To this end, note that another use of the scaling property of $\nu_\alpha$ yields
\begin{align*}
&\;\;\;\;\;\quad \E[N^{(2)}(h)] \\
&\;\;\;= \iint\sum_{\l=-\infty}^0\sum_{k\in G}2d(2d-1)^{\l-1}\left(\int \mathbf{1}_C\left(y\tilde{f'}^{(\l,\rho_\l)}(v,k)\right)\nu_\alpha(dy)\right)\nu(dv)\gamma(d\mathbold{\rho}),\\
\intertext{from which, using the definition of $\nu_\alpha$ and applying \eqref{easybound}, we get}
& \;\;\; = 2\delta^{-\alpha}\int\sum_{\l=-\infty}^0\sum_{k\in G}2d(2d-1)^{\l-1}\int|\tilde{f'}^{(\l,\xi)}(v,k)|^\alpha\gamma_\l(d\xi)\nu(dv) \\
& \;\;\; \leq  2\delta^{-\alpha}\left(\frac{d}{d-1}\right)\|f'(v,k)\|_\alpha^\alpha < \infty.
\end{align*}

The proof of finiteness of $\E[N^{(1)}(h)]$, however, is slightly more involved. Let us start by observing that a similar calculation as above yields
\begin{equation}\label{form_of_N^1_h}
\E[N^{(1)}(h)] =2\delta^{-\alpha}\int\sum_{\l=1}^\infty\sum_{k\in G}2d(2d-1)^{\l-1}\int|\tilde{f'}^{(\l,\xi)}(v,k)|^\alpha\gamma_\l(d\xi)\nu(dv),
\end{equation}
which needs to be tightly estimated by a quantity that can be shown to be finite. This would require the following combinatorial fact.

\begin{lemma} \label{lemma:vertex_counting}
Fix $\l\in \N$. Then for each $k=0,1,2,\ldots$\,, every $\l$-subgraph has exactly $(2d-1)^{\floor*{k/2}}$ many vertices from $C_{\l+k}$. Here $\floor*{\cdot}$ denotes the floor function.
\end{lemma}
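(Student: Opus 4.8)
The plan is to fix $\ell\ge 1$ and work with a single $\ell$-subgraph $\xi=\bigcup_{i\ge 0}V_i$, where $V_i=\{t\in G:d(t,v_i)\le i\}$ and $v_0,v_1,v_2,\dots$ is the chosen self-avoiding path with $v_0\in C_\ell$ and $|v_i|=\ell+i$ (the case $\ell=0$ of Case~1 is handled identically). Since $d(v_i,v_{i+1})=1$, the triangle inequality gives $V_i\subseteq V_{i+1}$, so the union is increasing and a vertex $t$ lies in $\xi$ if and only if $d(t,v_j)\le j$ for all sufficiently large $j$ (equivalently, for some $j$). My goal is then to count the vertices $t\in C_{\ell+k}$ satisfying this.

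The main step is a tree-geometric reduction. For any $t$ with $|t|=\ell+k$ I would invoke the standard tree identity $d(t,v_j)=|t|+|v_j|-2|t\wedge v_j|$, where $|t\wedge v_j|$ is the level of the confluent (deepest common ancestor) of $t$ and $v_j$ in the $2d$-regular Cayley tree. Substituting $|t|=\ell+k$ and $|v_j|=\ell+j$ turns the condition $d(t,v_j)\le j$ into $|t\wedge v_j|\ge\ell+k/2$, and hence, by integrality of confluent levels, into $|t\wedge v_j|\ge\ell+\lceil k/2\rceil$. Now $|t\wedge v_j|$ is nondecreasing in $j$ and stabilizes to $M(t):=\lim_{j\to\infty}|t\wedge v_j|$, the level at which the geodesic from $e$ to $t$ separates from the geodesic ray through $v_0,v_1,v_2,\dots$. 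Consequently $t\in\xi$ if and only if $M(t)\ge\ell+\lceil k/2\rceil$, i.e.\ if and only if $v_{\lceil k/2\rceil}$ lies on the geodesic from the root $e$ to $t$.

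It then remains to count the $t\in C_{\ell+k}$ admitting $v_{\lceil k/2\rceil}$ as an ancestor. These are exactly the vertices at distance $(\ell+k)-(\ell+\lceil k/2\rceil)=\lfloor k/2\rfloor$ from $v_{\lceil k/2\rceil}$ lying in the subtree away from the root (note $\lceil k/2\rceil+\lfloor k/2\rfloor=k$). As every vertex of the $2d$-regular tree other than $e$ has precisely $2d-1$ neighbours pointing away from the root, the number of such descendants at distance $\lfloor k/2\rfloor$ is $(2d-1)^{\lfloor k/2\rfloor}$, which is the claimed count; since this value depends only on $d$ and not on the particular self-avoiding path, the same number is obtained for every $\ell$-subgraph simultaneously.

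The step demanding the most care is the bookkeeping in the reduction: one must follow $|t\wedge v_j|$ as $j$ varies, confirm that letting $j\to\infty$ reduces the membership condition to $t$ lying in the subtree below $v_{\lceil k/2\rceil}$, and correctly pair the ceiling governing the ancestor's depth, $\ell+\lceil k/2\rceil$, with the floor governing the residual distance, $\lfloor k/2\rfloor$, so that their sum recovers $k$ and the exponent emerges as $\lfloor k/2\rfloor$. Once this is set up, the vertex count is a routine consequence of $2d$-regularity.
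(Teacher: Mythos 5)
Your proof is correct, and it lands on the same geometric picture as the paper's own argument: the vertices of an $\ell$-subgraph in $C_{\ell+k}$ are exactly the descendants of $v_{\lceil k/2\rceil}$ at distance $\lfloor k/2\rfloor$ pointing away from the root, of which there are $(2d-1)^{\lfloor k/2\rfloor}$ by $2d$-regularity. But your route to that picture is different and somewhat tighter. The paper proceeds descriptively, with a parity split: for $k$ even it observes that $V_{(k-2)/2}$ misses $C_{\ell+k}$ while $V_{k/2}$ meets it, counts the level-$(\ell+k)$ vertices of $V_{k/2}$, and then asserts that any vertex of $C_{\ell+k}$ lying in some later $V_s$ is already in $V_{k/2}$ (similarly for $k$ odd with $V_{(k+1)/2}$), the last stabilization claim being stated without a detailed argument; the cases $k\le 2$ are checked separately. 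You instead derive an exact membership criterion: via the tree identity $d(t,v_j)=|t|+|v_j|-2|t\wedge v_j|$, the condition $t\in V_j$ for $t\in C_{\ell+k}$ becomes $|t\wedge v_j|\ge \ell+\lceil k/2\rceil$, and since the confluent level is nondecreasing in $j$ along the ray (the geodesics $[e,v_j]$ are nested because $|v_j|=\ell+j$ increases along the path), $t\in\xi$ if and only if $v_{\lceil k/2\rceil}$ lies on the geodesic from $e$ to $t$. This single computation absorbs both parities at once through $\lceil k/2\rceil+\lfloor k/2\rfloor=k$, needs no separate treatment of small $k$, and turns the paper's unproved ``no new vertices from later balls'' step into a consequence of the if-and-only-if characterization. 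The only point your count silently uses — that each step away from the root offers $2d-1$ choices, valid at any non-root vertex — is safe, since whenever $\lfloor k/2\rfloor\ge 1$ the ancestor $v_{\lceil k/2\rceil}$ sits at level $\ell+\lceil k/2\rceil\ge 1$. In short: same final count and same branch vertex, but your metric/confluent derivation is a cleaner and fully rigorous version of the paper's case-by-case bookkeeping.
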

\begin{proof} Fix any subgraph in $\Gamma_\l$. Consider a self avoiding path $\{v_0,v_1,v_2,\ldots
\}$ (diverging away from the root) corresponding to this subgraph. Note that since every vertex in $G$ has degree $2d$, the subgraph has exactly one vertex in $C_\l$, one vertex in $C_{\l+1}$, and $2d-1$ vertices in $C_{\l+2}$.

In general, if $k \ge 4$ is even, then $V_{\frac{k-2}{2}}$ does not contain any vertex in $C_{\l+k}$, but $V_{\frac{k}{2}}$ contains vertices that lie in $C_{\l+k}$ (recall $|v_s|=\l+s$, and $V_s$ consists of all vertices at distance not greater than $s$ from $v_s$). Since every vertex has degree $2d$ and the distance from the root increases if one moves from one vertex to any of its adjacent vertices away from the root, the number of vertices in $C_{\l+k}$ that are contained in $V_{\frac{k}{2}}$ (and hence in the $\l$-subgraph) is the number of vertices at a distance $\frac{k}{2}$ from $v_k$ and away from the root (i.e., along $2d-1$ directions), which is $(2d-1)^{{k}/{2}}$. Note that any vertex in $C_{\l+k}$ that is contained in $V_s$ for some $s$ greater than ${k}/{2}$, is already contained in $V_{\frac{k}{2}}$. Thus the subgraph has $(2d-1)^{{k}/{2}}$ many vertices from $C_{\l+k}$.

Similarly, if $k \ge 3$ odd, then note that $V_{\frac{k-1}{2}}$ does not contain any vertex lying in $C_{\l+k}$ while $V_{\frac{k+1}{2}}$ does. The number of vertices in $C_{\l+k}$ that are contained in $V_{\frac{k+1}{2}}$ is the number of vertices at a distance $\frac{k-1}{2}$ from $v_k$ (along $2d-1$ directions), which is $(2d-1)^{\frac{k-1}{2}}$. This completes the proof.
\end{proof}

Now we go back and show that $\E[N^{(1)}(h)] < \infty$. Note that $\tilde{f'}^{(\l,\xi)}(v,k)$ vanishes in $C_s$ for all $s \leq \l-1$. Keeping this in mind, for each fixed $v \in W$, we consider the quantity
\begin{align*}
& \quad \sum_{\l=1}^\infty\sum_{k\in G}(2d-1)^{\l-1}\int|\tilde{f'}^{(\l,\xi)}(v,k)|^\alpha\gamma_\l(d\xi)\\
& = \sum_{\l=1}^\infty \sum_{s \ge \l} \sum_{k\in C_s}(2d-1)^{\l-1}\int|\tilde{f'}^{(\l,\xi)}(v,k)|^\alpha\gamma_\l(d\xi)\\
& =\sum_{s=1}^\infty \sum_{\l=1}^s (2d-1)^{\l-1}  \int\sum_{k\in C_s}|\tilde{f'}^{(\l,\xi)}(v,k)|^\alpha\gamma_\l(d\xi)\\
& = \sum_{s=1}^\infty \sum_{\l=1}^s (2d-1)^{\l-1} \frac{1}{|\Gamma_\l^{(s)}|} \sum_{\xi \in \Gamma_\l^{(s)}}\sum_{k\in C_s}|\tilde{f'}^{(\l,\xi)}(v,k)|^\alpha,\\
\intertext{which, by vitrue of Lemma~\ref{lemma:vertex_counting} and symmetry, reduces to}
& = \sum_{s=1}^\infty \sum_{\l=1}^s (2d-1)^{\l-1} \frac{1}{|\Gamma_\l^{(s)}|} \frac{|\Gamma_\l^{(s)}|(2d-1)^{\floor*{\frac{s-\l}{2}}}}{|C_s|}\sum_{k\in C_s}|f'(v,k)|^\alpha \\
& = \frac{1}{2d}\sum_{s=1}^\infty \left(\sum_{k\in C_s}|f'(v,k)|^\alpha \sum_{\l=1}^s(2d-1)^{-\left(s-\l-\floor*{\frac{s-\l}{2}}\right)}\right)\\
&\leq \frac{1}{2d}\sum_{s=1}^\infty  \sum_{k\in C_s}|f'(v,k)|^\alpha\sum_{r=0}^{\infty}(2d-1)^{-\left(r-\floor*{{r}/{2}}\right)}\leq  C \sum_{k\in G}|f'(v,k)|^\alpha
\end{align*}
for a positive constant $C$. Combining the above calculations with \eqref{form_of_N^1_h} finiteness of $\E[N^{(1)}(h)]$ follows because $f'\in \mathcal{L}^\alpha(W\times G,\nu\otimes \zeta)$. Hence we get that $N_*$ is Radon.

Finally, we need to prove that $N_n$ converges to $N_*$ weakly.  Inspired by \cite{Resnick:Samorodnitsky:2004}, one can guess that very few of the Poisson points $j_i$ in the definition of $X_t$ are likely to be large enough, so that $N_n$ has the same weak limit as
\begin{eqnarray*}\label{N2}
N_n^{(2)}:=\sum_{i=1}^\infty \sum_{k\in E_n} \delta_{(2d-1)^{-n/\alpha}j_if(v_i,k^{-1}u_i)}\,.
\end{eqnarray*}
The Laplace functional of $N_n^{(2)}$ can be computed easily. Using the scaling property of $\nu_\alpha$ once again we get, for $g\geq 0$ continuous with compact support, $\E e^{-N_n^{(2)}(g)}$ is equal to
\begin{eqnarray}\label{lapN2}
&\ \ \ & \exp\left\{ -\iint \frac{1}{(2d-1)^n}\sum_{u\in G}\left(1-e^{-\sum_{k\in E_n}g(xf(v,k^{-1}u)}\right)\nu_\alpha(dx)\nu(dv)\right\}
\end{eqnarray}
and by Theorem~5.2 of \cite{resnick:2007}, this needs to be shown to converge to ~\eqref{lap2} as $n \to \infty$. This is done in three steps. First we prove the convergence of $N_n^{(2)}$ when $f$ is compactly supported in the second variable. Then we remove the assumption of compact support. Finally, we verify $N_n$ to have the same limit by showing that its vague distance from $N_n^{(2)}$ converges to zero in probability. Since this proof closely follows that of Theorem~3.1 in  \cite{Resnick:Samorodnitsky:2004}, only the first step is detailed here and the rest are sketched.

Keeping the above discussion in mind, we assume first that the function $f$ in \eqref{ourma} is compactly supported in the second variable, i.e., for some positive integer $m$,
$$ f(v,u)=0 \mbox{ for all $(v,u)\in W\times G$ such that $u \notin E_m$.} $$
Fix $n>2m$ and $v \in W$. We examine the integrand in ~\eqref{lapN2} for each $u$. Since for any fixed $u \in G$,
\[
\bar{B}_u:=\{u^{-1}k: k\in E_n\}
\]
is the set of all vertices that are at a distance not greater than $n$ from $u^{-1}$, whenever  $u \notin E_{n+m}$, (then $u^{-1}$ is also not in $E_{n+m}$), due to the assumption on the support of $f$, $f(v,u^{-1}k)=0$ for any $k \in E_n$, and hence the integrand vanishes. So, we only examine the integrand with $u$ restricted to $E_{n+m}$.

Now for each $u \in E_{n-m}$, it is easy to see that $\bar{B}_u \supseteq E_m$. Also as $f$ vanishes outside $E_m$, so does $f'$, and whenever $f$ is zero, $g$ is also zero. Hence, for any $u \in E_{n-m}$,
$$
\left(1-e^{-\sum_{k\in E_n}g(xf(v,k^{-1}u))}\right)=\left(1-e^{-\sum_{k\in E_n}g(xf'(v,u^{-1}k))}\right)=\left(1-e^{-\sum_{k\in G}g(xf'(v,k))}\right),$$
which immediately turns the integral inside the exponent of \eqref{lapN2} into
\begin{eqnarray*}
&&\iint\frac{1}{(2d-1)^n}\sum_{u \in G}\left(1-e^{-\sum_{k\in E_n}g(xf'(v,u^{-1}k)}\right)\nu_\alpha(dx)\nu(dv)\\
&=& \iint\left[\sum_{\l=-(m-1)}^m \frac{1}{(2d-1)^n} \sum_{u \in C_{n+\l}}\left(1-e^{-\sum_{k\in E_n}g(xf'(v,u^{-1}k)}\right) \right. \\
&&\;\;\; + \left. \frac{1}{(2d-1)^n}|E_{n-m}|\left(1-e^{-\sum_{k\in G}g(xf'(v,k))}\right) \right]\nu_\alpha(dx)\nu(dv).
\end{eqnarray*}

Fix $\l \in \{-(m-1),-(m-2),\ldots ,0,1,\ldots,m\}$. Then fix a vertex $u \in C_{n+\l}$, and consider the subgraph $\bar{B}_u$. The collection of subgraphs as $u$ ranges over $C_{n+\l}$ is precisely the set $\Gamma_\l^{(m)}$, the collection of $m$-essentially distinct subgraphs in $\Gamma_\l$ as defined earlier. As there are $|\Gamma_\l^{(m)}|$ many $m$-essentially distinct subgraphs, by symmetry, each subgraph $\xi \in \Gamma_\l^{(m)}$ is repeated $|C_{n+\l}|/|\Gamma_\l^{(m)}|$ many times as $u$ runs over $C_{n+\l}$. Also if two $\l$-subgraphs $\xi$ differ only outside of $E_m$, then as $f'$ vanishes outside $E_m$, $\sum_{k\in G}g(x\tilde{f'}^{(\l,\xi)}(v,k))$ is same for them. Hence
\begin{eqnarray*}
&&\frac{1}{(2d-1)^n}\sum_{u \in C_{n+\l}}\left(1-e^{-\sum_{k\in E_n}g(xf'(v,u^{-1}k)}\right)\\
&=&\frac{1}{(2d-1)^n}\frac{|C_{n+\l}|}{|\Gamma_\l^{(m)}|}\sum_{\xi \in \Gamma_\l^{(m)}} \left(1-e^{-\sum_{k\in G}g(x\tilde{f'}^{(\l,\xi)}(v,k)}\right)\\
&=& 2d(2d-1)^{\l-1}\frac{1}{|\Gamma_\l^{(m)}|}\sum_{\xi \in \Gamma_\l^{(m)}} \left(1-e^{-\sum_{k\in G}g(x\tilde{f'}^{(\l,\xi)}(v,k)}\right)\\
&=& 2d(2d-1)^{\l-1}\int_{\Gamma_\l}\left(1-e^{-\sum_{k\in G}g(x\tilde{f'}^{(\l,\xi)}(v,k)}\right)d\gamma_\l(\xi).
\end{eqnarray*}
Here the last equality holds because $\gamma_\l$ restricted to $E_m$ is uniform on all possible subgraphs in $\Gamma_\l^{(m)}$.

Combining the above calculations with \eqref{easybound}, \eqref{trivial_bounds}, and using the fact that $g$ is continuous with compact support, it can be shown that \eqref{lapN2} converges to \eqref{lap2} as we let $n \to \infty$ and then $m \to \infty$. The justification of this truncation can be given using a convergence together argument in parallel to the one given in the proof of Theorem~3.1 in \cite{Resnick:Samorodnitsky:2004}. Finally, we verify that the vague metric between $N_n^{(2)}$ and $N_n$ converges to zero in probability following verbatim the corresponding portion in the aforementioned reference. This finishes the proof.
\end{proof}

\begin{proof}[Proof of Corollary~\ref{maxima}] Note that for each $s>0$, $[-\infty,-s)\cup(s,\infty]$ is a relatively compact set in $[-\infty,\infty]\setminus\{0\}$ and $N_*(\{s,-s,\infty,-\infty\})=0$ almost surely. Therefore using Theorems~3.1 and 3.2 of \cite{resnick:2007} and $N_n \Rightarrow N_*$, we have $$N_n([-\infty,-s)\cup(s,\infty]) \Rightarrow N_*([-\infty,-s)\cup(s,\infty])
\quad \mbox{ for any } s>0$$
In particular, for any $s>0$,
\begin{eqnarray*}
&&\P\left(\frac{1}{(2d-1)^{n/\alpha}}M_n \leq s\right)\\
&=&
\P\left(N_n([-\infty,-s)\cup(s,\infty])=0\right) \rightarrow \P(N_*([-\infty,-s)\cup(s,\infty])=0).
\end{eqnarray*}

To find $\P(N_*([-\infty,-s)\cup(s,\infty])=0)$, we use the Laplace functional of $N_*$. Note that as $N_*$ is a point process and hence $N_*([-\infty,-s)\cup(s,\infty])$ is a nonnegative integer valued random variable, by dominated convergence theorem,
\begin{eqnarray*}
&&\P(N_*([-\infty,-s)\cup(s,\infty])=0)\\
&=& \lim_{t\rightarrow \infty}\E \big(e^{-tN_*([-\infty,-s)\cup(s,\infty])}\big)\\
&=&\exp\left\{-\iint\sum_\l 2d(2d-1)^{\l-1}\int\lim_{t\rightarrow \infty}\left(1-e^{-\sum_{k}g(x\tilde{f'}^{(\l,\xi)}(v,k))}\right)d\gamma_\l d\nu_\alpha d\nu\right\},
\end{eqnarray*}
where $g=t\mathbf{1}_{[-\infty,-s)\cup(s,\infty]}$. Again,
$$\lim_{t\rightarrow \infty}\left(1-e^{-t\sum_{k\in G}\mathbf{1}_{[-\infty,-s)\cup(s,\infty]}(x\tilde{f'}^{(\l,\xi)}(v,k))}\right)= \mathbf{1}_{\left(|x|\geq {s}/{\sup_{k\in G} |\tilde{f'}^{(\l,\xi)}(v,k))|}\right)} ,$$
and
 $$\iint\mathbf{1}_{\left(|x|\geq {s}/{\sup_{k\in G} |\tilde{f'}^{(\l,\xi)}(v,k))|}\right)}\nu_\alpha(dx)\nu(dv)= \int\frac{2\left(\sup_{k\in G} |\tilde{f'}^{(l,\xi)}(v,k)|\right)^\alpha}{s^\alpha}d\nu.$$
Hence $\P(N_*([-\infty,-s)\cup(s,\infty])=0)$ is given by
\begin{eqnarray*}
&&\exp\left\{-\frac{\sum_{l=-\infty}^{\infty} (2d)(2d-1)^{l-1}\int_W \int {2\left(\sup_{k} |\tilde{f'}^{(l,\xi)}(v,k)|\right)^\alpha}\gamma_\l(d\xi)\nu(dv)}{s^\alpha} \right\}\\
&=& \exp\left\{-\frac{K_X^\alpha}{s^\alpha}\right\}.
\end{eqnarray*}
Finiteness of $K_X$ can be established following the argument that was used to prove $N_*$ is Radon.
\end{proof}

\begin{proof}[Proof of Corollary~\ref{simplethm}] It has already been verified that the Laplace functional in \eqref{lap2} reduces to that in \eqref{lap} under the assumption of level symmetry. We just need to show that \eqref{lap} is indeed the Laplace functional of $N_*$ defined in ~\eqref{Nstar}. This can be done exactly as in the first part of the proof of Theorem~\ref{maintheorem}. The details are skipped.
\end{proof}

\section*{Acknowledgements} A significant portion of this work was carried out in the master's dissertation of the first author at Indian Statistical Institute. He would like to thank the institute and the professors there. The authors are grateful to Gennady Samorodnitsky for a number of useful discussions, to Kingshook Biswas and Mahan Mj for drawing their attention to the group action considered in Example~\ref{eg2}, and to Mrinal Kanti Das for suggesting the reference~\cite{aluffi:2009}. The authors would also like to thank the anonymous referees for their careful reading and detailed comments which significantly improved the paper.

\end{document}